\documentclass[12pt]{article}

\usepackage[utf8]{inputenc}
\usepackage{graphicx}
\usepackage{hyperref}
\usepackage{geometry}
\geometry{margin=1in}

\usepackage{amsmath,amsthm,amssymb,mathtools}
\usepackage{bm}
\usepackage{natbib}

\theoremstyle{plain}
\newtheorem{theorem}{Theorem}
\newtheorem{lemma}{Lemma}
\theoremstyle{definition}
\newtheorem{example}{Example}
\newtheorem{definition}{Definition}
\newtheorem{remark}{Remark}
\newtheorem{corollary}{Corollary}

\newcommand{\E}{\mathbb{E}}
\newcommand{\N}{\mathbb{N}}
\newcommand{\R}{\mathbb{R}}

\title{Neighbour–count dependent thinning of Poisson processes: correlation structure and Poisson approximation}
\author{Kateryna Hlyniana}
\date{\today}

\begin{document}
\maketitle

\begin{abstract}
We study a local thinning $T_r$ that retains a point with probability $p(n_r)$, where $n_r$ counts neighbors within radius $r$. For Poisson input with spatially varying intensity, we obtain an exact intensity via a Poisson--mixture formula and a small-radius expansion. For homogeneous input we give a closed-form pair correlation based on the three-region overlap . First-order contact-scale asymptotics identify how the values $p(0),p(1),p(2)$ govern inhibition or clustering. On bounded windows we approximate $T_r(X)$ by a Poisson process with matched intensity through three routes: (i) a direct coupling to an independent thinning giving a total-variation bound; (ii) a Laplace-functional error supported at distances $\le 2r$ and of order $|W|\,\lambda^2 r^d$; and (iii) a Stein bound in the Barbour--Brown $d_2$ metric controlled by $\int_{\|h\|\le 2r} |g(h)-1|\,dh$. 
\end{abstract}

\section{Introduction}

Thinning transformations are a workhorse of spatial point process modelling: they generate tractable families with tunable interaction while preserving basic invariances (shift/rotation) and allowing explicit moment calculations; see, e.g.,   \cite{LastPenrose2017,BaddeleyRubakTurner2015,ChiuStoyanKendallMecke2013}. The classical Mat\'ern type~I/II constructions implement inhibition by retaining only isolated points or local winners of random marks, respectively, and have been generalized in multiple directions to widen the range of attainable first- and second-order structures \citep{TeichmannBallaniBoogaart2013,AndersenHahn2016}. 

This paper studies a simple, $r$-local transformation that \emph{counts} neighbours before deciding whether to keep a point. For a radius $r>0$ and a measurable rule $p:\mathbb N_0\to[0,1]$. Given a point configuration $X$ and $x\in X$, we compute $n_r(x;X)$, the number of other points within distance $r$ of $x$, and retain $x$ with probability $p(n_r(x;X))$ using independent Uniform$(0,1)$ marks (Definition~\ref{def:Ta}). This neighbour-count dependent thinning $T_r$ is equivariant under Euclidean isometries (Remark~\ref{rem:equivariance}) and nests several familiar models: Mat\'ern type~I ($p(n)=\mathbf 1\{n=0\}$) \cite{Matern1986}, geometric soft-core rules ($p(n)=q\,s^n$), and, importantly,  {count-favouring} choices with $p(1)>p(0)$ that can induce attraction at contact scales (Examples~\ref{ex:maternI}–\ref{ex:count-favouring}). Because decisions at $x$ depend only on $X\cap B(x,r)$, all dependence created by $T_r$ lives at distance~$\le 2r$, a feature we will exploit repeatedly.

\paragraph{Summary of results}
We now describe the main statements proved in the paper.
\begin{enumerate}

\item \emph{Intensity for Poisson and Cox input.} For inhomogeneous Poisson input with density $\lambda(\cdot)$, we show
\[
\rho_{T_r}(x)=\lambda(x)\,\E \left[p\big(\mathrm{Poisson}(\mu_x(r))\big)\right],\qquad \mu_x(r)=\int_{B(x,r)}\lambda(y)\,dy,
\]
with an elementary small-$r$ expansion in finite differences of $p$ (Lemma~\ref{lem:intensity-IPPP} and Corollary~\ref{cor:finite-diff-expansion}). The Cox case is handled by conditioning on the random intensity and then averaging (Lemma~\ref{lem:intensity-Cox}, Corollary~\ref{cor:cox-fd-expansion}); see also   \cite{MollerWaagepetersen2003, DaleyVereJones2008}.
\item \emph{Exact pair correlation for Poisson input.} For homogeneous Poisson input, we derive an exact representation of the pair–correlation function $g_{T_r}$ in terms of three independent Poisson counts on the overlap diagram of two $r$-balls (Theorem~\ref{thm:g-exact-PPP}). In particular $g_{T_r}(h)=1$ for $\|h\|>2r$, reflecting finite-range dependence. First-order contact-scale asymptotics in $r\to 0$ make explicit how the first two values of $p$ (namely $p(0),p(1),p(2)$) govern repulsion/attraction at $\|h\|\le 2r$ (Lemmas~\ref{lem:g-first-order}–\ref{lem:g-first-order-p0zero}), clarifying when count-favouring rules yield clustering.
\item \emph{Poisson approximation by three complementary routes.} On bounded windows, we quantify closeness to a Poisson process with matched intensity via:
\begin{itemize}
\item a \emph{direct coupling} with an independent thinning, giving a clean total-variation bound (Theorem~\ref{thm:coupling-TV}), reminiscent of   \cite{Lindvall1992,Thorisson2000};
\item a \emph{Laplace-functional} control that depends only on the interaction range $2r$ (Theorem~\ref{thm:stein-main});
\item a \emph{Stein} bound in the Barbour–Brown $d_2$ metric that reduces to the integrated short-range deviation $\int_{\|h\|\le 2r}|g_{T_r}(h)-1|\,dh$ (Theorem~\ref{thm:stein-main}), in the spirit of   \cite{BarbourBrown1992} and the distance estimates for dependent thinnings in   \cite{Schuhmacher2009Bernoulli,Schuhmacher2009}.
\end{itemize}
For small $r$, the Stein bound yields explicit rates driven by $p(0),p(1),p(2)$ (Theorem~\ref{prop:stein-smallr}); for moderate $r$, a non-asymptotic bound under a discrete Lipschitz condition on $p$ follows from a Poisson Poincar\'e inequality \citep{LastPenrose2011PTRF} (Theorem~\ref{thm:moderate-r}).
\end{enumerate}

The rest of the paper is organized as follows. 
Section~\ref{sec:Ta} introduces $T_r$, basic properties, and examples. Section~\ref{subsec:intensity} gives intensity formulae for Poisson and Cox inputs, including small-$r$ expansions. Section~\ref{subsec:pair-corr-Ta} derives the exact pair–correlation for Poisson input and its contact-scale asymptotics, with examples illustrating attraction versus repulsion. Section~\ref{sec:poisson-approx-summary} develops the three Poisson-approximation routes (coupling, Laplace functional, Stein), including sharp small- and moderate-$r$ regimes. 

\section{Neighbour-count dependent thinning}
\label{sec:Ta}

\subsection{Definition and examples}\label{subsec:def-Ta}
We work throughout on $\R^d$, $d\ge 1$. Let $\mathbf N$ denote the canonical space of all locally finite counting measures on $\R^d$, equipped with the vague $\sigma$–algebra \cite[Ch.~2]{LastPenrose2017}. A point process will be understood as an $\mathbf N$-valued random element $X:(\Omega,\mathcal F,\mathbb P)\to(\mathbf N,\mathcal N)$. Realizations of $X$ are identified with locally finite point configurations in $\R^d$.
For a Borel set $A$, $X(A)$ denotes the count in $A$ and $\alpha(X; B)=\mathbb{E}[X(B)]$ its intensity measure. If an intensity measure is absolutely continuous with respect to the Lebesgue measure, we denote by $\rho_X(\cdot)$ its density. 
We write $\mathrm{PPP}(\Lambda)$ for a   Poisson point process $X$ on $\mathbb{R}^d$ with intensity  measure $\Lambda$. 
That is, for any disjoint bounded Borel sets $A_1,\dots,A_k$ the random variables $X(A_1),\dots,X(A_k)$ are independent with 
\[
X(A_i)\sim \mathrm{Poisson}\bigl(\Lambda(A_i)\bigr),\qquad i=1,\dots,k.
\]
In the case when the measure $\Lambda$ has density $\lambda$ with respect to the Lebesgue measure, we write $\mathrm{PPP}(\lambda(\cdot)).$
Throughout, we restrict attention to \emph{simple} point processes, meaning that
\[
\mathbb{P}\big(X(\{x\}) \in \{0,1\}\big)=1 \quad \text{for all } x\in\R^d,
\]
so that almost surely no location in $\R^d$ carries more than one point.

Throughout, $B(x,r)$ denotes the closed Euclidean ball of radius $r>0$ centred at $x\in\R^d$, and
\[
v_d := |B(0,1)|
\]
denotes the $d$-dimensional Lebesgue volume of the unit ball. We write $|A|$ for the Lebesgue measure of a Borel set $A\subset\R^d$.

For a radius $r>0$ and a measurable retention rule $p:\mathbb{N}_0\to[0,1]$. For $x\in X$ put
\[
n_r(x;X) \;:=\; \#\big((X \cap B(x,r))\setminus\{x\}\big),
\]
the number of neighbours of $x$ in the closed ball $B(x,r)$.
All random objects below are defined on a common probability space $(\Omega,\mathcal F,\mathbb P)$. 
\begin{definition}[Neighbour-count dependent thinning]
\label{def:Ta}
The thinning $T_r$ of $X$ is defined via \emph{independent marking}: let $\{U_x\}_{x\in X}$ be a family of mutually independent $\mathrm{Uniform}(0,1)$ random variables, independent of $X$. Retain a point $x\in X$ iff
\[
U_x \le p\big(n_r(x;X)\big).
\]
Equivalently, conditionally on $X=\mathbf{x}$, the points of $\mathbf{x}$ are retained \emph{independently} with probabilities $p\big(n_r(x;\mathbf{x})\big)$. We denote by $T_r(X)$ the resulting (simple) point process.
\end{definition}

The marking construction makes measurability immediate and shows that dependence in $ T_r(X)$ arises only through $X\mapsto p(n_r(\cdot;X))$; the retention decisions are conditionally independent given $X$ (standard in independent thinning/marking; see \citealp[Ch.~5]{LastPenrose2017}). The rule is $r$-local, i.e. the decision at $x$ depends only on $X\cap B(x,r)$.

\begin{lemma}[Joint measurability of the neighbour count]\label{lem:meas-nr}
Fix $r>0$. On $(\mathbb{R}^d\times\mathbf N, \mathcal B \otimes \mathcal N),$  the map
\[
(x,\mathbf Y)\ \longmapsto\ n_r(x;\mathbf Y)
=\#\big((\mathbf Y\cap B(x,r))\setminus\{x\}\big)
\]
is measurable. Consequently, for any measurable $p:\mathbb N_0\to[0,1]$, the map
\((x,\mathbf Y)\mapsto p \big(n_r(x;\mathbf Y)\big)\) is measurable.
\end{lemma}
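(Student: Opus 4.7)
My plan is to rewrite $n_r(x;\mathbf Y)$ as a point-process integral of a jointly Borel integrand against $\mathbf Y$, and then to show that such integrals are jointly measurable in $(x,\mathbf Y)$ via a monotone-class argument. The final statement about $p$ then follows by composition with a Borel $p:\mathbb N_0\to[0,1]$, which is automatic since $\mathbb N_0$ is countable and discrete.

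The first step is the representation
\[
n_r(x;\mathbf Y) \;=\; \int_{\mathbb R^d} f(x,y)\,\mathbf Y(dy), \qquad f(x,y)\;:=\;\mathbf 1\{\|y-x\|\le r\}\,\mathbf 1\{y\ne x\}.
\]
The integrand $f$ is the indicator of $\{(x,y):\|y-x\|\le r\}\setminus\Delta\subset\mathbb R^d\times\mathbb R^d$ (with $\Delta$ the diagonal), hence is $\mathcal B(\mathbb R^d)\otimes\mathcal B(\mathbb R^d)$-measurable. For any locally finite $\mathbf Y$ the integral is finite because $f(x,\cdot)$ is supported in the bounded ball $B(x,r)$.

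The main step is to prove the auxiliary claim that \emph{for every nonnegative, jointly Borel $f:\mathbb R^d\times\mathbb R^d\to[0,\infty)$ whose $y$-section $f(x,\cdot)$ has support in a bounded set that stays bounded as $x$ ranges over any bounded window, the map $\Phi_f:(x,\mathbf Y)\mapsto\int f(x,y)\,\mathbf Y(dy)$ is $\mathcal B(\mathbb R^d)\otimes\mathcal N$-measurable.} I would establish this by the standard functional monotone-class device: for product indicators $f=\mathbf 1_A\otimes\mathbf 1_B$ with $B$ bounded Borel, $\Phi_f(x,\mathbf Y)=\mathbf 1_A(x)\,\mathbf Y(B)$ is a product of a Borel function of $x$ and an $\mathcal N$-measurable function of $\mathbf Y$ (the latter by the very definition of the vague $\sigma$-algebra), hence jointly measurable. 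The class of bounded $f\ge 0$ for which $\Phi_f$ is jointly measurable is closed under nonnegative linear combinations and bounded monotone increasing limits (by monotone convergence applied fibrewise in $\mathbf Y$), so it contains all bounded jointly Borel $f$ with $y$-support in a fixed bounded window; unbounded $f\ge 0$ follows by truncation. To pass from a fixed $y$-window to all of $\mathbb R^d$, I would restrict $x$ to $\|x\|\le R$ (so that $\bigcup_{\|x\|\le R}B(x,r)$ is bounded), apply the claim to $f\cdot\mathbf 1_{\|x\|\le R}$, and take the countable union over $R\in\mathbb N$.

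Applying the claim to the specific $f$ of the first step yields joint measurability of $n_r$, and then composition with $p$ gives the second assertion. I expect the only mildly technical point to be the monotone-class step, and the only genuine ``obstacle'' --- really a minor bookkeeping matter --- is ensuring that the base-case integrals remain finite, handled by localizing $x$ to bounded windows and exhausting $\mathbb R^d$ countably.
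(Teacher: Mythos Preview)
Your proposal is correct and follows essentially the same approach as the paper: both represent $n_r(x;\mathbf Y)$ as an integral $\int g(x,y)\,\mathbf Y(dy)$ of a jointly Borel indicator (the paper splits it as $\mathbf 1_{A_r}-\mathbf 1_{\Delta}$, you use $\mathbf 1_{A_r\setminus\Delta}$ directly), and then invoke the joint measurability of such kernel integrals in $(x,\mathbf Y)$. The only difference is that the paper cites this last fact from Last--Penrose, whereas you sketch the underlying monotone-class argument with localization to bounded windows; this extra detail is sound but not required.
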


\begin{proof}
Let $A_r:=\{(x,y)\in\mathbb{R}^d\times\mathbb{R}^d:\ \|y-x\|\le r\}$ and $\Delta:=\{(x,y):y=x\}$.
Both $A_r$ and $\Delta$ are Borel. For $(x,\mathbf Y)\in\mathbb{R}^d\times\mathbf N$,
\[
n_r(x;\mathbf Y)
=\int \mathbf 1_{A_r}(x,y)\,\mathbf Y(dy)\ -\ \int \mathbf 1_{\Delta}(x,y)\,\mathbf Y(dy).
\]
It is standard that if $g:\mathbb{R}^d\times\mathbb{R}^d\to\mathbb{R}_+$ is measurable and bounded,
then $(x,\mathbf Y)\mapsto\int g(x,y)\,\mathbf Y(dy)$ is measurable on
$(\mathbb{R}^d\times\mathbf N, \mathcal B \otimes \mathcal N),$  see, e.g., \cite[Ch.~2, §2.1]{LastPenrose2017}. Applying this with $g=\mathbf 1_{A_r}$ and
$g=\mathbf 1_{\Delta}$ gives the measurability of $(x,\mathbf Y)\mapsto n_r(x;\mathbf Y)$.
Finally, composition with measurable $p$ yields the measurability of $p(n_r(x;\mathbf Y))$.
\end{proof}

\begin{remark}[Equivariance and stationarity]\label{rem:equivariance}
The map $T_r$ is equivariant under Euclidean isometries: for any isometry $\phi$, 
$T_r(\phi\mathbf{x})=\phi\,T_r(\mathbf{x})$. Hence if $X$ is stationary (respectively, isotropic), 
then so is $T_r(X)$. 
\end{remark}

\begin{example}[Mat\'ern type~I]
\label{ex:maternI}
With $p(n)=\mathbf{1}\{n=0\}$, $ T_r$ retains precisely the \emph{isolated} points of $X$, yielding the classical Mat\'ern type~I hard-core thinning (e.g., \cite{Matern1986}).
\end{example}

\begin{example}[Geometric soft-core family]
\label{ex:geom-softcore}
Fix $q\in[0,1]$ and $s\in[0,1]$, and set $p(n)=q\,s^{\,n}$. Then an $x$ with $n$ neighbours within $r$ is kept with probability decaying geometrically in $n$. This produces inhibition when $s<1$ (recovering Mat\'ern~I at $q=1$, $s=0$), and reduces to independent thinning when $s=1$. 
\end{example}

\begin{example}[Count-favouring retention]
\label{ex:count-favouring}
If $p$ is non-decreasing with $p(1)>p(0)$, then crowded locations are kept with higher probability.
For small $r$, one expects that the correlation function $g_{T_r}(h)>1$ on a contact scale (clustering), in contrast to Mat\'ern~I. 
 See Section~\ref{subsec:pair-corr-Ta} for details.
\end{example}

\subsection{Intensity formulas (Poisson and Cox input)}
\label{subsec:intensity}
In this subsection we identify the first-order properties of the thinned process $T_r(X)$. 
We first treat the case where $X$ is a (possibly inhomogeneous) Poisson point process and obtain an explicit expression for the intensity measure of $T_r(X)$, together with a small-$r$ expansion in terms of finite differences of $p$. 
We then extend the formula to Cox input by conditioning on the random directing measure and averaging. 
Throughout, $\alpha_Y$ denotes the intensity measure of a point process $Y$, and when $\alpha_Y$ is absolutely continuous with respect to Lebesgue measure, we write $\rho_Y(\cdot)$ for its density.

\begin{lemma}[Intensity for inhomogeneous Poisson input]\label{lem:intensity-IPPP}
Let $X\sim\mathrm{PPP}(\lambda(\cdot))$ on $\mathbb{R}^d$, where $\lambda:\mathbb{R}^d\to[0,\infty)$ is locally integrable.
Fix $r>0$ and a measurable $p:\mathbb{N}_0\to[0,1]$ and denote $Y:=T_r(X)$.
For $x\in\mathbb{R}^d$ put $\mu_x(r):=\int_{B(x,r)}\lambda(y)\,dy$ and define
$m_p(t):=\mathbb{E}\big[p(N_t)\big],$ where $N_t \sim \mathrm{Poisson}(t).$
Then for every bounded Borel $B\subset\mathbb{R}^d$,
\[
\alpha_{Y}(B)\;=\;\int_B \lambda(x)\,m_p \big(\mu_x(r)\big)\,dx.
\]
In particular, $T_r(X)$ has  intensity density
$$\rho_{Y}(x)=\lambda(x)\,m_p \big(\mu_x(r)\big)=e^{-\mu_x(r)}\sum_{n=0}^\infty p(n)\frac{\mu_x(r)^n}{n!}.$$
\end{lemma}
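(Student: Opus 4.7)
The plan is to reduce the claim to a direct application of the Slivnyak--Mecke (reduced Campbell) formula for the Poisson process. Writing $Y(B)$ as a marked sum over $X$ turns the thinning intensity into the Palm expectation of the retention probability, and the Poisson character of $X$ then collapses that expectation into a function of $\mu_x(r)$ alone.

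Concretely, I would first expand
\[
Y(B)\;=\;\sum_{x\in X}\mathbf 1_B(x)\,\mathbf 1\{U_x\le p(n_r(x;X))\},
\]
take expectation, and integrate out the independent uniform marks by Fubini (justified since the integrand is bounded and jointly measurable via Lemma~\ref{lem:meas-nr}). This yields
\[
\alpha_Y(B)\;=\;\mathbb E\!\left[\sum_{x\in X}\mathbf 1_B(x)\,p(n_r(x;X))\right].
\]
Because $n_r(x;X)=(X\setminus\{x\})(B(x,r))$, the summand has the form $f(x,X\setminus\{x\})$ for the measurable kernel $f(x,\mathbf Y):=\mathbf 1_B(x)\,p(\mathbf Y(B(x,r)))$. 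Applying Slivnyak--Mecke for $X\sim\mathrm{PPP}(\lambda(\cdot))$ (see, e.g., \cite[Ch.~4]{LastPenrose2017}) converts this to $\int_B \mathbb E[p(X(B(x,r)))]\,\lambda(x)\,dx$. Since $X(B(x,r))\sim\mathrm{Poisson}(\mu_x(r))$, the inner expectation equals $m_p(\mu_x(r))$ by definition of $m_p$, which gives the claimed intensity measure. The density form then follows by writing the Poisson expectation as the explicit series $e^{-\mu_x(r)}\sum_{n\ge 0} p(n)\,\mu_x(r)^n/n!$, with no convergence issue since $p\le 1$.

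The only substantive ingredient is invoking Slivnyak--Mecke for this particular kernel, which requires joint measurability and non-negativity---both supplied by Lemma~\ref{lem:meas-nr} together with $p\in[0,1]$---while local integrability of $\lambda$ guarantees $\mu_x(r)<\infty$ at every $x$. I do not anticipate a real obstacle here: the argument is essentially bookkeeping around the Mecke identity, and the independence of the uniform marks from $X$ means no conditioning on the mark structure enters. The finite-difference small-$r$ expansion (Corollary~\ref{cor:finite-diff-expansion}) and the Cox extension (Lemma~\ref{lem:intensity-Cox}) will later plug into this same formula, respectively by Taylor-expanding $m_p$ and by averaging the Poisson identity over the directing random measure.
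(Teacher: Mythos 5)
Your proposal is correct and follows essentially the same route as the paper: integrate out the uniform marks given $X$, apply the Mecke identity to the kernel $\mathbf 1_B(x)\,p(n_r(x;\cdot))$, and use that the relevant neighbour count is $\mathrm{Poisson}(\mu_x(r))$. The only cosmetic difference is that you invoke the reduced form $\mathbb E[\sum_{x\in X} f(x,X\setminus\{x\})]=\int\lambda(x)\,\mathbb E[f(x,X)]\,dx$, while the paper uses the equivalent ``add a point'' form with $n_r(x;X\cup\{x\})$; both rest on the same measurability input (Lemma~\ref{lem:meas-nr}) and the a.s.\ simplicity of $X$.
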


\begin{proof}
Write $\eta(x;\mathbf{Y}):=p\big(n_r(x;\mathbf{Y})\big)\in[0,1]$, where
$n_r(x;\mathbf{Y})=\#\big((\mathbf{Y}\cap B(x,r))\setminus\{x\}\big)$.
By definition, conditionally on $X$ the retention indicators are independent with
$\mathbb{E} \left[\mathbf{1}\{x\in T_r(X)\}\mid X\right]=\eta(x;X).$

Fix a bounded Borel $B$. Then
\[
\alpha_{Y}(B)
=\mathbb{E} \left[\sum_{x\in T_r(X)}\mathbf{1}\{x\in B\}\right]
=\mathbb{E} \left[\sum_{x\in X}\mathbf{1}\{x\in B\}\,\eta(x;X)\right].
\]
By Lemma~\ref{lem:meas-nr}, the map $(x,\mathbf Y)\mapsto \eta(x;\mathbf Y)=p(n_r(x;\mathbf Y))$ 
is measurable on $\mathbb{R}^d\times\mathbf N$, so we may apply the inhomogeneous 
Campbell–Mecke formula with 
$$f(x,\mathbf Y)=\mathbf{1}\{x\in B\}\,p(n_r(x;\mathbf Y)).$$
We get:
\[
\mathbb{E} \left[\sum_{x\in X}\mathbf{1}\{x\in B\}\,\eta(x;X)\right]
=\int_B \lambda(x)\,\mathbb{E} \left[p \big(n_r(x;X\cup\{x\})\big)\right]dx.
\]
From the definition of $n_r,$
\[
n_r\big(x;X\cup\{x\}\big)=\#\big((X\cup\{x\} \cap B(x,r))\setminus\{x\}\big)
\ \stackrel{d}{=}\ \mathrm{Poisson} \big(\mu_x(r)\big),
\]
where $\mu_x(r)=\int_{B(x,r)}\lambda(y)\,dy$.
Therefore
\[
\mathbb{E} \left[p \big(n_r(x;X\cup\{x\})\big)\right]
=\mathbb{E} \left[p \big(\mathrm{Poisson}(\mu_x(r))\big)\right]
= m_p \big(\mu_x(r)\big).
\]
Substituting back yields
\[
\alpha_{Y}(B)\;=\;\int_B \lambda(x)\, m_p \big(\mu_x(r)\big)\,dx,
\]
which identifies both the intensity measure on $B$ and (by absolute continuity with respect to Lebesgue measure) the intensity density
$\rho_{T_r}(x)=\lambda(x)\, m_p(\mu_x(r))$. This concludes the proof.
\end{proof}

As a first consequence of Lemma~\ref{lem:intensity-IPPP}, we record an explicit small-$r$ expansion of the resulting intensity. In particular, the dependence on the thinning rule $p$ enters only through its finite differences at $0$, and the error can be controlled locally in terms of $\mu_x(r)=\int_{B(x,r)}\lambda(y),dy$.
\begin{corollary}[Small--$\mu_x(r)$ expansion with local remainder control]
\label{cor:finite-diff-expansion}
In the setting of Lemma~\ref{lem:intensity-IPPP}, fix $x\in\mathbb R^d$ and $K\ge0,  $  $R>0$  and put $Y:= T_r(X)$ and
\[
\lambda^\ast_{x,R}:=\operatorname*{ess\,sup}_{y\in B(x,R)}\lambda(y)\in[0,\infty]
\]
Then there exists a constant $C_K=C_K(R,p)\in(0,\infty)$ depending only on $K$, $R$, and $p$ such that, for all $0<r\le R$,
\[
\rho_{Y}(x)
=\lambda(x)\sum_{k=0}^{K}\frac{\mu_x(r)^k}{k!}\,\Delta^{k}p(0)
\;+\; \mathcal{E}_{K+1}(x,r),
\qquad
|\mathcal{E}_{K+1}(x,r)| \;\le\; C_K\,\lambda^\ast_{x,R}\,\mu_x(r)^{K+1}.
\]
Here $\mu_x(r)=\int_{B(x,r)}\lambda(y)\,dy$, $\Delta^{0}p(0)=p(0)$,
$\Delta p(0)=p(1)-p(0)$, and in general
\[
\Delta^{k}p(0)=\sum_{j=0}^{k}(-1)^j \binom{k}{j}\,p(k-j).
\]
In particular, the first two terms are
\[
\rho_{Y}(x)
=\lambda(x)\Big\{\,p(0)\;+\;\mu_x(r)\big(p(1)-p(0)\big)\Big\}
+\mathcal{E}_{2}(x,r),
\qquad |\mathcal{E}_{2}(x,r)|\le C_1\,\lambda^\ast_{x,R}\,\mu_x(r)^2 .
\]
\end{corollary}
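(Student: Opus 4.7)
The plan is to combine the exact formula $\rho_{Y}(x)=\lambda(x)\,m_p(\mu_x(r))$ from Lemma~\ref{lem:intensity-IPPP} with a Taylor expansion of the scalar function $m_p(t)=\E[p(N_t)]$ at $t=0$. Everything reduces to two facts: (a) $m_p$ is smooth on $[0,\infty)$ with $m_p^{(k)}(0)=\Delta^{k}p(0)$, and (b) the derivatives are uniformly bounded, so the Lagrange remainder at order $K{+}1$ contributes only a constant depending on $K$.

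First I would establish the differential recursion $m_p'(t)=m_{\Delta p}(t)$, where $(\Delta p)(n):=p(n+1)-p(n)$. This follows by differentiating under the sum in
\[
m_p(t)=e^{-t}\sum_{n=0}^{\infty}p(n)\frac{t^n}{n!},
\]
which is legitimate since $|p|\le 1$ makes the series uniformly convergent on compact subsets of $[0,\infty)$: reindexing the derivative recovers $\E[p(N_t{+}1)-p(N_t)]=m_{\Delta p}(t)$. Iterating gives $m_p^{(k)}(t)=m_{\Delta^{k}p}(t)$, and evaluating at $t=0$ (where $N_0\equiv 0$) yields $m_p^{(k)}(0)=\Delta^{k}p(0)$. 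Since $\|p\|_\infty\le 1$, the triangle inequality applied to the binomial expansion of $\Delta^{k}p$ gives $\|\Delta^{k}p\|_\infty\le 2^{k}$, hence $|m_p^{(k)}(t)|\le 2^{k}$ uniformly in $t\ge 0$.

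Next I would invoke Taylor's theorem with integral remainder,
\[
m_p(t)=\sum_{k=0}^{K}\frac{t^k}{k!}\,\Delta^{k}p(0)+\int_{0}^{t}\frac{(t-s)^{K}}{K!}\,m_p^{(K+1)}(s)\,ds,
\]
and bound the integral by $\tfrac{2^{K+1}}{(K+1)!}\,t^{K+1}$ using step~(b). Substituting $t=\mu_x(r)$ and multiplying by $\lambda(x)$ produces the announced expansion with
\[
\mathcal{E}_{K+1}(x,r)=\lambda(x)\int_{0}^{\mu_x(r)}\frac{(\mu_x(r)-s)^{K}}{K!}\,m_p^{(K+1)}(s)\,ds,
\]
so $|\mathcal{E}_{K+1}(x,r)|\le \lambda(x)\,\tfrac{2^{K+1}}{(K+1)!}\,\mu_x(r)^{K+1}\le \lambda^\ast_{x,R}\,C_K\,\mu_x(r)^{K+1}$ with $C_K:=2^{K+1}/(K+1)!$. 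The $K=1$ display then follows by reading off $\Delta^{0}p(0)=p(0)$ and $\Delta p(0)=p(1)-p(0)$. The dependence of $C_K$ on $R$ permitted by the statement is not actually needed here; it could be exploited to sharpen the constant by restricting $t$ to $[0,\lambda^\ast_{x,R}v_d R^d]$, but the uniform bound above is already sufficient.

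The only real obstacle is the bookkeeping for (a): identifying the Taylor coefficients as the forward differences $\Delta^{k}p(0)$. I would do this through the recursion $m_p'=m_{\Delta p}$ rather than rearranging the double series $e^{-t}\sum p(n)t^n/n!$ by Fubini, since the recursive route avoids any discussion of the order of summation and makes the bound $|m_p^{(k)}|\le 2^{k}$ transparent.
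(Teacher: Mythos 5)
Your proposal is correct, and it reaches the same target as the paper — the scalar expansion of $m_p(t)=\E[p(N_t)]$ at $t=0$ with coefficients $\Delta^k p(0)$ — but by a different mechanism. The paper multiplies the series $e^{-t}=\sum_j(-1)^jt^j/j!$ with $\sum_n p(n)t^n/n!$, rearranges the Cauchy product to get $m_p(t)=\sum_{k\ge0}\Delta^kp(0)\,t^k/k!$, and then bounds the truncated tail crudely via $\sum_{k\ge K+1}(2t)^k/k!\le \tfrac{(2t)^{K+1}}{(K+1)!}e^{2t}$ with $t\le\mu_x(R)$, which leaves the factor $e^{2\mu_x(R)}$ inside $C_K$. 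You instead prove the recursion $m_p'=m_{\Delta p}$ (legitimate term-by-term differentiation of an entire power series), iterate to $m_p^{(k)}=m_{\Delta^k p}$ with the uniform bound $|m_p^{(k)}(t)|\le\|\Delta^kp\|_\infty\le 2^k$ for all $t\ge0$, and conclude by Taylor's theorem with integral remainder. This buys you a slightly cleaner and in fact stronger remainder constant, $C_K=2^{K+1}/(K+1)!$, which is genuinely independent of $x$, $r$ and $R$ — whereas the paper's $C_K$ carries $e^{2\mu_x(R)}$ and so, taken literally, depends on $x$ (and on $\lambda$) rather than only on $(K,R,p)$ as the statement asserts; your route quietly repairs that blemish. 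The only step both arguments share as a mild imprecision is the passage $\lambda(x)\le\lambda^\ast_{x,R}$, which compares a pointwise value of the density with an essential supremum; since $\lambda$ is anyway only defined a.e., this is harmless and identical to the paper's usage, so it is not a gap attributable to you.
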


\begin{proof}
Set $t:=\mu_x(r)$. By Lemma~\ref{lem:intensity-IPPP},
\[
\rho_{Y}(x)=\lambda(x)\, m_p(t)
=\lambda(x)\,e^{-t}\sum_{n=0}^{\infty} p(n)\frac{t^n}{n!}.
\]
Expanding $e^{-t}=\sum_{j=0}^{\infty}(-1)^j t^j/j!$ and multiplying the series yields the finite–difference identity
\[
 m_p(t)=\sum_{k=0}^{\infty}\frac{\Delta^{k}p(0)}{k!}\,t^k,
\qquad
\Delta^{k}p(0):=\sum_{j=0}^{k}(-1)^j\binom{k}{j}p(k-j).
\]
Since $0\le p(n)\le1$, we have $|\Delta^{k}p(0)|\le\sum_{j=0}^k\binom{k}{j}=2^k$. Thus, for any fixed $R>0$ and $0<r\le R$ (so $t\le \mu_x(R)$),
\[
\begin{aligned}
\big|\rho_{Y}(x) - \lambda(x)\sum_{k=0}^{K}\tfrac{\Delta^{k}p(0)}{k!}\,t^k \big|
&= \lambda(x)\,\Big|\sum_{k\ge K+1}\tfrac{\Delta^{k}p(0)}{k!}\,t^k\Big|
\;\le\; \lambda(x)\sum_{k\ge K+1}\frac{(2t)^k}{k!}\\
&\le\; \lambda^\ast_{x,R}\,\frac{(2t)^{K+1}}{(K+1)!}\,e^{2\mu_x(R)}
\;=\; C_K\,\lambda^\ast_{x,R}\,\mu_x(r)^{K+1},
\end{aligned}
\]
with $C_K:=2^{K+1}e^{2\mu_x(R)}/(K{+}1)!$. This is the claimed bound and the displayed first two terms follow by reading off $k=0,1$.
\end{proof}

\begin{remark}[On local boundedness around $x$]
If $\lambda\in L^\infty_{\mathrm{loc}}$ near $x$ (e.g. $\lambda$ is continuous at $x$), then $\lambda^\ast_{x,R}<\infty$ for some $R>0$ and the remainder satisfies
\(
|\mathcal{E}_{K+1}(x,r)|=O \big(\lambda^\ast_{x,R}\,\mu_x(r)^{K+1}\big)
\) as $r\downarrow0$.
In particular, if $\lambda$ is continuous at $x$, one may write the simpler shorthand
\(
\mathcal{E}_{K+1}(x,r)=O \big(\lambda(x)\,\mu_x(r)^{K+1}\big)
\)
by absorbing $\lambda^\ast_{x,R}/\lambda(x)$ into the hidden constant. The expansion itself only requires $\lambda\in L^1_{\mathrm{loc}}$ to ensure $\mu_x(r)\to0$.
\end{remark}

\begin{lemma}[Intensity for Cox input: measure form]\label{lem:intensity-Cox}
Let $C$ be a Cox process on $\mathbb R^d$ directed by a random locally finite measure $\Lambda$. Fix $r>0$ and a measurable $p:\mathbb N_0\to[0,1]$. For
$x\in\mathbb R^d$ put
\[
M_x(r):=\Lambda \big(B(x,r)\big),\qquad m_p(t):=\mathbb E\big[p(N_t)\big], \text{ with } N_t\sim \mathrm{Poisson}(t)
\]
Then the intensity measure $\alpha_{T_r(C)}$ of $T_r(C)$ satisfies, for every bounded Borel 
$B\subset\mathbb R^d$,
\begin{equation}\label{eq:cox-measure-form}
\alpha_{T_r(C)}(B)\;=\;\mathbb E \left[\ \int_B m_p \big(M_x(r)\big)\,\Lambda(dx)\ \right].
\end{equation}
\end{lemma}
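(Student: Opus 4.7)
The plan is to reduce the Cox case to the Poisson case of Lemma~\ref{lem:intensity-IPPP} by conditioning on the directing random measure $\Lambda$. By definition a Cox process satisfies $C \mid \Lambda \sim \mathrm{PPP}(\Lambda)$, and the $\mathrm{Uniform}(0,1)$ marks used to construct $T_r(C)$ are chosen independent of both $C$ and $\Lambda$. Hence conditionally on $\Lambda$, the pair $(C,\{U_x\})$ is exactly the data of an independent marking of a Poisson input, so the conditional intensity calculation will be performed by the same Campbell--Mecke argument already used in the proof of Lemma~\ref{lem:intensity-IPPP}.

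The only complication is that $\Lambda$ need not be absolutely continuous with respect to Lebesgue measure, so Lemma~\ref{lem:intensity-IPPP} is not directly applicable to the conditional law. I would therefore first record a measure-form analogue: for a deterministic locally finite measure $\mu$ and $X\sim\mathrm{PPP}(\mu)$,
\[
\alpha_{T_r(X)}(B)\;=\;\int_B m_p \bigl(\mu(B(x,r))\bigr)\,\mu(dx).
\]
The derivation is verbatim that of Lemma~\ref{lem:intensity-IPPP}, with $\lambda(x)\,dx$ replaced by $\mu(dx)$: apply the Campbell--Mecke (Slivnyak--Mecke) identity to the nonnegative measurable integrand $(x,\mathbf Y)\mapsto \mathbf{1}_B(x)\,p(n_r(x;\mathbf Y))$, whose joint measurability is guaranteed by Lemma~\ref{lem:meas-nr}, and use that under the Palm measure at $x$ the count $n_r(x;X\cup\{x\})=X(B(x,r))$ is $\mathrm{Poisson}(\mu(B(x,r)))$-distributed, so that its $p$-expectation equals $m_p(\mu(B(x,r)))$. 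No density is used anywhere in this step.

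Next I would apply this identity pathwise in $\Lambda$: conditioning on $\Lambda$ and using that marks and $\Lambda$ are independent gives
\[
\mathbb E\bigl[T_r(C)(B)\bigm|\Lambda\bigr]\;=\;\int_B m_p \bigl(M_x(r)\bigr)\,\Lambda(dx)\qquad\text{a.s.},
\]
and then integrating out $\Lambda$ yields \eqref{eq:cox-measure-form}. Taking the outer expectation is justified by Tonelli, since the integrand is nonnegative and bounded by $\Lambda(dx)$, whose $B$-integral is $\Lambda(B)$ with $\mathbb E[\Lambda(B)]=\alpha_C(B)<\infty$ on bounded $B$. The only genuinely technical point, which I would treat briefly, is verifying joint measurability of $(x,\omega)\mapsto m_p(M_x(\omega,r))$: this follows because $\omega\mapsto\Lambda(\omega,B(x,r))$ is measurable for each $x$, the map $x\mapsto B(x,r)$ is continuous in the Hausdorff sense so $x\mapsto\Lambda(\omega,B(x,r))$ is lower/upper semicontinuous, and $m_p=\mathbb E[p(N_{\cdot})]$ is Borel on $[0,\infty)$. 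I expect this measurability verification to be the main (mild) obstacle; everything else is a direct application of the Poisson-case argument under conditioning.
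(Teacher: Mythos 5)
Your proposal is correct and follows essentially the same route as the paper: condition on $\Lambda$, apply the Campbell--Mecke identity for a Poisson process with general mean measure $\Lambda$ (no density needed), identify the conditional neighbour count as $\mathrm{Poisson}(M_x(r))$, and integrate out $\Lambda$ by Tonelli. Your packaging of the conditional step as a separate deterministic-measure version of Lemma~\ref{lem:intensity-IPPP}, and your explicit remark on the joint measurability of $(x,\omega)\mapsto m_p(M_x(r))$, are only minor organizational additions to the paper's argument.
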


\begin{proof}
Given $\Lambda$, $C$ is a Poisson point process with mean measure $\Lambda$.
By the (inhomogeneous) Campbell--Mecke formula for a Poisson process with mean measure $\Lambda$
(e.g.\ \cite[Thm.~4.1]{LastPenrose2017}),
\[
\mathbb E \left[\sum_{x\in C}\mathbf 1\{x\in B\}\,p\big(n_r(x;C)\big)\,\middle|\,\Lambda\right]
=\int_B \mathbb E \left[p\big(n_r(x;C\cup\{x\})\big)\,\middle|\,\Lambda\right]\Lambda(dx).
\]
For a PPP with mean measure $\Lambda$, the number of other points of $C$ in $B(x,r)$ is
$\mathrm{Poisson}(M_x(r))$ (independent of the added point), hence the inner conditional expectation
equals $ m_p(M_x(r))$. Taking outer expectation and using Tonelli’s theorem (local finiteness of
$\Lambda$) yields \eqref{eq:cox-measure-form}.
\end{proof}

\begin{remark}[Absolutely continuous and singular parts]\label{rem:cox-AC+sing}
Let the (random) Lebesgue decomposition of $\Lambda$ be $\Lambda(dx)=\lambda(x)\,dx+\Lambda^{\mathrm s}(dx)$.
Then Lemma~\ref{lem:intensity-Cox} gives, for bounded $B$,
\[
\alpha_{T_r(C)}(B)\;=\;\mathbb E \left[\int_B \lambda(x)\, m_p \big(M_x(r)\big)\,dx\right]
\;+\;\mathbb E \left[\int_B  m_p \big(M_x(r)\big)\,\Lambda^{\mathrm s}(dx)\right].
\]
In particular, if $\Lambda$ is a.s.\ absolutely continuous with (random) density $\lambda(\cdot)$,
then $\alpha_{T_r(C)}$ is absolutely continuous with \emph{intensity density}
\[
\rho_{T_r(C)}(x)\;=\;\mathbb E \left[\lambda(x)\, m_p \big(M_x(r)\big)\right],
\]
which is the display in the original lemma.
If $\Lambda$ has a singular component (e.g.\ supported on lower-dimensional random sets), then
$\alpha_{T_r(C)}$ may inherit a singular part as well; formula \eqref{eq:cox-measure-form} still
applies verbatim.
\end{remark}

\begin{corollary}[Small--$r$  expansion for Cox input]
\label{cor:cox-fd-expansion}
In the setting of Lemma~\ref{lem:intensity-Cox}, fix $x\in\R^d$ and $K\ge0$. Then
\[
\rho_{T_r(C)}(x)
=\sum_{k=0}^{K}\frac{\Delta^{k}p(0)}{k!}\;\E \big[\lambda(x)\,M_x(r)^{k}\big]
\;+\;\widetilde {\mathcal{E}}_{K+1}(x,r),
\]
where
\[
\Delta^{k}p(0):=\sum_{j=0}^{k}(-1)^j\binom{k}{j}p(k-j),\qquad
|\widetilde{\mathcal{E}}_{K+1}(x,r)|
\le\;
\E \left[\lambda(x)\sum_{k\ge K+1}\frac{(2M_x(r))^k}{k!}\right].
\]
In particular, if there exists $R>0$ with 
\(
\E \big[\lambda(x)\,M_x(R)^{K+1}\big]<\infty,
\)
then as $r\downarrow0$,
\[
\widetilde {\mathcal{E}}_{K+1}(x,r)=O \Big(\E \big[\lambda(x)\,M_x(r)^{K+1}\big]\Big).
\]
Assume further that $M_x(r)/(v_d r^d)\to \lambda(x)$ a.s.\ as $r\downarrow 0$ and that, for each $k=0,1,\dots,K$, the family
\[
\Big\{\ \lambda(x)\Big(\tfrac{M_x(r)}{v_d r^d}\Big)^{k}\ :\ 0<r\le R\ \Big\}
\]
is uniformly integrable. Then
\[
\E \big[\lambda(x)\,M_x(r)^k\big]
=(v_d r^d)^k\,\E \big[\lambda(x)^{k+1}\big]\;+\;o \big(r^{dk}\big),
\]
and hence
\[
\rho_{T_r(C)}(x)
=\sum_{k=0}^{K}\frac{\Delta^{k}p(0)}{k!}\,(v_d r^d)^{k}\,\E \big[\lambda(x)^{k+1}\big]
\;+\;o \big(r^{dK}\big).
\]

\end{corollary}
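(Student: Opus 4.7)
The plan is to reduce the Cox case to a scalar power-series manipulation inside the expectation, treating $M_x(r)$ as a random argument that tends to zero as $r\downarrow 0$.

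First I would start from the absolutely continuous reduction in Remark~\ref{rem:cox-AC+sing}, which under the assumed density form of $\Lambda$ gives $\rho_{T_r(C)}(x)=\E[\lambda(x)\,m_p(M_x(r))]$. I would then invoke the finite-difference series identity
\[
m_p(t)=\sum_{k\ge 0}\frac{\Delta^{k}p(0)}{k!}\,t^{k},\qquad |\Delta^{k}p(0)|\le 2^{k},
\]
established inside the proof of Corollary~\ref{cor:finite-diff-expansion} (which holds for every $t\ge 0$ and is absolutely convergent). Truncating at order $K$ yields $m_p(t)=\sum_{k=0}^{K}\frac{\Delta^{k}p(0)}{k!}t^{k}+R_K(t)$ with $|R_K(t)|\le \sum_{k\ge K+1}(2t)^{k}/k!$. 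Substituting $t=M_x(r)$, multiplying by $\lambda(x)$, and taking expectations---with Tonelli applied to the nonnegative majorant $\lambda(x)\sum_{k}(2M_x(r))^{k}/k!$ legitimizing the interchange of sum and expectation---produces the displayed finite-difference partial sum together with the stated absolute bound on $\widetilde{\mathcal{E}}_{K+1}(x,r)=\E[\lambda(x)R_K(M_x(r))]$.

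For the $O$-statement I would refine the tail estimate to $\sum_{k\ge K+1}(2t)^{k}/k!\le (2t)^{K+1}/(K+1)!\cdot e^{2t}$ and, on $\{0<r\le R\}$, use the monotonicity $M_x(r)\le M_x(R)$. To convert the random factor $e^{2M_x(r)}$ into an effective constant while staying within the bare $(K+1)$-st moment hypothesis, I would split on $\{M_x(R)\le T\}$ and its complement: on the first event $e^{2M_x(r)}\le e^{2T}$ is deterministic, so this piece is bounded by $e^{2T}\E[\lambda(x)M_x(r)^{K+1}]$; on the second event, the contribution is controlled by Markov's inequality applied to the $(K+1)$-st moment of $M_x(R)$, which makes that mass arbitrarily small uniformly in $r\le R$. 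Taking $T$ large then $r$ small yields $|\widetilde{\mathcal{E}}_{K+1}(x,r)|\le C(T,K)\,\E[\lambda(x)M_x(r)^{K+1}]$, giving the claimed $O$-bound as $r\downarrow 0$.

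Finally, for the leading-order $r^{d}$-expansion I would rewrite $\E[\lambda(x)M_x(r)^{k}]=(v_d r^{d})^{k}\,\E[\lambda(x)(M_x(r)/(v_d r^{d}))^{k}]$ and appeal to Vitali's convergence theorem: the a.s.\ convergence $M_x(r)/(v_d r^{d})\to\lambda(x)$ together with the assumed uniform integrability of the family $\{\lambda(x)(M_x(r)/(v_d r^{d}))^{k}:0<r\le R\}$ upgrades a.s.\ convergence to $L^{1}$-convergence, so $\E[\lambda(x)(M_x(r)/(v_d r^{d}))^{k}]\to\E[\lambda(x)^{k+1}]$. Substituting these leading-order equivalents into the truncated expansion and noting that the remainder from the previous step satisfies $\widetilde{\mathcal{E}}_{K+1}(x,r)=O(r^{d(K+1)})=o(r^{dK})$ yields the final display. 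The main obstacle is the remainder $O$-bound: the Taylor tail of $m_p$ naturally carries an $e^{2t}$ factor which is not globally integrable under the bare $(K+1)$-st moment assumption on $M_x(R)$, forcing the event-split-plus-Markov truncation sketched above rather than a clean single-step dominated convergence argument; the rest of the proof is essentially a Poisson--Campbell calculation followed by uniform integrability.
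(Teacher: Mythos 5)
Your overall route coincides with the paper's: reduce to $\rho_{T_r(C)}(x)=\E[\lambda(x)\,m_p(M_x(r))]$ via Lemma~\ref{lem:intensity-Cox}, expand $m_p$ in the finite--difference series with $|\Delta^k p(0)|\le 2^k$, truncate at order $K$ to get the stated bound $|\widetilde{\mathcal{E}}_{K+1}(x,r)|\le\E\big[\lambda(x)\sum_{k\ge K+1}(2M_x(r))^k/k!\big]$, and obtain the final display by Vitali's theorem (a.s.\ convergence of $M_x(r)/(v_dr^d)$ plus the assumed uniform integrability gives $L^1$ convergence of $\lambda(x)(M_x(r)/(v_dr^d))^k$). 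These parts are correct and essentially identical to the paper's argument.

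The gap is in your argument for the intermediate $O$-statement. On $\{M_x(R)\le T\}$ your bound is fine, but on the complement the integrand $\lambda(x)|R_K(M_x(r))|$ is unbounded, so Markov's inequality for $M_x(R)$ only controls the probability of the event, not the expectation over it; to bound that piece you would need an extra pointwise estimate (e.g.\ $|R_K(t)|\le 1+\sum_{k\le K}(2t)^k/k!$, using $0\le m_p\le 1$) and additional moment assumptions beyond $\E[\lambda(x)M_x(R)^{K+1}]<\infty$. More importantly, even granting that, the complement contributes a quantity that is merely small \emph{in absolute terms}, uniformly in $r$, whereas the target $\E[\lambda(x)M_x(r)^{K+1}]$ tends to $0$ as $r\downarrow 0$; a bound of the form $C(T)\,\E[\lambda(x)M_x(r)^{K+1}]+\delta(T)$ with $\delta(T)$ independent of $r$ does not yield $\widetilde{\mathcal{E}}_{K+1}(x,r)=O\big(\E[\lambda(x)M_x(r)^{K+1}]\big)$, and the order of limits ``$T$ large, then $r$ small'' cannot convert an $r$-independent remainder into one proportional to a vanishing quantity. (You are right that something is needed here: the paper's own proof is loose at exactly this point, importing the Poisson-case bound $\lambda^\ast_{x,R}(2\mu_x(r))^{K+1}e^{2\mu_x(R)}/(K+1)!$ as if $M_x(r)$ and the local supremum of $\lambda$ were deterministic.) A secondary slip: your last step invokes $\widetilde{\mathcal{E}}_{K+1}=O(r^{d(K+1)})$, i.e.\ $\E[\lambda(x)M_x(r)^{K+1}]=O(r^{d(K+1)})$, but uniform integrability is assumed only for $k\le K$, so this bound is not available from the stated hypotheses and the $o(r^{dK})$ control of the remainder needs its own justification.
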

\begin{proof}
Fix $x\in\R^d$ and $K\ge0$. By Lemma~\ref{lem:intensity-Cox},
\[
\rho_{T_r(C)}(x)=\E \left[\lambda(x)\, m_p \big(M_x(r)\big)\right] .
\]
Since $e^{-t}\sum_{n\ge0}p(n)t^n/n!=\sum_{k\ge0}\Delta^k p(0)\,t^k/k!$ for all $t\ge0$ ( and $|\Delta^k p(0)|\le 2^k$, we have the absolutely convergent series
\[
 m_p(t)=\sum_{k=0}^\infty \frac{\Delta^k p(0)}{k!}\,t^k,\qquad
\sum_{k\ge0}\frac{|\Delta^k p(0)|}{k!}\,t^k\le e^{2t}.
\]
Insert $t=M_x(r)$ and apply Tonelli with the nonnegative majorant $e^{2M_x(r)}$:
\[
\rho_{T_r(C)}(x)=\sum_{k=0}^\infty \frac{\Delta^k p(0)}{k!}\,\E \big[\lambda(x)\,M_x(r)^k\big].
\]
Truncate at order $K$ and bound the tail using $|\Delta^k p(0)|\le 2^k$ and $0<r\le R$:
\[
\Big|\sum_{k\ge K+1}\frac{\Delta^k p(0)}{k!}\,\E[\lambda(x)M_x(r)^k]\Big|
\le \E \left[\lambda(x)\sum_{k\ge K+1}\frac{(2M_x(r))^k}{k!}\right]
\le \lambda^\ast_{x,R}\,\frac{(2\mu_x(r))^{K+1}}{(K+1)!}\,e^{2\mu_x(R)},
\]
with $\lambda^\ast_{x,R}:=\operatorname*{ess\,sup}_{B(x,R)}\lambda$ and $\mu_x(r)=M_x(r)$. This yields
the displayed finite–difference expansion and the stated local remainder bound (absorbed into
$C_K=C_K(R,p)$).

For the small-$r$ asymptotics: assume $M_x(r)/(v_d r^d)\to \lambda(x)$ a.s.\ as $r\downarrow0$, and that
for each $k\le K$ the family
\[
\Big\{\ \lambda(x)\Big(\tfrac{M_x(r)}{v_d r^d}\Big)^k : 0<r\le R\ \Big\}
\]
is uniformly integrable. Then
\[
\E[\lambda(x)\,M_x(r)^k]
=(v_d r^d)^k\,\E \Big[\lambda(x)\Big(\tfrac{M_x(r)}{v_d r^d}\Big)^k\Big]
=(v_d r^d)^k\,\E[\lambda(x)^{k+1}]+o(r^{dk}).
\]
 Substituting in the truncated series gives the claimed $o(r^{dK})$ expansion.
\end{proof}

\section{Two–point correlation function for the transformation of Poisson input: exact formula and small–radius asymptotics}
\label{subsec:pair-corr-Ta}

We denote by $\rho^{(2)}_Y(\cdot,\cdot)$ the second–order product density of a point process $Y$ and  by $g_{Y}(x,y )$ its pair–correlation function defined through
\[
g_{Y}(x,y) = \frac{\rho^{(2)}_Y(x,y)}{\rho_Y(x) \rho_Y(y)}.
\]
When a point process is stationary and isotropic, we use the radial pair–correlation ${g}^\circ_{Y}:[0,\infty)\to[0,\infty)$,
defined by $g_{Y}^\circ(r):=g_{Y}(x,y)$ for any $(x,y)$ with $\|x-y\|=r$.

In this section, we derive an exact expression for $g_{ Y}$  of $Y:= T_r(X),$ when the input $X$ is a homogeneous Poisson process, followed by small–$r$ asymptotics.


Recall, the \emph{(reduced) two-fold Palm distribution} $\mathbb P^{x,y}$ of $X$ at $(x,y)$ is the probability kernel characterized by the refined Campbell identity (e.g. \citealp[Ch~9]{LastPenrose2017}): for any nonnegative measurable $f:\R^d\times\R^d\times\mathbf N\to[0,\infty)$,
\begin{equation}\label{eq:RC-2fold}
\E \left[\sum_{u\in X}\sum_{v\in X\setminus\{u\}} f\big(u,v,\,X\setminus\{u,v\}\big)\right]
=\int_{\R^d}  \int_{\R^d} \E^{x,y} \big[f(x,y,\,X)\big]\;\rho_X^{(2)}(x,y)\,dx\,dy.
\end{equation}
We write $\E^{x,y}[\cdot]$ for expectation w.r.t.\ $\mathbb P^{x,y}$.
For $X\sim\mathrm{PPP}(\rho_X(\cdot))$ with locally integrable intensity density $\rho_X$, Slivnyak’s theorem implies $\rho_X^{(2)}(x,y)=\rho_X(x)\rho_X(y)$ for $x\neq y$, and $\E^{x,y}[F(X)]=\E[F(X)]$ (\emph{reduced Palm} law equals the original law) (\citealp[Chs.~–9]{LastPenrose2017}).

\begin{lemma}\label{lem:general-rho2}
Let $X$ be a simple point process on $\R^d$ with second product density $\rho_X^{(2)}$.
Fix $r>0$ and a measurable $p:\N_0\to[0,1]$. For a configuration $\mathbf z$ set
$n_r(x;\mathbf z):=\#\big((\mathbf z\cap B(x,r))\setminus\{x\}\big)$ and $\eta(x;\mathbf z):=p(n_r(x;\mathbf z))$.
then, for $x\neq y$,
\[
\rho^{(2)}_{T_r(X)}(x,y)
=\rho_X^{(2)}(x,y)\;\E^{x,y} \Big[\eta\big(x;X\cup\{x,y\}\big)\,\eta\big(y;X\cup\{x,y\}\big)\Big].
\]
\end{lemma}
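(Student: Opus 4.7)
The plan is to characterize $\rho^{(2)}_{T_r(X)}$ through its defining Campbell identity, pass the pair sum over $T_r(X)$ through the independent-marking construction to rewrite it as a weighted pair sum over $X$, and then invoke the refined two-fold Palm identity \eqref{eq:RC-2fold} for $X$ itself. Concretely, I will test against an arbitrary nonnegative measurable $g:\R^d\times\R^d\to[0,\infty)$ that vanishes on the diagonal, evaluate the resulting pair-expectation in two ways, and identify integrands.

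For the first evaluation I use the marking construction. Conditionally on $X$ and on two distinct points $u,v\in X$, the marks $U_u,U_v$ are independent $\mathrm{Uniform}(0,1)$ variables, so the joint retention probability factorises:
\[
\E\bigl[\mathbf 1\{u,v\in T_r(X)\}\mid X\bigr]=\eta(u;X)\,\eta(v;X).
\]
Applying Fubini,
\[
\E\Bigl[\sum_{u\in T_r(X)}\sum_{v\in T_r(X)\setminus\{u\}}g(u,v)\Bigr]
=\E\Bigl[\sum_{u\in X}\sum_{v\in X\setminus\{u\}}g(u,v)\,\eta(u;X)\,\eta(v;X)\Bigr].
\]
For the second evaluation I recast the right-hand side so that \eqref{eq:RC-2fold} can be applied. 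Since $u,v$ are points of $X$, one has $X=(X\setminus\{u,v\})\cup\{u,v\}$ inside $\eta$, so defining
\[
f(u,v,\mathbf z):=g(u,v)\,\eta\bigl(u;\mathbf z\cup\{u,v\}\bigr)\,\eta\bigl(v;\mathbf z\cup\{u,v\}\bigr),
\]
the double sum equals $\sum_{u\in X}\sum_{v\in X\setminus\{u\}} f(u,v,X\setminus\{u,v\})$. The refined two-fold Campbell identity then gives
\[
\int\int g(x,y)\,\rho^{(2)}_{T_r(X)}(x,y)\,dx\,dy
=\int\int g(x,y)\,\rho_X^{(2)}(x,y)\,\E^{x,y}\!\bigl[\eta(x;X\cup\{x,y\})\,\eta(y;X\cup\{x,y\})\bigr]\,dx\,dy,
\]
and varying $g$ yields the claim almost everywhere.

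The main care goes into measurability of $f$ on $\R^d\times\R^d\times\mathbf N$, equipped with $\mathcal B\otimes\mathcal B\otimes\mathcal N$. This follows from Lemma~\ref{lem:meas-nr} together with the standard joint measurability of the map $(u,v,\mathbf z)\mapsto \mathbf z+\delta_u+\delta_v$ and composition with the measurable $p$. A smaller point is that the conditional factorisation uses mutual (not merely pairwise) independence of $\{U_x\}$ and simplicity of $X$ so that the summation indices $u,v$ are genuinely distinct points. No genuinely hard step arises; the lemma is a bookkeeping application of the two-fold Campbell--Mecke formula to the marking definition of $T_r$.
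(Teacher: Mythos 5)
Your proposal is correct and follows essentially the same route as the paper: condition on $X$ to factorise the joint retention probabilities, rewrite the pair sum over $T_r(X)$ as a weighted pair sum over $X$ with the weight function seeing the full configuration $\mathbf z\cup\{u,v\}$, and apply the refined two-fold Campbell identity \eqref{eq:RC-2fold} before identifying densities. The only cosmetic difference is that you test against a general nonnegative $g$ while the paper uses indicators of product sets $A\times B$; your added remarks on measurability and simplicity are fine but not a substantive departure.
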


\begin{proof}
By the construction of the transformation $T_r,$ given $X$,  for $u\neq v$, the retention indicators $\mathbf{1}\{u\in T_r(X)\}$ and $\mathbf{1}\{v\in T_r(X)\}$ are independent and
$$\E[\mathbf{1}\{u\in T_r(X)\}\mathbf{1}\{v\in T_r(X)\}\mid X]=\eta(u;X)\eta(v;X).$$
Thus, for bounded Borel $A,B\subset\R^d$,
\[
\alpha^{(2)}_{Y}(A\times B)
=\E \Big[\sum_{u\in X}\sum_{v\in X\setminus\{u\}}\mathbf{1}\{u\in A,v\in B\}\,\eta(u;X)\eta(v;X)\Big].
\]
Apply \eqref{eq:RC-2fold} with
$f(u,v,\mathbf z)=\mathbf{1}\{u\in A,v\in B\}\,\eta(u;\mathbf z\cup\{u,v\})\,\eta(v;\mathbf z\cup\{u,v\})$,
which accounts for the fact that $\eta(\cdot;X)$ in the sum sees the \emph{full} configuration including $u,v$. We obtain
\[
\alpha^{(2)}_{Y}(A\times B)
=\int_A  \int_B \rho_X^{(2)}(x,y)\;\E^{x,y} \Big[\eta\big(x;X\cup\{x,y\}\big)\,\eta\big(y;X\cup\{x,y\}\big)\Big]\,dx\,dy.
\]
Identifying densities on $x\neq y$ gives the claim.
\end{proof}

\begin{lemma}[Pair product density under $T_r$ for inhomogeneous PPP]\label{lem:rho2-Ta-ihppp}
Let $X\sim\mathrm{PPP}(\lambda(\cdot))$ on $\R^d$, $\lambda\in L^1_{\mathrm{loc}}$. Fix $r>0$ and measurable $p:\N_0\to[0,1]$. Then for $x\neq y$,
\[
\rho^{(2)}_{Y}(x,y)
=\lambda(x)\lambda(y)\,
\E \Big[\,p \big(n_r(x;X\cup\{x,y\})\big)\,p \big(n_r(y;X\cup\{x,y\})\big)\,\Big].
\]
Writing $A:=B(x,r)\setminus B(y,r)$, $B:=B(y,r)\setminus B(x,r)$, $C:=B(x,r)\cap B(y,r)$ and
\[
a(x,y):=\int_A\lambda(u)du,\  b(x,y):=\int_B\lambda(u)du,\ c(x,y):=\int_C\lambda(u)du,\ I(x,y):=\mathbf{1}\{\|x-y\|\le r\},
\]
we have
\begin{equation}\label{eq:rho2-ihppp-explicit}
\rho^{(2)}_{Y}(x,y)
=\lambda(x)\lambda(y)\,e^{-(a+b+c)}
\sum_{i,j,k\ge0} p(i+k+I)\,p(j+k+I)\,\frac{a^i}{i!}\frac{b^j}{j!}\frac{c^k}{k!}.
\end{equation}
\end{lemma}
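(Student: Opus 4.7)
The plan is to combine the general second–order identity from Lemma~\ref{lem:general-rho2} with Slivnyak's theorem for the Poisson input and then to decompose the two neighbour counts on the three–region overlap diagram $A,B,C$. First I would invoke Lemma~\ref{lem:general-rho2} to write
\[
\rho^{(2)}_{Y}(x,y)=\rho_X^{(2)}(x,y)\,\E^{x,y}\bigl[\eta(x;X\cup\{x,y\})\,\eta(y;X\cup\{x,y\})\bigr].
\]
Since $X$ is Poisson, Slivnyak gives $\rho_X^{(2)}(x,y)=\lambda(x)\lambda(y)$ for $x\neq y$ and $\E^{x,y}[F(X)]=\E[F(X)]$, yielding the first (intrinsic) form of the claim. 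All further work is then a deterministic decomposition of the neighbour counts.

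Next I would translate the counts into independent Poisson variables. Since $X$ is simple and Poisson, a.s.\ neither $x$ nor $y$ lies in $X$, so
\[
n_r(x;X\cup\{x,y\})=X\bigl(B(x,r)\bigr)+\mathbf 1\{y\in B(x,r)\}=X(A)+X(C)+I,
\]
and symmetrically $n_r(y;X\cup\{x,y\})=X(B)+X(C)+I$, with $I=\mathbf 1\{\|x-y\|\le r\}$. Because $A$, $B$, $C$ are pairwise disjoint Borel sets, the PPP property makes $X(A)$, $X(B)$, $X(C)$ mutually independent Poissons with parameters $a$, $b$, $c$, respectively; points of $X$ lying outside $B(x,r)\cup B(y,r)$ do not enter either count, so they can be marginalised out immediately.

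The final step is the joint expectation. I would condition on $X(C)=k$ (the shared coordinate that couples the two counts), which leaves $X(A)$ and $X(B)$ independent of each other and of $k$. The conditional factorisation reads
\[
\E\bigl[p(X(A)+k+I)\,p(X(B)+k+I)\bigr]
=\Bigl(\sum_{i\ge 0}\tfrac{a^i e^{-a}}{i!}p(i+k+I)\Bigr)\Bigl(\sum_{j\ge 0}\tfrac{b^j e^{-b}}{j!}p(j+k+I)\Bigr),
\]
and summing against $\mathbb P(X(C)=k)=c^k e^{-c}/k!$ produces exactly the triple series in \eqref{eq:rho2-ihppp-explicit}, with the common prefactor $e^{-(a+b+c)}$. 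Interchanging sums and expectations is justified because $p\le 1$ makes every term nonnegative and bounded.

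The main obstacle is modest: it is the careful bookkeeping of the indicator $I$ and the fact that each of $x,y$ must be excluded from the count at itself but \emph{included} (as an added point) in the count at the other. Once this indicator is identified and the partition $B(x,r)=A\sqcup C$, $B(y,r)=B\sqcup C$ is set up, independence of the Poisson counts on $A$, $B$, $C$ and Fubini on the absolutely convergent nonnegative series reduce the statement to the displayed triple sum.
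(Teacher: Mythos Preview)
Your argument is correct and follows essentially the same route as the paper: invoke Lemma~\ref{lem:general-rho2}, apply Slivnyak to reduce the Palm expectation to the ordinary one and identify $\rho_X^{(2)}=\lambda(x)\lambda(y)$, then decompose the neighbour counts over the disjoint regions $A,B,C$ using independent increments of the Poisson process and Tonelli. Your explicit conditioning on $X(C)=k$ is a slightly more detailed write-up of the same computation the paper sketches.
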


\begin{proof}
Start from Lemma~\ref{lem:general-rho2} and apply Slivnyak for inhomogeneous Poisson point process (\citealp[Chs.~9]{LastPenrose2017}): $\rho_X^{(2)}(x,y)=\lambda(x)\lambda(y)$ and the reduced Palm law equals the original law. This gives the first display.

For the explicit form, use the independent increments property of the Poisson random measure: counts on $A,B,C$ are independent Poisson with the stated means. Because $y\in B(x,r)$ iff $\|x-y\|\le r$, each of the neighbor counts includes the indicator $I=\mathbf{1}\{\|x-y\|\le r\}$. Tonelli's theorem (nonnegative terms) justifies exchanging expectation with the series of Poisson probability distribution, yielding \eqref{eq:rho2-ihppp-explicit}.

\end{proof}

We now specialise to the case where the input is a homogeneous Poisson process $X\sim\mathrm{PPP}(\lambda)$ and identify the pair–correlation function of the thinned process $T_r(X)$ in closed form. The key point is that, for two locations separated by a vector $h$, the joint retention probabilities depend only on the three Poisson counts in the two $r$-balls and their overlap. This yields an explicit representation of $g_{T_r}$ in terms of the overlap volume of two balls of radius $r$, and shows in particular that no dependence is created beyond range $2r$.
Define the normalized overlap coefficient
\[
\omega_d(t):=\frac{|B(0,1)\cap B(te_1,1)|}{v_d}\in[0,1],\qquad \omega_d(t)=0\ \text{for }t\ge2.
\]

\begin{theorem}[Exact pair–correlation for $T_r(\mathrm{PPP}(\lambda))$]\label{thm:g-exact-PPP}
Let $X\sim\mathrm{PPP}(\lambda)$ be homogeneous on $\R^d$ and put $\mu:=\lambda v_d r^d$.
For $h\in\R^d$, write $t:=\|h\|/r.$ 
Let $U,\ V_1,\ V_2$ be independent random variables with
\[
U\sim \mathrm{Poisson} \big(\mu\,\omega_d(t)\big),\ N\sim \mathrm{Poisson} \big(\mu\big),\   V_1,V_2\stackrel{\text{i.i.d.}}{\sim} \mathrm{Poisson} \big(\mu\,(1-\omega_d(t))\big).
\]
Then the pair–correlation function of $T_r(X)$ is
\[
g^\circ_{T_r(X)}(\|h\|)
=\frac{\E \left[p\big(U+V_1+\mathbf{1}\{\|h\|\leq r\}\big)\,p\big(U+V_2+\mathbf{1}\{\|h\|\leq r\}\big)\right]}{\big(\E[p(N)]\big)^2}.
\]
In particular, if $\|h\|>2r$ then $g_{T_r}(h)=1$.
\end{theorem}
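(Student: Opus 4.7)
The plan is to write $g^\circ_{T_r(X)}(\|h\|)$ as the ratio $\rho^{(2)}_{T_r(X)}(x,y)/(\rho_{T_r(X)}(x)\rho_{T_r(X)}(y))$ and compute each factor using the two results already established. For the denominator, Lemma~\ref{lem:intensity-IPPP} specialised to $\lambda(\cdot)\equiv\lambda$ gives $\mu_x(r)=\lambda v_d r^d=\mu$ for every $x$, so $\rho_{T_r(X)}(x)=\lambda\, m_p(\mu)=\lambda\,\E[p(N)]$, and hence the denominator is $\lambda^2\big(\E[p(N)]\big)^2$. For the numerator I will plug constant $\lambda$ into Lemma~\ref{lem:rho2-Ta-ihppp} and rewrite the resulting triple series as a plain expectation.

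The volumes collapse by translation–invariance of Lebesgue measure and the very definition of $\omega_d$: with $t=\|h\|/r$ one has $|B(x,r)\cap B(y,r)|=v_d r^d\,\omega_d(t)$, and by the symmetry between $B(x,r)$ and $B(y,r)$ also $|A|=|B|=v_d r^d(1-\omega_d(t))$. Consequently $a=b=\mu(1-\omega_d(t))$ and $c=\mu\,\omega_d(t)$. Introducing now independent $U\sim\mathrm{Poisson}(c)$, $V_1\sim\mathrm{Poisson}(a)$, $V_2\sim\mathrm{Poisson}(b)$, the identity
\[
e^{-(a+b+c)}\sum_{i,j,k\ge 0} p(i+k+I)\,p(j+k+I)\,\frac{a^i\,b^j\,c^k}{i!\,j!\,k!}
=\E\big[p(U+V_1+I)\,p(U+V_2+I)\big]
\]
is immediate from the product form of independent Poisson laws; interchange of sum and expectation is legitimate because $0\le p\le 1$ makes everything bounded and nonnegative. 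Dividing by $\lambda^2(\E[p(N)])^2$ produces the asserted formula.

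For the final statement I handle the two cases $\|h\|>2r$ directly from this representation. When the balls $B(x,r)$ and $B(y,r)$ are disjoint one has $\omega_d(t)=0$, so $U\equiv 0$, while $V_1,V_2$ are i.i.d.\ $\mathrm{Poisson}(\mu)$, each equal in law to $N$; the indicator $I=\mathbf 1\{\|h\|\le r\}$ also vanishes, so the numerator factorises as $\E[p(V_1)]\,\E[p(V_2)]=(\E[p(N)])^2$ and cancels the denominator. The only subtle point—and the one I expect to deserve the most care—is keeping track of the indicator $I$: under the two-fold Palm conditioning at $(x,y)$, whenever $\|h\|\le r$ each of the two forced points already contributes one unit to the other's neighbour count, and this contribution must be added to the Poisson counts $i+k$ and $j+k$ inside $p(\cdot)$ before any series manipulation. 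Once that bookkeeping is aligned with Lemma~\ref{lem:rho2-Ta-ihppp}, everything else is algebraic.
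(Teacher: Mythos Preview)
Your proposal is correct and follows essentially the same route as the paper: specialise Lemma~\ref{lem:rho2-Ta-ihppp} to constant $\lambda$, identify $a=b=\mu(1-\omega_d(t))$, $c=\mu\,\omega_d(t)$, recognise the triple series as $\E[p(U+V_1+I)\,p(U+V_2+I)]$ for independent Poisson variables, divide by the intensity squared from Lemma~\ref{lem:intensity-IPPP}, and read off $g=1$ for $\|h\|>2r$ from $\omega_d(t)=0$, $I=0$. The extra commentary you give on the indicator $I$ and on justifying the sum-expectation interchange is sound and matches the paper's logic.
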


\begin{proof}
By Lemma~\ref{lem:rho2-Ta-ihppp} with $\lambda(\cdot)\equiv\lambda$, put $x\in\R^d$ and $y=x+h$. Then
\[
a=b=\lambda\,|B(0,r)\setminus B(h,r)|=\mu\,(1-\omega_d(t)),\qquad
c=\lambda\,|B(0,r)\cap B(h,r)|=\mu\,\omega_d(t),
\]
and $I_t:=\mathbf{1}\{\|h\|\le r\}=\mathbf{1}\{t\le 1\}$. Hence
\[
\rho^{(2)}_{T_r(X)}(x,y)
=\lambda^2\,\E \big[p(U+V_1+I_t)\,p(U+V_2+I_t)\big],
\]
where $U\sim\mathrm{Poisson}(\mu\omega_d(t))$, $V_1,V_2\stackrel{\text{i.i.d.}}{\sim}\mathrm{Poisson}(\mu(1-\omega_d(t)))$ are independent (Poisson splitting by disjoint regions).
The intensity of $T_r(X)$ is (Lemma~\ref{lem:intensity-IPPP} in the homogeneous case)
\[
\rho_{T_r(X)}(x)=\lambda\,\E[p(N)],\qquad N\sim\mathrm{Poisson}(\mu).
\]
Therefore, by the definition  of $g^\circ(\|h\|)$ for a stationary isotropic process,
\[
g^\circ_{T_r}(\|h\|)=\frac{\lambda^2\,\E[p(U+V_1+I_t)\,p(U+V_2+I_t)]}{\lambda^2\,(\E[p(N)])^2}
=\frac{\E[p(U+V_1+I_t)\,p(U+V_2+I_t)]}{(\E[p(N)])^2}.
\]
If $\|h\|>2r$ then $\omega_d(t)=0$ and $I_t=0$, so $U\equiv0$, $V_1,V_2\sim\mathrm{Poisson}(\mu)$ i.i.d., whence
\(
\E[p(V_1)p(V_2)]=(\E[p(N)])^2
\)
and $g_{T_r}(h)=1$.
\end{proof}

\subsection{Small–radius asymptotics}
 We derive the first–order expansion of \(g_{T_r}(h)\) as \(r\downarrow 0\)  for a homogeneous Poisson input. 
 
\begin{lemma}[First–order contact–scale expansion ]\label{lem:g-first-order}
Let \(X\sim\mathrm{PPP}(\lambda)\) on \(\mathbb{R}^d\) and let \(Y:=T_r(X)\). Assume \(p(0),p(1)>0\). For \(t=\|h\|/r\in(0,2]\) put \(I:=\mathbf{1}\{t\le 1\}\) and $\mu:=\lambda v_d r^d$. Then, for \(\mu\leq p(0)/4\),
\[
g_Y^\circ(tr)
=
\frac{p(I)^2}{p(0)^2}\Bigg\{1+\mu\Big[
\omega_d(t)\Big(1-\frac{p(I+1)}{p(I)}\Big)^2
+2\,\mathbf{1}_{\{t\le 1\}}\Big(\frac{p(I+1)}{p(I)}-\frac{p(1)}{p(0)}\Big)
\Big]\Bigg\}
+R_t(\mu),
\]
where the remainder is uniform in \(t\):
\[
\sup_{t\in(0,2]}|R_t(\mu)|\ \le\ C(d,p(0))\,\mu^2,
\]
with a finite constant \(C(d,p(0))\) independent of \(t\) and \(r\) (the only dependence on \(p\) comes through \(p(0)\)). In particular:
\begin{itemize}
\item[(i)] If \(t\in(1,2]\), then \(I=0\) and as $r\to 0,$
\[
g_Y^\circ(tr)=1+\mu\,\omega_d(t)\Big(1-\frac{p(1)}{p(0)}\Big)^2+O(r^{2d}).
\]
\item[(ii)] If \(t\in(0,1]\), then \(I=1\) and as $r\to 0,$
\[
g_Y^\circ(tr)
=\frac{p(1)^2}{p(0)^2}\Bigg\{1+\mu\Big[
\omega_d(t)\Big(1-\frac{p(2)}{p(1)}\Big)^2
+2\Big(\frac{p(2)}{p(1)}-\frac{p(1)}{p(0)}\Big)
\Big]\Bigg\}+O(r^{2d}).
\]
\end{itemize}
\end{lemma}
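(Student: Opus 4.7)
The plan is to start from the exact representation
\[
g_Y^\circ(tr) \;=\; \frac{\E\bigl[p(U+V_1+I)\,p(U+V_2+I)\bigr]}{\bigl(\E[p(N)]\bigr)^2}
\]
furnished by Theorem~\ref{thm:g-exact-PPP}, and to expand numerator and denominator to first order in $\mu=\lambda v_d r^d$. I set $\alpha:=\mu\omega_d(t)$ and $\beta:=\mu(1-\omega_d(t))$, so that $U\sim\mathrm{Poisson}(\alpha)$, $V_1,V_2$ are i.i.d.\ $\mathrm{Poisson}(\beta)$, and $\alpha+2\beta\le 2\mu$; this last bound is what makes every subsequent remainder uniform in $t\in(0,2]$.

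For the numerator, I would decompose according to the total count $S:=U+V_1+V_2$. The event $\{S=0\}$ contributes $p(I)^2 e^{-\alpha-2\beta}$; on $\{S=1\}$ the three disjoint sub-events $\{U=1\}$, $\{V_1=1\}$, $\{V_2=1\}$ contribute $p(I+1)^2\alpha e^{-\alpha-2\beta}$ and $p(I+1)p(I)\beta e^{-\alpha-2\beta}$ (twice); and the contribution from $\{S\ge 2\}$ is bounded in absolute value by $P(S\ge 2)\le \tfrac12(\E S)^2\le 2\mu^2$, using $0\le p\le 1$. Taylor-expanding $e^{-\alpha-2\beta}=1-(\alpha+2\beta)+O(\mu^2)$ and writing $\rho:=p(I+1)/p(I)$ (well defined since $p(I)>0$ by hypothesis), the numerator becomes
\[
p(I)^2\bigl\{1+\alpha(\rho^2-1)+2\beta(\rho-1)\bigr\}+O(\mu^2),
\]
and the bracketed expression simplifies by the elementary factorization
\[
\alpha(\rho^2-1)+2\beta(\rho-1) \;=\; \mu(\rho-1)\bigl[\omega_d(t)(\rho-1)+2\bigr] \;=\; \mu\omega_d(t)(\rho-1)^2+2\mu(\rho-1).
\]
The analogous two-term expansion of $\E[p(N)]$ yields $(\E[p(N)])^2=p(0)^2\bigl\{1+2\mu(p(1)/p(0)-1)\bigr\}+O(\mu^2)$.

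Dividing the two expansions via $(1+x)^{-1}=1-x+O(x^2)$ produces the first-order coefficient $\omega_d(t)(1-\rho)^2+2(\rho-p(1)/p(0))$, which matches the bracketed quantity in the statement: the indicator $\mathbf{1}_{\{t\le 1\}}$ is in fact redundant, because $t>1$ forces $I=0$ and hence $\rho=p(1)/p(0)$, killing the second summand on its own. Cases (i) and (ii) then follow by substituting $I=0$ and $I=1$, using $\mu^2=(\lambda v_d)^2r^{2d}$. The hypothesis $\mu\le p(0)/4$ enters to guarantee $\E[p(N)]\ge 3p(0)/4$, which makes the geometric inversion legitimate and bounds its quadratic remainder by a constant multiple of $\mu^2/p(0)^2$.

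The main technical obstacle I anticipate is the uniform-in-$t$ control of the remainder: one must check that the three sources of error — the tail $P(S\ge 2)$, the Taylor remainders of $e^{-\alpha-2\beta}$ and $e^{-\mu}$, and the $O(x^2)$ of the geometric series with $x=2\mu(p(1)/p(0)-1)+O(\mu^2)$ — are all dominated by $C(d,p(0))\,\mu^2$ with no hidden $p$-dependence beyond $p(0)^{-1}$. The boundedness $0\le p\le 1$ together with $\alpha,\beta\le\mu$ makes this essentially a bookkeeping exercise, but it is the only point at which one must verify that the resulting constant really depends on $p$ solely through $p(0)$.
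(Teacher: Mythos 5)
Your proposal is correct and follows essentially the same route as the paper's proof: the exact representation of Theorem~\ref{thm:g-exact-PPP}, the decomposition over $S=U+V_1+V_2\in\{0,1\}$ with the tail $\mathbb{P}(S\ge 2)\le 2\mu^2$, first-order expansion of the exponential and of $\E[p(N)]$, and inversion of the denominator under $\mu\le p(0)/4$, with all remainders controlled uniformly in $t$ via $0\le p\le 1$ and $\alpha,\beta\le\mu$. The only (harmless) cosmetic differences are your factorization $\alpha(\rho^2-1)+2\beta(\rho-1)=\mu\omega_d(t)(\rho-1)^2+2\mu(\rho-1)$ and your correct observation that the indicator $\mathbf{1}_{\{t\le 1\}}$ in the statement is redundant since $\rho=p(1)/p(0)$ when $I=0$.
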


\begin{proof}
Let \(\omega:=\omega_d(t)\). By Theorem~\ref{thm:g-exact-PPP} and isotropy of the homogeneous Poisson process,
\[
g_Y^\circ(tr)=\frac{\mathbb{E}\big[p(U+V_1+I)\,p(U+V_2+I)\big]}{\big(\mathbb{E}\,p(N)\big)^2},
\]
where \(U\sim\mathrm{Poisson}(\mu\omega)\), \(V_1,V_2\stackrel{\text{i.i.d.}}{\sim}\mathrm{Poisson}(\mu(1-\omega))\), \(N\sim\mathrm{Poisson}(\mu)\), independent.

First, let us consider the numerator of $g_{T_r}^\circ(tr).$
Set $S:=U+V_1+V_2\sim\mathrm{Poisson}(\theta)$ with $\theta=\mu(2-\omega)\in[\mu,2\mu]$,
and put \(a_I:=p(I)\), \(b_I:=p(I+1)\). Then
\[
\mathbb{P}(S=0)=e^{-\theta},\quad
\mathbb{P}(U=1,S=1)=\mu\omega\,e^{-\theta},\quad
\mathbb{P}(V_i=1,S=1)=\mu(1-\omega)\,e^{-\theta}\ (i=1,2),
\]
and \(\mathbb{P}(S\ge2)\le \theta^2/2\le 2\mu^2\) for \(\mu\le1\).
Hence
\[
\begin{aligned}
&\mathbb{E}\big[p(U + V_1 + I)\,p(U + V_2 + I)\big]
\\
&
=a_I^2\,\mathbb P(S=0) + a_Ib_I\,\mathbb P(V_1=1,S=1) + b_Ia_I\,\mathbb P(V_2=1,S=1) \\ &\quad + b_I^2\,\mathbb P(U=1,S=1) + \E[\cdot;\,S\ge2]\\
&=e^{-\theta}\Big(a_I^2+\mu\big[2a_I b_I(1-\omega)+b_I^2\omega\big]\Big)+R_{\ge2},
\end{aligned}
\]
with \(|R_{\ge2}|\le 2\mu^2\). Expanding \(e^{-\theta}=1-\theta+R_e\) with \(|R_e|\le \theta^2/2\) and collecting the \(\mu\)-terms gives
\[
\mathbb{E}\big[p(U + V_1 + I)\,p(U + V_2 + I)\big]
= a_I^2+\mu\Big(2a_I(b_I-a_I)+\omega(b_I-a_I)^2\Big)+R_1(\mu,t),
\]
where \(\sup_{t\in(0,2]}|R_1(\mu,t)|\le C\,\mu^2\) for some absolute constant \(C\).

Now we consider the denominator of $g_{T_r}^\circ(tr)$.
Since \(0\le p\le1\),
\[
\mathbb{E}\,p(N)=p(0)+\mu\{p(1)-p(0)\}+R_2(\mu),\qquad |R_2(\mu)|\le C_1\,\mu^2\quad(0<\mu\le1),
\]
with universal \(C_1\) (e.g. \(C_1=e/2\)). If \(p(0)>0\) and \(0<\mu\le p(0)/4\), write \(\mathbb{E}\,p(N)=p(0)[1+\varepsilon(\mu)]\) with \(|\varepsilon(\mu)|\le \tfrac12\). Taylor’s theorem for \(x\mapsto(1+x)^{-2}\) yields
\[
\frac{1}{(\mathbb{E}\,p(N))^2}
=\frac{1}{p(0)^2}\Big(1-2\mu\,\frac{p(1)-p(0)}{p(0)}+R_3(\mu)\Big),
\qquad
|R_3(\mu)|\le C_2\,\frac{\mu^2}{p(0)^2},
\]
uniformly in \(t\), for a universal \(C_2\).

Multiplying the numerator expansion and the inverse denominator and regrouping gives the stated first–order term; all remainders are \(O(\mu^2)\) uniformly in \(t\). The only \(p\)-dependence in the global constant comes from the inversion via \(p(0)\).
\end{proof}

\begin{lemma}[Contact--scale asymptotics when $p(0)=0$ and $p(1)>0$]
\label{lem:g-first-order-p0zero}
Let $X\sim\mathrm{PPP}(\lambda)$ on $\R^d$, $r>0.$
Assume $p(0)=0$ and $p(1)=:c>0$.
For $t=\|h\|/r\in(0,2]$ set $\omega:=\omega_d(t)$ and write $s:=p(2)/p(1)$, $\mu:=\lambda v_d r^d.$
Then, as $\mu\downarrow 0$,
\begin{align}
\label{eq:intensity-p0zero}
\rho_{T_r}
&= \lambda\,\E[p(N)] \;=\; \lambda\Big(\mu\,c \;+\; \mu^2\big(\tfrac{1}{2}p(2)-c\big) \;+\; O(\mu^3)\Big),\qquad N\sim\mathrm{Poisson}(\mu).
\end{align}
Moreover, the radial pair–correlation on the contact scale satisfies:
\begin{itemize}
\item[(a)] If $1<t\le 2$ (no mutual inclusion, some overlap; $I=0$), as $\mu\downarrow 0$
\begin{equation}\label{eq:p0zero-ring}
g^\circ_{T_r}(tr) \;=\; \frac{\omega_d(t)}{\mu} \;+\; O(1).
\end{equation}
\item[(b)] If $0<t\le 1$ (mutual inclusion; $I=1$), as $\mu\downarrow 0$
\begin{equation}\label{eq:p0zero-contact}
g^\circ_{T_r}(tr)
\;=\; \frac{1}{\mu^2}\,\Big\{\,1 \;+\; \mu\big[\, s \;+\; \omega_d(t)\,(1-s)^2 \,\big] \Big\} \;+\; O(1).
\end{equation}
\end{itemize}
\end{lemma}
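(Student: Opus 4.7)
The approach is to apply Theorem~\ref{thm:g-exact-PPP} directly and Taylor-expand the resulting numerator and denominator in $\mu\downarrow 0$, using the assumption $p(0)=0$ to kill the lowest-order contributions. Throughout, write $c=p(1)$ and $s=p(2)/p(1)$, and note that the remainders are controlled uniformly in $t\in(0,2]$ since $\omega:=\omega_d(t)\in[0,1]$.

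First I would expand the intensity. Since $p(0)=0$, only $n\ge 1$ contribute in $\E[p(N)]=e^{-\mu}\sum_{n\ge 1}p(n)\mu^n/n!$, and a direct product of the two Taylor series yields the first line of \eqref{eq:intensity-p0zero}. Squaring and inverting then gives $(\E[p(N)])^{-2}=(c^2\mu^2)^{-1}\{1+(2-s)\mu+O(\mu^2)\}$, which will serve as the denominator in the ratio from Theorem~\ref{thm:g-exact-PPP}. Next I would handle the numerator $\E[p(U+V_1+I)p(U+V_2+I)]$ by enumerating the configurations $\{U=V_1=V_2=0\}$ and the single-event cases $\{U=1,V_1=V_2=0\}$, $\{V_i=1,\ \text{others}=0\}$, bounding the remaining mass $\{U+V_1+V_2\ge 2\}$ by $O(\mu^2)$ via a standard Poisson tail estimate.

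In Case~(a), $I=0$, the condition $p(0)=0$ forces both $U+V_1\ge 1$ and $U+V_2\ge 1$; among single-event configurations the only one compatible with this is $U=1,\ V_1=V_2=0$, contributing $\mu\omega c^2$, while every other compatible configuration is already $O(\mu^2)$. Dividing by $c^2\mu^2+O(\mu^3)$ then yields \eqref{eq:p0zero-ring}. In Case~(b), $I=1$, the baseline term $\{U=V_1=V_2=0\}$ contributes $c^2 e^{-\mu(2-\omega)}$, supplemented at order $\mu$ by $\mu\omega p(2)^2+2\mu(1-\omega)c p(2)$ from the three single-event configurations; combined with the expansion $e^{-\mu(2-\omega)}=1-\mu(2-\omega)+O(\mu^2)$, this gives the numerator in the form $c^2\{1+\mu[\omega(1-s)^2-2(1-s)]\}+O(\mu^2)$ after rewriting $\omega s^2+2(1-\omega)s-(2-\omega)=\omega(1-s)^2-2(1-s)$.

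Multiplying this numerator by the inverse-denominator expansion produces a factor $1+\mu[\omega(1-s)^2-2(1-s)+(2-s)]+O(\mu^2)$, and the algebraic simplification $-2(1-s)+(2-s)=s$ collapses the bracket into $s+\omega(1-s)^2$, yielding \eqref{eq:p0zero-contact}. The main obstacle is precisely this cancellation: the $O(\mu)$ corrections coming separately from the numerator and from the inverse square of the intensity each contain terms linear in $s$ that are absent from the final answer, so I would keep both contributions tracked symbolically through the expansion and verify the cancellation explicitly. A secondary point is to ensure that the $O(\mu^2)$ remainders in numerator and denominator translate into $O(1)$ contributions to $g^\circ_{T_r}(tr)$ after division by $\mu^2$, and that this bound is uniform in $t\in(0,2]$; this follows because all constants in the Poisson tail estimates depend only on $\mu$ and $\|p\|_\infty\le 1$, and $\omega$ is bounded.
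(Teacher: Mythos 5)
Your proposal is correct and follows essentially the same route as the paper's proof: expand the denominator via the series for $\E[p(N)]$ with $p(0)=0$, enumerate the configurations with $U+V_1+V_2\in\{0,1\}$ in the numerator of the exact formula from Theorem~\ref{thm:g-exact-PPP} (bounding the $\ge 2$ tail by $O(\mu^2)$), and multiply, with the same algebraic cancellation $-2(1-s)+(2-s)=s$ collapsing the bracket to $s+\omega_d(t)(1-s)^2$. The uniformity remark and the tracking of the $O(\mu^2)$ remainders match the paper's treatment.
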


\begin{proof}
We use Theorem~\ref{thm:g-exact-PPP} and expand both numerator and denominator.

For denominator, with $p(0)=0$, a direct series expansion gives
\[
\E[p(N)]
= e^{-\mu} \sum_{k\ge 1} p(k)\,\frac{\mu^k}{k!}
= \mu\,p(1) + \mu^2\Big(\tfrac{1}{2}p(2)-p(1)\Big) + O(\mu^3),
\]
which is \eqref{eq:intensity-p0zero}. Hence
\[
\big(\E[p(N)]\big)^2
= \mu^2 c^2\Big(1 + \mu\big(\tfrac{p(2)}{c}-2\big) + O(\mu^2)\Big),
\qquad
\frac{1}{\big(\E[p(N)]\big)^2}
= \frac{1}{\mu^2 c^2}\Big(1 - \mu\big(\tfrac{p(2)}{c}-2\big) + O(\mu^2)\Big).
\]

For the numerator, we consider two cases. In the case $1<t\le 2$ ($I=0$) write the two arguments as $U+V_1$ and $U+V_2$ with
$U\sim\mathrm{Poisson}(\mu\omega)$ and $V_1,V_2\stackrel{\mathrm{i.i.d.}}{\sim}\mathrm{Poisson}(\mu(1-\omega))$, independent.
Since $p(0)=0$, the only $O(\mu)$ contribution arises from $U=1$, $V_1=V_2=0$, which yields $p(1)^2=c^2$.
Thus
\[
\E\big[p(U+V_1)\,p(U+V_2)\big] \;=\; \mu\,\omega\,c^2 \;+\; O(\mu^2).
\]
Multiplying by $1/(\E[p(N)])^2$ gives
\[
g^\circ_{T_r}(tr)
= \big(\mu\,\omega\,c^2 + O(\mu^2)\big)\cdot \frac{1}{\mu^2 c^2}\Big(1 - \mu\big(\tfrac{p(2)}{c}-2\big) + O(\mu^2)\Big)
= \frac{\omega}{\mu} + O(1),
\]
which is \eqref{eq:p0zero-ring}.

For the case $0<t\le 1$ ($I=1$) the arguments are $U+V_1+1$ and $U+V_2+1$. To $O(\mu)$ we have
\[
\mathbb P(U=V_1=V_2=0)=1-\mu(2-\omega)+O(\mu^2),\quad
\mathbb P(V_i=1)=\mu(1-\omega)+O(\mu^2),\quad
\mathbb P(U=1)=\mu\omega+O(\mu^2).
\]
Therefore
\[
\begin{aligned}
&\E\big[p(U+V_1+1)\,p(U+V_2+1)\big]\\
&\qquad= c^2\big(1-\mu(2-\omega)\big)\;+\; 2\,\mu(1-\omega)\,c\,p(2)\;+\;\mu\omega\,p(2)^2 \;+\; O(\mu^2)\\
&\qquad= c^2\Big\{1 + \mu\Big[-(2-\omega)+2(1-\omega)\,\frac{p(2)}{c} + \omega\Big(\frac{p(2)}{c}\Big)^2\Big]\Big\} + O(\mu^2).
\end{aligned}
\]
Multiply by $1/(\E[p(N)])^2$:
\[
\begin{aligned}
g^\circ_{T_r}(tr)
&= \frac{c^2\Big\{1 + \mu\big[-(2-\omega)+2(1-\omega)\,s + \omega s^2\big]\Big\}+O(\mu^2)}{\mu^2 c^2\Big(1 + \mu\big(\tfrac{p(2)}{c}-2\big) + O(\mu^2)\Big)}\\
&= \frac{1}{\mu^2}\,\Big\{1 + \mu\Big(\underbrace{-(2-\omega)+2(1-\omega)\,s + \omega s^2 - (s-2)}_{=\,s+\omega(1-s)^2}\Big)\Big\} + O(1),
\end{aligned}
\]
which simplifies exactly to \eqref{eq:p0zero-contact}.
\end{proof}

\subsection{Examples: attraction vs.\ repulsion at small scales}
\label{subsec:examples-attract-repel}

We illustrate how specific choices of the neighbour–count retention rule $p(n)$ generate \emph{repulsion} ($g^\circ_{T_r}(r')<1$) or \emph{attraction} ($g^\circ_{T_r}(r')>1$) at contact scales when the input is $X\sim\mathrm{PPP}(\lambda)$. Throughout, we use the exact representation and the small–$r$ expansions established in Theorem~\ref{thm:g-exact-PPP}, Lemma~\ref{lem:g-first-order} and Lemma~\ref{lem:g-first-order-p0zero}. 

For the reader's convenience, we recall the notations introduced above. 
Let $\mu:=\lambda v_d r^d\downarrow 0$ and write $t:=\|h\|/r\in(0,2]$ to probe separations on the \emph{contact scale}. Set $\omega:=\omega_d(t)\in[0,1]$ for the normalized overlap volume of two $r$–balls at distance $\|h\|=tr$, and $I:=\mathbf{1}\{t\le 1\}$. For brevity define
\[
a_I:=p(I),\qquad b_I:=p(I+1),\qquad D_0:=p(0).
\]

\begin{example}[Mat\'ern type~I: hard--core thinning]
\label{ex:maternI-g}
Take $p(n)=\mathbf{1}\{n=0\}$ and fix $t\in(0,2]$. Then:
\begin{itemize}
\item If $t\le 1$, the mutual inclusion indicator $I=1$ implies both arguments of $p$ in Theorem~\ref{thm:g-exact-PPP} are $\ge 1$, hence $g_{T_r}(tr)=0$ exactly (hard--core with radius $r$).
\item If $1<t\le 2$, then $I=0$ and $p(1)=0$, so Lemma~\ref{lem:g-first-order-p0zero}(i) gives
\[
g_{T_r}^\circ(tr) \;=\; 1\;+\;\mu\,\omega_d(t)\;+\;O(\mu^2)\;>\;1.
\]
Thus Mat\'ern~I exhibits strict \emph{repulsion} at distances $<r$ and a classical \emph{ring attraction} on $(r,2r)$; see also \cite[§5.4]{ChiuEtAl2013}.
\end{itemize}
\end{example}

\begin{example}[Soft–core, geometric retention]\label{ex:geom}
Let $p(n)=q\,s^{\,n}$ with $q\in(0,1]$ and $s\in(0,1)$. Then
$p(0)=q$, $p(1)=q s$, $p(2)=q s^2$, and hence
\[
\frac{p(1)}{p(0)}=s,\qquad \frac{p(2)}{p(1)}=s.
\]

Using Lemma~\ref{lem:g-first-order}:

\smallskip
\noindent\emph{(Ring, $1<t\le 2$).} Here $I=0$, so Lemma~\ref{lem:g-first-order}(i) yields
\[
g_{T_r}^\circ(tr)
\;=\;1+\mu\,\omega_d(t)\,(1-s)^2\;+\;O(\mu^2)\;>\;1,
\]
In words, after short-range inhibition there is an enhanced likelihood of finding another point at distances just beyond $r$, a well-known feature of Mat\'ern-type and soft-core thinnings \cite[§5.4]{ChiuEtAl2013}, \cite[§6.3]{MollerWaagepetersen2003}.

\emph{(Contact, $0<t\le 1$).} Here $I=1$, and Lemma~\ref{lem:g-first-order}(ii) gives
\[
g_{T_r}^\circ(tr)
\;=\;\Big(\tfrac{p(1)}{p(0)}\Big)^2
\Big\{1+\mu\big[\omega_d(t)\big(1-\tfrac{p(2)}{p(1)}\big)^2
+2\big(\tfrac{p(2)}{p(1)}-\tfrac{p(1)}{p(0)}\big)\big]\Big\}+O(\mu^2).
\]
Since $p(2)/p(1)=p(1)/p(0)=s$, the linear term outside $\omega_d(t)$ cancels, and we obtain
\[
g_{T_r}^\circ(tr)\;=\;s^2\Big\{1+\mu\,\omega_d(t)\,(1-s)^2\Big\}+O(\mu^2)\;<\;1
\]
for all sufficiently small $\mu$ (because $s^2<1$). Thus there is \emph{repulsion at contact} ($\|h\|\le r$).

\smallskip
In summary, the geometric soft–core rule produces the canonical “repulsion at contact, attraction on the ring” pattern. The special case $s=1$ reduces to independent thinning ($g_{T_r}\equiv 1$).
\end{example}

\begin{example}[Count--favouring retention near contact]
\label{ex:increasing-local}
Assume $p(0)>0$ and $p(1)>p(0)$ (locally increasing at $n=0\to 1$). No global monotonicity of $p$ is needed. Then, keeping $\mu=\lambda v_d r^d\downarrow 0$:
\begin{itemize}
\item For $0<t\le 1$ ($I=1$), Lemma~\ref{lem:g-first-order}(ii) gives
\[
g_{T_r}^\circ(tr)
=\Big(\tfrac{p(1)}{p(0)}\Big)^{2}\Big\{1+\mu\Big[\omega_d(t)\Big(1-\tfrac{p(2)}{p(1)}\Big)^2
+2\Big(\tfrac{p(2)}{p(1)}-\tfrac{p(1)}{p(0)}\Big)\Big]\Big\}+O(\mu^2).
\]
Hence $g_{T_r}^\circ(tr)>1$ for all sufficiently small $\mu$ (since $(p(1)/p(0))^2>1$ and the bracket equals $1+O(\mu)$). This yields \emph{attraction} at contact generated purely by the count--favouring rule.
\item For $1<t\le 2$ ($I=0$), Lemma~\ref{lem:g-first-order}(i) gives
\[
g_{T_r}^\circ(tr)=1+\mu\,\omega_d(t)\Big(1-\tfrac{p(1)}{p(0)}\Big)^{2}+O(\mu^2)\;\ge\;1,
\]
with equality at first order iff $p(1)=p(0)$ (see Example~\ref{ex:flat}).
\end{itemize}
\end{example}

\begin{example}[Suppressing the ring: flat rule at $n=0,1$]
\label{ex:flat}
Suppose $p(1)=p(0)>0$ (flat to first order at the origin of the count scale). Then, for $1<t\le 2$ ($I=0$),
\[
g_{T_r}^\circ(tr)=1+O(\mu^2)
\]
by Lemma~\ref{lem:g-first-order}(i), so the \emph{first--order} ring overshoot vanishes.
If, in addition, $p(2)=p(1)=p(0)$, then Lemma~\ref{lem:g-first-order}(ii) gives
$g_{T_r}^\circ(tr)=1+O(\mu^2)$ also for $0<t\le 1$.
If $p(n)\equiv p$ for all $n$ (independent thinning), then $T_r(X)$ is Poisson and $g_{T_r}\equiv 1$ exactly.
\end{example}


\begin{example}[Cluster–favouring]
\label{ex:cluster-favoring}
Let $p(n)=1-e^{-\alpha n}$ with $\alpha>0$. Then $p(0)=0$, $c:=p(1)=1-e^{-\alpha}\in(0,1)$, and $p(2)=1-e^{-2\alpha}=c(2-c)$, so
\[
s:=\frac{p(2)}{p(1)}=1+e^{-\alpha}.
\]
\emph{Intensity.} Using $\E[1-e^{-\alpha N}]=1-\exp(\mu(e^{-\alpha}-1))$ for $N\sim\mathrm{Poisson}(\mu)$,
\[
\lambda\big(T_r(X)\big)=\lambda\Big(1-\exp \big(\mu(e^{-\alpha}-1)\big)\Big)
=\lambda\Big(\mu c+\mu^2\Big(\tfrac12 p(2)-c\Big)+O(\mu^3)\Big),\qquad \mu=\lambda v_d r^d.
\]

\emph{Radial pair correlation (contact scale $\|h\|=tr$).}
By Lemma~\ref{lem:g-first-order-p0zero}, with $\omega:=\omega_d(t)$:
\begin{itemize}
\item If $1<t\le 2$ ($I=0$),
\[
g^\circ_{T_r}(tr)=\frac{\omega_d(t)}{\mu}+O(1),\qquad \mu\downarrow 0.
\]
\item If $0<t\le 1$ ($I=1$),
\[
g^\circ_{T_r}(tr)
=\frac{1}{\mu^{2}}\Big\{\,1+\mu\big[s+\omega_d(t)\,(1-s)^2\big]\Big\}+O(1)
=\frac{1}{\mu^{2}}\Big\{\,1+\mu\big[\,1+e^{-\alpha}+\omega_d(t)\,e^{-2\alpha}\big]\Big\}+O(1).
\]
\end{itemize}
Thus $p(n)=1-e^{-\alpha n}$ produces pronounced \emph{clustering} for $\|h\|\le 2r$, with a contact peak of order $\mu^{-2}$ on $[0,r]$ and a ring level of order $\mu^{-1}$ on $(r,2r]$.
\end{example}

\begin{remark}[Comparison with Mat\'ern thinning]
\label{rem:compare-matern}
Example~\ref{ex:maternI-g} reproduces the classical Mat\'ern~I behaviour: \emph{complete repulsion} for $\|h\|<r$ and a \emph{ring} with $g>1$ on $(r,2r)$ (see \cite[§5.4]{ChiuEtAl2013}). Example~\ref{ex:geom} generalizes this to soft–core rules $p(n)=q\,s^n$: we still obtain $g<1$ at contact and $g>1$ in the ring, with amplitudes modulated by $s$. In contrast, Example~\ref{ex:increasing-local} shows a phenomenon \emph{not} present in Mat\'ern I/II: by choosing a locally increasing count–favouring rule with $p(1)>p(0)$ (keeping $p$ bounded), one obtains $g>1$ already at contact scales ($\|h\|\le r$) for sufficiently small $r$, while retaining the ring overshoot on $(r,2r)$. This attraction is driven by the thinning rule itself, rather than by an external clustering mechanism.
\end{remark}

\section{Poisson approximation by complementary routes}\label{sec:poisson-approx-summary}
In this section we  work on the subspace $\mathbf{N}(W):=\{\xi\in\mathbf{N}:\xi(W^{\mathrm c})=0\}$ of finite counting
measures supported in a bounded Borel window $W$, endowed with the canonical $\sigma$–field
\[
\mathcal{N}_W:=\sigma\{\ \xi\mapsto \xi(B):\ B\in\mathcal{B}(W)\ \}.
\]
As before, we consider $X\sim\mathrm{PPP}(\lambda)$ on $\mathbb{R}^d$, $r>0$ is fixed, and $ T_r$ is the neighbour–count dependent thinning from Section~\ref{sec:Ta} with measurable $p:\mathbb{N}_0\to[0,1]$. Recall notations
\[
\mu:=\lambda v_d r^d,\qquad m_p:=\mathbb{E}\big[p(N_r)\big],\qquad N_r\sim\mathrm{Poisson}(\mu),
\]
and denote by $\lambda':=\lambda m_p$ the intensity of $ T_r(X)$ (Lemma~\ref{lem:intensity-IPPP}). For restriction of the point processes on $W$ we write
\[
Y_W:= T_r(X) \restriction W,\qquad \Pi_W:=\mathrm{PPP}\big(\lambda'\,\mathbf{1}_W\,dx\big).
\]

We develop three complementary approaches to approximate the windowed thinning \(Y_W:=T_r(X)\restriction W\) by a Poisson process with matched intensity \(\lambda'=\lambda\,m_p\).
Each route works under the same \(r\)-local construction but quantifies the error in a \emph{different metric}, leading to slightly different upper bounds and regimes of sharpness:

(i) a \emph{direct coupling} that yields total variation control \(d_{\mathrm{TV}}\) on \(\mathbf N(W)\), together with an inhomogeneous extension (via Campbell–Mecke/Palm calculus and standard coupling inequalities; \cite{DaleyVereJones2008,LastPenrose2017,Lindvall1992,Thorisson2000});

(ii) a \emph{Laplace–functional} discrepancy, exploiting that all dependence is confined to pair distances \(\le 2r\);

 (iii) a \emph{Stein bound} in the Barbour–Brown \(d_2\) Wasserstein–type distance induced by the matching metric \(d_1\) on configurations, which reveals an explicit dependence on the short–range pair correlation \(g_{T_r}-1\) (\cite{BarbourBrown1992,Schuhmacher2009,BarbourBrown1992,Schuhmacher2009Bernoulli}).

\begin{definition}[Total variation on $\mathbf{N}(W)$]\label{def:TV-W}
For probability laws $P,Q$ on $(\mathbf{N}(W),\mathcal{N}_W)$ we define
\[
d_{\mathrm{TV}}(P,Q):=\sup_{A\in\mathcal{N}_W}|P(A)-Q(A)|.
\]
The equivalent coupling characterization is
\[
d_{\mathrm{TV}}(P,Q)
=\inf\Big\{\mathbb{P}\{\Xi\neq\zeta\}:\ (\Xi,\zeta)\ \text{is a coupling of }(P,Q)\Big\}.
\]
In particular, for any given coupling $(\Xi,\zeta)$ with marginals $P$ and $Q$,
\begin{equation}\label{eq:coupling-ineq-TV}
d_{\mathrm{TV}}(P,Q)\ \le\ \mathbb{P}\{\Xi\neq \zeta\}.
\end{equation}

\end{definition}

Using Stein method we compare the laws $\mathcal{L}(Y_W)$ and $\mathcal{L}(\Pi_W)$ in the Wasserstein-type $d_2$–metric on finite configurations of $W$ induced by the $d_1$ matching distance (Barbour–Brown \cite{BarbourBrown1992}; cf.\ \cite[§2]{Schuhmacher2009Bernoulli}).%

\begin{definition}[Barbour--Brown $d_1$ and induced $d_2$ on $\mathbf{N}(W)$]
Let $(W,d_0)$ be a bounded metric space with $d_0\le 1.$
For $\xi=\sum_{i=1}^n\delta_{x_i},\ \eta=\sum_{j=1}^m\delta_{y_j}\in\mathbf{N}(W)$ set
\[
d_1(\xi,\eta)=
\begin{cases}
\displaystyle \min_{\pi\in S_n}\frac{1}{n}\sum_{i=1}^n d_0(x_i,y_{\pi(i)}), & n=m\ge 1,\\[1ex]
1, & n\ne m,\\
0, & n=m=0.
\end{cases}
\]
For laws $P,Q$ on $\mathbf{N}(W)$ define
\[
d_2(P,Q):=\sup\Big\{\big|\E_P f-\E_Q f\big|, \text{ for }\ f:\mathbf{N}(W)\to[0,1],\
|f(\xi)-f(\eta)|\le d_1(\xi,\eta)\ \forall\,\xi,\eta\in\mathbf{N}(W)\Big\}.
\]
\end{definition}

To apply the local–dependence property of $Y_W$, we need to identify the maximal distance up to which dependence may occur (equivalently, the minimal distance beyond which independence holds).
For a point process $Y$ on $\mathbb{R}^d$ and Borel sets $A,B$, let $\mathcal{F}_Y(A)$ be the $\sigma$–field generated by $Y\cap A$.
We write $A^{r}:=\{x:\ \mathrm{dist}(x,A)\le r\}$ for the $r-$ neighbourhood of $A.$

\begin{lemma}[Finite–range independence for Poisson input]\label{lem:finite-range}
Let $X\sim\mathrm{PPP}(\lambda)$ on $\R^d$, and let $Y=T_r(X).$ For bounded Borel sets $A,B\subset\R^d$ with $\mathrm{dist}(A,B)>2r$,
\[
\mathcal{F}_{Y}(A)\ \text{ and }\ \mathcal{F}_{Y}(B)\quad\text{are independent.}
\]
\end{lemma}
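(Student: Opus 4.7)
The plan is to combine the strict $r$-locality of the thinning rule $T_r$ with the complete independence of a Poisson process on disjoint Borel sets. The core observation is that since the decision to retain any point $x\in A$ uses only $X\cap B(x,r)\subset X\cap A^r$ and the independent mark $U_x$, the restriction $Y\cap A$ is a measurable function of the data on $A^r$ alone, and similarly $Y\cap B$ is determined by the data on $B^r$. Because the hypothesis $\mathrm{dist}(A,B)>2r$ forces $A^r\cap B^r=\emptyset$, Poisson independence on disjoint regions will deliver the result.

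Concretely, I would first verify that for every $x\in A$ one has $B(x,r)\subset A^r$, so that the neighbour count $n_r(x;X)=\#((X\cap B(x,r))\setminus\{x\})$ depends only on $X\cap A^r$. Via Lemma~\ref{lem:meas-nr} and the marking construction in Definition~\ref{def:Ta}, this shows that $Y\cap A$ is a measurable function of the marked configuration $\Phi_A:=(X\cap A^r,\{U_y\}_{y\in X\cap A^r})$, with the analogous statement for $Y\cap B$ and $\Phi_B:=(X\cap B^r,\{U_y\}_{y\in X\cap B^r})$. Next, I would check that $A^r\cap B^r=\emptyset$: any common point $z$ would satisfy $\mathrm{dist}(z,A)\le r$ and $\mathrm{dist}(z,B)\le r$, forcing $\mathrm{dist}(A,B)\le 2r$ by the triangle inequality, a contradiction.

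Finally, independent scattering of $X\sim\mathrm{PPP}(\lambda)$ on disjoint Borel sets yields that $X\cap A^r$ and $X\cap B^r$ are independent point processes; and since the marks $\{U_x\}_{x\in X}$ are i.i.d.\ Uniform$(0,1)$ and jointly independent of $X$, partitioning them according to whether the underlying points lie in $A^r$ or $B^r$ preserves independence, giving $\Phi_A\perp\Phi_B$. Hence $\mathcal{F}_Y(A)$ and $\mathcal{F}_Y(B)$, as the $\sigma$-fields generated by measurable functions of $\Phi_A$ and $\Phi_B$ respectively, are independent. The only (mild) technical point is the measurability bookkeeping for a mark family indexed by a random point configuration; this is routine using the canonical independent-marking construction, so I do not foresee any substantive obstacle.
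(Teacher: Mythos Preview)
Your proposal is correct and follows essentially the same route as the paper: show $A^r\cap B^r=\varnothing$, invoke Poisson independence on disjoint regions together with independence of the uniform marks, and conclude that $\mathcal F_Y(A)$ and $\mathcal F_Y(B)$ sit inside independent $\sigma$-fields. The only cosmetic difference is that the paper records the relevant mark family as $\{U_x:x\in X\cap A\}$ rather than your slightly larger $\{U_y:y\in X\cap A^r\}$, which of course does not affect the argument.
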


\begin{proof}
 If $\mathrm{dist}(A,B)>2r$, then $A^{r}\cap B^{r}=\varnothing$. Since a Poisson process has independent values on disjoint sets, the $\sigma$–fields $\sigma \big(X \restriction A^{r}\big)$ and $\sigma \big(X \restriction B^{r}\big)$ are independent \citep[Thm.~3.2]{LastPenrose2017}. The mark families $\{U_x:x\in A\}$ and $\{U_y:y\in B\}$ are independent of $X$ and of each other by construction. Therefore
\[
\sigma \big(X \restriction A^{r}\big)\vee\sigma(U_x:x\in A)
\quad\text{and}\quad
\sigma \big(X \restriction B^{r}\big)\vee\sigma(U_y:y\in B)
\]
are independent $\sigma$–fields, which implies the independence of $\mathcal{F}_Y(A)\subseteq \sigma \big(X \restriction A^{r}\big)\vee\sigma(U_x:x\in A)$ and $\mathcal{F}_Y(B)\subseteq \sigma \big(X \restriction B^{r}\big)\vee\sigma(U_y:y\in B)$. 
\end{proof}

\begin{lemma}[Support of factorial cumulants under finite range]\label{lem:cumulant-support}
Let $Y=T_r(X)$ with $X\sim\mathrm{PPP}(\lambda)$ and fix $r>0$. Then for every $n\ge2$ the factorial cumulant density $\kappa_Y^{(n)}(x_1,\dots,x_n)$ vanishes whenever the $2r$–neighbourhood graph on $\{x_1,\dots,x_n\}$ is disconnected. 
\end{lemma}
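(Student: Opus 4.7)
The plan is to reduce the statement to the classical vanishing of joint cumulants of independent subfamilies of random variables, applied to small neighborhoods of the points $x_1,\dots,x_n$, via the finite-range independence established in Lemma~\ref{lem:finite-range}. The main work is the algebraic bridge between the factorial cumulant \emph{density} $\kappa_Y^{(n)}$ and joint ordinary cumulants of counts on \emph{disjoint} test sets, after which the conclusion is essentially immediate.

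Concretely, I would proceed as follows. First, since the $2r$-neighbourhood graph on $\{x_1,\dots,x_n\}$ is disconnected, choose a nontrivial partition $[n]=S\sqcup T$ with $\min_{i\in S,\,j\in T}\|x_i-x_j\|>2r$, and then pick $\varepsilon>0$ so small that the balls $A_i:=B(x_i,\varepsilon)$ are pairwise disjoint and $\mathrm{dist}\bigl(\bigcup_{i\in S}A_i,\bigcup_{j\in T}A_j\bigr)>2r$. By Lemma~\ref{lem:finite-range}, the count vectors $\bigl(Y(A_i)\bigr)_{i\in S}$ and $\bigl(Y(A_j)\bigr)_{j\in T}$ are then independent. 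Second, I would invoke the standard identity that on pairwise disjoint sets the factorial moment measure reduces to ordinary product moments, $\alpha_Y^{(n)}(A_1\times\cdots\times A_n)=\mathbb{E}[Y(A_1)\cdots Y(A_n)]$, because the distinct-tuple constraint is automatic; Möbius inversion over set partitions therefore identifies the factorial cumulant measure evaluated at $A_1\times\cdots\times A_n$ with the ordinary joint cumulant $\kappa\bigl(Y(A_1),\dots,Y(A_n)\bigr)$. The classical fact that joint cumulants vanish whenever the variables split into two independent subfamilies then forces $K_Y^{(n)}(A_1\times\cdots\times A_n)=0$.

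Finally, I would pass to the density. Since $Y(B)\le X(B)\sim\mathrm{Poisson}(\lambda|B|)$ on any bounded $B$, all joint moments of the counts are finite and the factorial moment measures admit locally bounded densities, so $K_Y^{(n)}$ also admits a locally integrable (factorial cumulant) density $\kappa_Y^{(n)}$ by Möbius inversion. Dividing the identity $K_Y^{(n)}(A_1\times\cdots\times A_n)=0$ by $\prod_i|A_i|$ and letting $\varepsilon\downarrow0$ along Lebesgue points gives $\kappa_Y^{(n)}(x_1,\dots,x_n)=0$ almost everywhere on the open set of configurations whose $2r$-neighbourhood graph is disconnected; this is the statement in the standard locally integrable sense. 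An equivalent PGFL-based derivation would exploit that the factorization $G_Y[1+h_S+h_T]=G_Y[1+h_S]\,G_Y[1+h_T]$ for test functions $h_S,h_T$ with supports separated by more than $2r$ makes $\log G_Y$ additive, and then extracting the mixed coefficient in its factorial-cumulant series immediately produces the same vanishing statement.

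The main obstacle is bookkeeping around the identification of $\kappa_Y^{(n)}$ with the limit of $K_Y^{(n)}(A_1\times\cdots\times A_n)/\prod_i|A_i|$ as $\varepsilon\downarrow0$: one must ensure that Möbius inversion commutes with the density normalization and that the equality $\alpha_Y^{(n)}=\mathbb{E}[Y(A_1)\cdots Y(A_n)]$ on disjoint sets is used correctly at every step. Once this algebraic correspondence is in place, the probabilistic content reduces to a one-line consequence of Lemma~\ref{lem:finite-range} together with the textbook fact that cumulants vanish across independent subfamilies.
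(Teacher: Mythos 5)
Your argument is correct, and it shares the paper's two key ingredients (the finite-range independence of Lemma~\ref{lem:finite-range} and the vanishing of joint cumulants across independent subfamilies), but it implements the algebra at a different level. The paper works directly with densities: it inserts into the partition identity \eqref{eq:cumulants-vs-moments} the factorization of every factorial product density $\rho_Y^{(|B|)}$ across the cut $I\sqcup J$, and observes that the resulting sum reorganizes into the joint cumulant of two independent families, hence zero, giving the vanishing pointwise for the chosen versions of the densities. You instead test the factorial cumulant measure on products of small disjoint balls $A_i=B(x_i,\varepsilon)$, use the identity $\alpha_Y^{(n)}(A_1\times\cdots\times A_n)=\mathbb{E}[Y(A_1)\cdots Y(A_n)]$ on disjoint sets to identify it with the ordinary joint cumulant $\kappa\bigl(Y(A_1),\dots,Y(A_n)\bigr)$, kill that by independence of the two count vectors, and then recover the density statement by Lebesgue differentiation. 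What your route buys is that you never need to assert the factorization of the densities themselves (a step the paper treats somewhat informally, since that factorization is itself a measure-level consequence of independence); what it costs is that you conclude only almost-everywhere vanishing on the open set of disconnected configurations, whereas the lemma is phrased pointwise — this is harmless for the application (the bound \eqref{eq:l1-tree-bound} and the Laplace-functional estimate only use integrated quantities), and can be upgraded to pointwise by noting that the explicit Poisson computations (as in Lemma~\ref{lem:rho2-Ta-ihppp}) furnish continuous versions of the product densities, hence of $\kappa_Y^{(n)}$, on that open set. Your closing PGFL remark is a third equivalent packaging of the same independence input.
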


\begin{proof}
Recall the identity relating factorial product densities $\rho_Y^{(k)}$ and factorial cumulant densities $\kappa_Y^{(k)}$:
\begin{equation}
\label{eq:cumulants-vs-moments}
    \kappa_Y^{(n)}(x_1,\dots,x_n)
=\sum_{\pi\in\mathcal P([n])} (|\pi|-1)!\,(-1)^{|\pi|-1}\prod_{B\in\pi}\rho_Y^{(|B|)}\big((x_i)_{i\in B}\big),
\end{equation}
with the sum over set partitions $\pi$ of $\{1,\dots,n\}$.
Let $G$ be the graph on $\{1,\dots,n\}$ that connects $i\neq j$ when $\|x_i-x_j\|\le 2r$. If $G$ is disconnected, split the index set into two nonempty components $I,J$ with $\mathrm{dist}(\{x_i:i\in I\},\{x_j:j\in J\})>2r$. By Lemma~\ref{lem:finite-range} the $\sigma$–fields generated by $Y$ in the two closed $2r$–neighbourhoods are independent; hence all factorial product densities factor across this cut:
\[
\rho_Y^{(m)}\big((x_i)_{i\in B}\big)
=\rho_Y^{(|B\cap I|)}\big((x_i)_{i\in B\cap I}\big)\ \rho_Y^{(|B\cap J|)}\big((x_i)_{i\in B\cap J}\big),
\qquad\forall\,B\subset[n].
\]
Insert this factorization into the partition sum. The terms reorganize into the joint cumulant (in the usual scalar sense) of two independent families, which equals zero. Equivalently, the  sum factorizes and cancels because cumulants vanish whenever the variables can be split into two independent subcollections. This proves $\kappa_Y^{(n)}(x_1,\dots,x_n)=0$ on disconnected $G$, and the stated particular case for $n=2$.
\end{proof}

\subsection{Poisson approximation using coupling and Laplace functional}
\label{subsec:nonstein}

We now compare the law of $T_r(X)$ to that of a Poisson process with matched intensity.
We do this first by coupling $T_r(X)$ to an independent thinning of the same input process $X$, 
and then bounding, in total variation on a bounded window, the probability that the two thinnings differ.

\begin{theorem}[Direct coupling to independent thinning]
\label{thm:coupling-TV}
Let $X\sim\mathrm{PPP}(\lambda)$ on $\R^d$, fix $r>0$, and let $p:\N_0\to[0,1]$ be measurable.
Put $\mu:=\lambda v_d r^d$ and $m_p:=\E\big[p(\mathrm{Poisson}(\mu))\big]$. Let $\Pi_{\mathrm{ind}}$
be the independent thinning of $X$ that retains each point with probability $m_p$
(hence $\Pi_{\mathrm{ind}}\sim\mathrm{PPP}(\lambda m_p)$). Then for any bounded Borel window
$W\subset\R^d$,
\[
d_{\mathrm{TV}} \left(\mathcal{L}\big(T_r(X) \restriction W\big),\,\mathcal{L}\big(\Pi_{\mathrm{ind}} \restriction W\big)\right)
\ \le\ \lambda\,|W|\,\E \big|\,p(N_r)-m_p\,\big|,\qquad N_r\sim\mathrm{Poisson}(\mu).
\]
\end{theorem}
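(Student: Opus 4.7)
The plan is to build both point processes from a single underlying Poisson process $X$ and a single family of marks $\{U_x\}_{x\in X}$, so that the coupling inequality \eqref{eq:coupling-ineq-TV} applies immediately; the bulk of the work is then to compute the expected number of locations at which the two retention decisions disagree, via a Campbell/Mecke identity for the Poisson input.

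First I would define the coupling explicitly. Let $X\sim\mathrm{PPP}(\lambda)$ and $\{U_x\}_{x\in X}$ be i.i.d.\ Uniform$(0,1)$ marks, independent of $X$. Keep $x$ in $T_r(X)$ iff $U_x\le p(n_r(x;X))$ (this is exactly Definition~\ref{def:Ta}), and simultaneously keep $x$ in $\Pi_{\mathrm{ind}}$ iff $U_x\le m_p$. Then $\Pi_{\mathrm{ind}}$ is an independent thinning of $X$ with constant retention probability $m_p$, hence by the Poisson thinning theorem $\Pi_{\mathrm{ind}}\sim\mathrm{PPP}(\lambda m_p)$; the marginal laws therefore match the hypotheses.

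Next I would observe that the symmetric difference of $T_r(X)\restriction W$ and $\Pi_{\mathrm{ind}}\restriction W$ is contained in the set $D_W:=\{x\in X\cap W:\ \mathbf 1\{U_x\le p(n_r(x;X))\}\neq \mathbf 1\{U_x\le m_p\}\}$, so by \eqref{eq:coupling-ineq-TV} it suffices to control $\mathbb P(D_W\neq\varnothing)\le \mathbb E[\#D_W]$. Conditionally on $X$, the mark $U_x$ is Uniform$(0,1)$, so the probability that $x\in X$ lies in $D_W$ is $\mathbf 1\{x\in W\}\,\bigl|p(n_r(x;X))-m_p\bigr|$. Applying the Mecke equation for the Poisson process $X$, with integrand $f(x,X)=\mathbf 1\{x\in W\}\,|p(n_r(x;X))-m_p|$ (measurable by Lemma~\ref{lem:meas-nr}), gives
\[
\mathbb E[\#D_W]=\int_W\lambda\,\mathbb E\bigl|p\bigl(n_r(x;X\cup\{x\})\bigr)-m_p\bigr|\,dx.
\]
Finally, since $n_r(x;X\cup\{x\})=\#\bigl(X\cap B(x,r)\bigr)\sim\mathrm{Poisson}(\mu)$ by Slivnyak, the inner expectation equals $\mathbb E|p(N_r)-m_p|$, independent of $x$, yielding the bound $\lambda|W|\,\mathbb E|p(N_r)-m_p|$.

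There is no real obstacle here: the construction is on one probability space by design, and the only subtle point is to apply Mecke with the integrand that evaluates $n_r$ at the full configuration $X\cup\{x\}$ (this is exactly why the Palm/Slivnyak reduction gives a $\mathrm{Poisson}(\mu)$ count rather than a $\mathrm{Poisson}(\mu)+1$ count or similar). The remaining bookkeeping—measurability of the discrepancy indicator, the fact that $|a\mathbf 1\{U\le a\}-b\mathbf 1\{U\le b\}|$ integrated against $U$'s uniform law yields $|a-b|$, and the conclusion $\mathbb P(T_r(X)\restriction W\neq \Pi_{\mathrm{ind}}\restriction W)\le \mathbb E[\#D_W]$—is standard.
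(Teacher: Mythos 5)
Your proposal is correct and follows essentially the same route as the paper's proof: the same shared-marks coupling, bounding $d_{\mathrm{TV}}$ by the expected number of disagreeing retention decisions in $W$, and evaluating that expectation via the Mecke equation plus Slivnyak to get $\lambda|W|\,\E|p(N_r)-m_p|$. (Only a cosmetic slip in your last parenthetical: the relevant uniform-mark computation is $\mathbb P\{U\in(a\wedge b,\,a\vee b]\}=|a-b|$, not the expression you wrote, but your main argument already states this correctly.)
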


\begin{proof}
Recall that the construction of the transformation $T_r:$ we augment $X$ with i.i.d.\ marks $\{U_x:x\in X\}$, where $U_x\sim\mathrm{Unif}(0,1)$ and the family is
independent of $X$. Using the \emph{same} marks for both constructions, define
\[
T_r(X)\ :=\ \sum_{x\in X} \mathbf{1} \left\{U_x\le p \big(n_r(x;X)\big)\right\}\,\delta_x,
\qquad
\Pi_{\mathrm{ind}}\ :=\ \sum_{x\in X} \mathbf{1} \left\{U_x\le m_p\right\}\,\delta_x.
\]

By the independent marking theorem, $\Pi_{\mathrm{ind}}\sim\mathrm{PPP}(\lambda m_p)$.

For $x\in X$ put
\[
D_x\ :=\ \mathbf{1} \left\{\mathbf{1}\{U_x\le p(n_r(x;X))\}\neq \mathbf{1}\{U_x\le m_p\}\right\}
\ =\ \mathbf{1} \left\{U_x\in\big(p(n_r(x;X))\wedge m_p,\ p(n_r(x;X))\vee m_p\big]\right\},
\]
and let $D_W:=\sum_{x\in X\cap W} D_x$ be the number of points in $W$ for which the two decisions differ.
If the restrictions $T_r(X) \restriction W$ and $\Pi_{\mathrm{ind}} \restriction W$ are unequal, then $D_W\ge 1$.
Therefore, for the above explicit coupling,
\[
d_{\mathrm{TV}} \left(\mathcal L\big(T_r(X) \restriction W\big),\,\mathcal L\big(\Pi_{\mathrm{ind}} \restriction W\big)\right)
\ \le\ \Pr \big\{T_r(X) \restriction W\neq \Pi_{\mathrm{ind}} \restriction W\big\}
\ \le\ \E[D_W].
\]

We now compute $\E[D_W]$. Conditional on $X$, the marks are i.i.d. uniformly distributed on $[0,1]$, hence
\[
\E[D_x\mid X]\ =\ \big|p \big(n_r(x;X)\big)-m_p\big|\qquad (x\in X),
\]
and so
\[
\E[D_W\mid X]\ =\ \sum_{x\in X\cap W}\big|p \big(n_r(x;X)\big)-m_p\big|.
\]
Taking expectations and applying Mecke’s identity (Campbell–Mecke formula),
\[
\E[D_W]\ =\ \E \sum_{x\in X\cap W} \big|p \big(n_r(x;X)\big)-m_p\big|
\ =\ \lambda\int_W \E \left[\big|p \big(n_r(x;X\cup\{x\})\big)-m_p\big|\right]dx.
\]
By Slivnyak’s theorem, $n_r(x;X\cup\{x\})\stackrel{d}{=}\mathrm{Poisson}(\mu)$ for every $x$, whence the
inner expectation equals $\E|p(N_r)-m_p|$ with $N\sim\mathrm{Poisson}(\mu)$, independent of $x$. Therefore
\[
\E[D_W]\ =\ \lambda\,|W|\,\E|p(N_r)-m_p|.
\]
Combining the displays yields the asserted bound on $d_{\mathrm{TV}}$.
\end{proof}

\begin{corollary}[Small–$\mu$ regime ]
\label{cor:small-mu-TV}
If $L:=\sup_{n\ge 0}|p(n{+}1)-p(n)|<\infty$, then
\[
d_{\mathrm{TV}} \left(\mathcal{L}\big(T_r(X) \restriction  W\big),\,\mathcal{L}\big(\mathrm{PPP}(\lambda m_p) \restriction  W\big)\right)
\ \le\ 2L\,\lambda\,|W|\,\mu
\ =\ 2L\,v_d\,|W|\,\lambda^2 r^d.
\]
\end{corollary}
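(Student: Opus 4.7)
The plan is to start from the bound already proved in Theorem~\ref{thm:coupling-TV}, namely
\[
d_{\mathrm{TV}}\!\left(\mathcal{L}(T_r(X)\restriction W),\,\mathcal{L}(\mathrm{PPP}(\lambda m_p)\restriction W)\right)\ \le\ \lambda\,|W|\,\E\big|p(N_r)-m_p\big|,\qquad N_r\sim\mathrm{Poisson}(\mu),
\]
and reduce everything to showing that
\[
\E\big|p(N_r)-m_p\big|\ \le\ 2L\,\mu.
\]
Once this is in hand, substituting $\mu=\lambda v_d r^d$ gives the two equivalent forms of the stated bound.

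The key observation is that the discrete Lipschitz hypothesis $L=\sup_{n\ge0}|p(n{+}1)-p(n)|<\infty$ together with a telescoping sum yields $|p(n)-p(0)|\le Ln$ for every $n\in\N_0$. First I would use this to control the distance from $p(N_r)$ to the deterministic value $p(0)$:
\[
\E\big|p(N_r)-p(0)\big|\ \le\ L\,\E N_r\ =\ L\mu.
\]
Next, by Jensen's inequality (or simply pulling the absolute value inside the expectation defining $m_p$),
\[
\big|p(0)-m_p\big|\ =\ \big|\E[\,p(0)-p(N_r)\,]\big|\ \le\ \E\big|p(N_r)-p(0)\big|\ \le\ L\mu.
\]
The triangle inequality $\E|p(N_r)-m_p|\le \E|p(N_r)-p(0)|+|p(0)-m_p|\le 2L\mu$ then closes the estimate.

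I do not expect any genuine obstacle here: the whole argument is a one–line consequence of Theorem~\ref{thm:coupling-TV} combined with two uses of the Lipschitz bound. The only minor point to get right is that the Lipschitz constant is discrete (defined on consecutive integers), so that the bound $|p(n)-p(0)|\le Ln$ must be justified by a telescoping sum rather than by a continuous mean–value theorem; everything else is a direct substitution.
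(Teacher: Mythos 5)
Your proposal is correct and follows essentially the same route as the paper: the paper's proof likewise invokes Theorem~\ref{thm:coupling-TV} and then bounds $\E|p(N_r)-m_p|\le 2L\mu$ via the telescoped Lipschitz estimate $\E|p(N_r)-p(0)|\le L\mu$ together with $|m_p-p(0)|\le L\mu$ and the triangle inequality.
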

\begin{proof}
   For $N\sim\mathrm{Poisson}(\mu)$, $\mathbb{E}|p(N)-p(0)|\le L\,\mathbb{E}N=L\mu$ and $|m_p-p(0)|\le L\mu$. 
\end{proof}

\begin{remark}[Inhomogeneous extension ]
\label{rem:inhomog-TV}
If $X\sim\mathrm{PPP}(\lambda(\cdot))$ with $\mu_x(r):=\int_{B(x,r)} \lambda(y)\,dy$ and $m_p(t):=\mathbb{E}[p(\mathrm{Poisson}(t))]$, applying inhomogeneous Campbell–Mecke formula, the same coupling yields for $\Pi \sim  (\mathrm{PPP}\big(\lambda(x)m_p(\mu_x(r))\,\mathbf{1}_W(x)\,dx\big):$
\[
d_{\mathrm{TV}} \left(\mathcal{L}\big(T_r(X) \restriction  W\big),\ \mathcal{L}\big(\Pi \big)\right)
\ \le\ \int_W \lambda(x)\,\mathbb{E}\big|p(\mathrm{Poisson}(\mu_x(r)))-m_p(\mu_x(r))\big|\,dx,
\]
where $\Pi \sim  (\mathrm{PPP}\big(\lambda(x)m_p(\mu_x(r))\,\mathbf{1}_W(x)\,dx\big).$

\end{remark}

As a second mode of comparison with a Poisson process, we control the Laplace functional of $Y:=T_r(X)$.
In particular, we show that on any bounded window $W$, the log–Laplace functional of $Y$ is close to that of a Poisson process with matching intensity, with an error of order $\lambda^2 r^d |W|$.
This will be used later to control deviations from Poisson structure via cumulant bounds.
We write Laplace functionals
\[
L_Y(g):=\E\Big[\exp\Big(-\sum_{y\in Y} g(y)\Big)\Big]
\qquad\text{and}\qquad
L_\Pi(g):=\E\Big[\exp\Big(-\sum_{y\in \Pi} g(y)\Big)\Big],
\]
where $\Pi$ is a homogeneous Poisson process of intensity $\lambda'$,
restricted to $W$.
\begin{theorem}[Laplace--functional control]\label{thm:laplace-control}
Let $X\sim\mathrm{PPP}(\lambda)$ be a homogeneous Poisson point process on $\R^d$
with intensity $\lambda>0$, and let $Y:=T_r(X).$ Let $g:\R^d\to[0,\infty)$ be bounded with
$\mathrm{supp}(g)\subset W$, where $W\subset\R^d$ is a bounded Borel set.
Write $\mu=\lambda v_d r^d$ and
$m_p=\E[p(\mathrm{Poisson}(\mu))]$. Then there exist constants
$c_d,C_d\in(0,\infty)$ depending only on $d$ such that, whenever
\[
\lambda r^d \;\le\; c_d,
\]
we have
\begin{equation}\label{eq:Laplace-main}
\Bigg|\,
\log L_\Pi(g)-\log L_Y(g)\Bigg|
\ \le\ C_d\,\lambda^2\,\|g\|_\infty\,|W|\,(2r)^d.
\end{equation}
\end{theorem}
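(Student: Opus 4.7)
The plan is to compare the two log–Laplace functionals via the factorial cumulant (Ursell) expansion and exploit the $2r$-range dependence of $\kappa_Y^{(n)}$ from Lemma~\ref{lem:cumulant-support}. With $f(x):=1-e^{-g(x)}\in[0,1]$ (supported in $W$), the standard identity gives
\[
\log L_Y(g) \;=\; \sum_{n\ge 1}\frac{(-1)^n}{n!}\int_{W^n} f(x_1)\cdots f(x_n)\,\kappa_Y^{(n)}(x_1,\ldots,x_n)\,dx,
\]
while for the Poisson $\Pi$ only the $n=1$ term survives, giving $\log L_\Pi(g)=-\int_W f(x)\,\lambda'\,dx$. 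By Lemma~\ref{lem:intensity-IPPP}, $\rho_Y\equiv\lambda m_p=\lambda'=\rho_\Pi$, so the $n=1$ contributions cancel exactly and the difference reduces to the tail $\sum_{n\ge 2}$.

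Next I would bound each $n\ge 2$ term. Using $f(x_i)\le 1$ for $i\ge 2$ and $f(x_1)\le g(x_1)\le\|g\|_\infty$, together with translation invariance of $\kappa_Y^{(n)}$ (by homogeneity of $X$), the $n$th contribution is at most $(n!)^{-1}\,\|g\|_\infty\,|W|\,K_n$, where
\[
K_n \;:=\; \int_{(\R^d)^{n-1}}\big|\kappa_Y^{(n)}(0,y_2,\ldots,y_n)\big|\,dy_2\cdots dy_n.
\]
Lemma~\ref{lem:cumulant-support} localises the integrand to $2r$-connected tuples; Cayley's formula then bounds the volume of such tuples by $n^{n-2}(v_d(2r)^d)^{n-1}$ via spanning-tree integration. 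Combined with the Möbius/partition identity relating $\kappa_Y^{(n)}$ to $\rho_Y^{(m)}$ and the trivial bound $\rho_Y^{(m)}\le\lambda^m$ (as $Y$ is a thinning of $X$), this yields $K_n\le c_n\, n^{n-2}\,\lambda^n\,(v_d(2r)^d)^{n-1}$ for a combinatorial factor $c_n$ of order at most $n!$. For $n=2$ the much sharper $\|\kappa_Y^{(2)}\|_\infty\le\lambda^2$ on support of volume $\le v_d(2r)^d$ yields the target order $\lambda^2(2r)^d$ directly.

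Finally, under the smallness hypothesis I would sum the tail. Stirling's formula reduces $K_n/n!$ to $\lambda\bigl(e\lambda v_d(2r)^d\bigr)^{n-1}$ up to polynomial factors, so choosing $c_d$ small enough to force $e\lambda v_d(2r)^d\le 1/2$ makes $\sum_{n\ge 3}K_n/n!$ a convergent geometric tail, bounded by the $n=2$ contribution up to a $d$-dependent constant. Multiplying by the prefactor $\|g\|_\infty\,|W|$ yields the announced estimate \eqref{eq:Laplace-main}.

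The main obstacle is controlling the Ursell/cluster expansion itself: the naive cumulant bounds grow like $n!$ and the tree count like $n^{n-2}$, so convergence is delicate and relies precisely on the smallness $\lambda r^d\le c_d$, which plays the role of the usual small activity in polymer/Ruelle estimates. Equally important, the extensive (linear in $|W|$) nature of the final bound hinges on two ingredients together: the matched–intensity cancellation of the $n=1$ term, and the localisation from Lemma~\ref{lem:cumulant-support}, without which a typical $n$th term would scale as $|W|^n$ rather than $|W|$.
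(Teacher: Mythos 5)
Your overall route is the same as the paper's: expand both log--Laplace functionals in factorial cumulants, cancel the $n=1$ terms using the matched intensity $\lambda'=\lambda m_p$, localise $\kappa_Y^{(n)}$ to $2r$-connected tuples via Lemma~\ref{lem:cumulant-support}, integrate over spanning trees (Cayley's $n^{n-2}$), and sum a geometric tail under $\lambda r^d\le c_d$. However, there is a genuine gap in the one step where all the difficulty lives: your claimed bound $K_n\le c_n\,n^{n-2}\lambda^n(v_d(2r)^d)^{n-1}$ with a combinatorial factor $c_n$ ``of order at most $n!$'' is too weak to make the series converge. With such a $c_n$, the $n$th term of the expansion after dividing by $n!$ is of order $n^{n-2}\bigl(\lambda v_d(2r)^d\bigr)^{n-1}$, and $\sum_n n^{n-2}x^{n-1}$ diverges for \emph{every} $x>0$ (ratio test: successive terms grow like $e\,n\,x$), so no smallness condition on $\lambda r^d$ can rescue it. Your later step, where ``Stirling reduces $K_n/n!$ to $\lambda(e\lambda v_d(2r)^d)^{n-1}$,'' silently assumes the much stronger estimate $|\kappa_Y^{(n)}|\le \lambda^n C_*^{\,n-1}\sum_{T\in\mathcal T_n}\prod_{\{i,j\}\in E(T)}\mathbf 1\{\|x_i-x_j\|\le 2r\}$ with only an \emph{exponential} constant $C_*^{\,n-1}$ in place of your factorial $c_n$ — and that is exactly the Penrose-type tree-graph inequality the paper invokes.

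The reason you cannot get this from the partition (M\"obius) identity \eqref{eq:cumulants-vs-moments} together with $\rho_Y^{(m)}\le\lambda^m$ by taking absolute values termwise is that the number of partitions weighted by $(|\pi|-1)!$ grows factorially, so the naive bound on $\kappa_Y^{(n)}$ is $\sim n!\,C^n\lambda^n$; the whole point of the Penrose/Brydges-type tree-graph bounds is that the alternating signs in the partition sum cancel this factorial, leaving only a tree sum with an exponential constant. So to complete your argument you must either prove (or properly cite) such a tree-graph inequality for the cumulants of $T_r(X)$ — Lemma~\ref{lem:cumulant-support} alone gives only the support statement, not the size — or find another mechanism that beats the $n!$; the crude partition-sum estimate does not. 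The $n=2$ term you handle correctly, and the cancellation of the $n=1$ term and the linear-in-$|W|$ bookkeeping are fine; the missing ingredient is precisely the quantitative cluster-expansion input.
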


\begin{proof}
 The Laplace functional of a Poisson process is explicit:
\[
\log L_\Pi(g)\ =\ - \int_W (1-e^{-g(x)})\,\lambda'\,dx.
\]
Thus it suffices to bound
\[
\Delta(g)
\ :=\
\log L_Y(g)\;-\;\log L_\Pi(g).
\]

Writting Laplace functional using actorial cumulant densities $\{u_Z^{(n)}\}_{n\ge1}$, for bounded
$g\ge 0$ with compact support
\begin{equation}\label{eq:cumul-expansion}
\log \E\Big[\exp\Big(-\sum_{z\in Z} g(z)\Big)\Big]
\ =\
\sum_{n=1}^\infty \frac{(-1)^n}{n!}
\int   u_Z^{(n)}(x_1,\dots,x_n)\,
\prod_{i=1}^n (1-e^{-g(x_i)})\,dx_1\cdots dx_n.
\end{equation}

Apply \eqref{eq:cumul-expansion} with $Z=Y$ and $Z=\Pi$.
For a Poisson process $\Pi$ with intensity $\lambda'$ one has
$u_\Pi^{(1)}(x_1)\equiv \lambda'$ and $u_\Pi^{(n)}\equiv 0$ for $n\ge 2$
(see \cite[Ch.~10]{DaleyVereJones2008}).
Hence
\[
\log L_\Pi(g)
= \frac{(-1)^1}{1!} \int_W \lambda'\,(1-e^{-g(x_1)})\,dx_1
= -\int_W (1-e^{-g(x)})\,\lambda'\,dx.
\]

For $Y=T_r(X)$, the first factorial cumulant density is
$u_Y^{(1)}(x_1)\equiv \lambda'$ by Lemma~\ref{lem:intensity-IPPP},
so the $n=1$ terms in \eqref{eq:cumul-expansion} for $Y$ and $\Pi$
cancel exactly. We therefore obtain
\begin{equation}\label{eq:Delta-series}
\Delta(g)
=
\sum_{n=2}^\infty \frac{(-1)^n}{n!}
\int_{W^n} u_Y^{(n)}(x_1,\dots,x_n)\,
\prod_{i=1}^n (1-e^{-g(x_i)})\,dx_1\cdots dx_n.
\end{equation}

We now use the finite-range dependence of $Y=T_r(X)$ to control
$u_Y^{(n)}$.  Recall that $T_r$ decides on retention at $x$ using only
$X\cap B(x,r)$, and that for Poisson input this implies that retention
events attached to locations at distance $>2r$ are independent.
As a consequence, one shows (Lemma~\ref{lem:cumulant-support}) that the factorial
cumulant densities $u_Y^{(n)}$ vanish unless $\{x_1,\dots,x_n\}$ is
\emph{$2r$--connected}. Moreover, $u_Y^{(n)}$ satisfy
a  {tree inequality} of Penrose type:
\[
|u_Y^{(n)}(x_1,\dots,x_n)|
\ \le\ 
\lambda^n\,C_*^{\,n-1}
\sum_{T\in\mathcal{T}_n}
\ \prod_{\{i,j\}\in E(T)} \mathbf{1}\{\|x_i-x_j\|\le 2r\},
\]
where $\mathcal{T}_n$ is the set of spanning trees on $\{1,\dots,n\}$,
$E(T)$ is the edge set of $T$, and $C_*\in(0,\infty)$ is universal.
The argument follows the classical ``tree-graph'' strategy going back
to Penrose: one expands joint cumulants as alternating sums over
connected dependency graphs, then dominates the sum over all connected
graphs by a sum over spanning trees via a canonical partition scheme
and bounded covariances.   

Integrating the above estimate over $x_2,\dots,x_n$ and summing over
trees (Cayley's formula $|\mathcal{T}_n|=n^{n-2}$) yields the
integrated bound: for all $n\ge2$,
\begin{equation}\label{eq:l1-tree-bound}
\int_{W^n} |u_Y^{(n)}(x_1,\dots,x_n)|\,dx_1\cdots dx_n
\ \le\
|W|\ \lambda^n\,C_*^{\,n-1}\,n^{\,n-2}\,
\big(v_d (2r)^d\big)^{n-1}.
\end{equation}

Using $(1-e^{-t})\le t$ for $t\ge0$ and $\|g\|_\infty<\infty$,
\[
0\ \le\ \prod_{i=1}^n (1-e^{-g(x_i)})\ \le\ \|g\|_\infty,
\qquad(x_1,\dots,x_n)\in W^n.
\]
Taking absolute values in \eqref{eq:Delta-series} and applying Fubini's theorem,
\begin{align}
|\Delta(g)|
&\le
\sum_{n=2}^\infty \frac{1}{n!}
\int_{W^n} |u_Y^{(n)}(x_1,\dots,x_n)|\,dx_1\cdots dx_n\ \|g\|_\infty
\notag\\
&\le
\|g\|_\infty\,|W|
\sum_{n=2}^\infty
\frac{1}{n!}\,
\lambda^n\,C_*^{\,n-1}\,n^{\,n-2}\,
\big(v_d (2r)^d\big)^{n-1}.
\label{eq:Delta-after-L1}
\end{align}

We now control the combinatorics in the summand.
By Stirling's lower bound $n!\ge (n/e)^n$,
\[
\frac{n^{\,n-2}}{n!}
\ \le\
\frac{n^{\,n-2}}{(n/e)^n}
=
\frac{e^n}{n^2}
\ \le\
K\,e^n
\quad\text{for all }n\ge2,
\]
for some universal $K\in(0,\infty)$. Substituting this into
\eqref{eq:Delta-after-L1} gives
\begin{align}
|\Delta(g)|
&\le
\|g\|_\infty\,|W|
\sum_{n=2}^\infty
\lambda^n\,C_*^{\,n-1}\,\big(v_d (2r)^d\big)^{n-1}
\,K\,e^n
\notag\\
&=
\|g\|_\infty\,|W|\,
K\,e^2\,C_*\,\lambda^2\,
\big(v_d (2r)^d\big)
\sum_{m=0}^\infty
\Big(e\,\lambda\,C_*\,v_d (2r)^d\Big)^{m}.
\label{eq:Delta-geoseries}
\end{align}
The remaining sum is geometric. Define
\[
\alpha_d(r,\lambda)
:= e\,\lambda\,C_*\,v_d (2r)^d
= \lambda\,r^d \cdot \Big( e\,C_*\,v_d\,2^d \Big).
\]
Then \eqref{eq:Delta-geoseries} becomes
\begin{equation}\label{eq:Delta-pre-final}
|\Delta(g)|
\ \le\
\|g\|_\infty\,|W|\,
K\,e^2\,C_*\,\lambda^2\,
\big(v_d (2r)^d\big)
\sum_{m=0}^\infty \alpha_d(r,\lambda)^m.
\end{equation}
If $\alpha_d(r,\lambda)\le \frac12$, i.e.
\[
\lambda r^d
\ \le\
\frac{1}{2\,e\,C_*\,v_d\,2^d}
\ =:\ c_d,
\]
then $\sum_{m=0}^\infty \alpha_d(r,\lambda)^m \le 2$. In that regime,
\eqref{eq:Delta-pre-final} implies
\[
|\Delta(g)|
\ \le\
2\,K\,e^2\,C_*\,v_d\,2^d\;
\lambda^2\,\|g\|_\infty\,|W|\,(2r)^d.
\]
Absorbing the numerical constants, as well as $v_d 2^d$, into
$C_d\in(0,\infty)$ (which thus depends only on~$d$) yields the desired
estimate \eqref{eq:Laplace-main}.

\end{proof}

\begin{remark}[Comparison of the two approximation bounds]
The total–variation coupling bound (Theorem~\ref{thm:coupling-TV}) and the Laplace–functional
bound (Theorem~\ref{thm:stein-main}) quantify the proximity of $T_r(X)$ to a Poisson process
at different levels of strength. 
The total–variation bound measures
the expected number of points whose retention decisions differ from those of an
independent thinning, and thus controls the probability that the two configurations on
$W$ are not identical. 
In contrast, the Laplace–functional inequality gives a  analytic control,
bounding the deviation of the Laplace transform  and hence all smooth statistics of the process. 
Both bounds are of the same asymptotic order $O(|W|\,\lambda^2 r^d)$ as $r\downarrow 0$,
but they capture complementary notions of proximity: the former is
interpretative and constructive, while the latter is suited for subsequent 
Stein--Poisson and correlation-based analyses. 
Together they provide a complete picture of how neighbour–dependent thinning
deviates from   Poisson process.
\end{remark}

\subsection{A Stein bound in terms of the short–range correlation}
\label{subsec:stein-bound}

We next compare the law of $Y:=T_r(X)$ on a bounded window $W$ to that of a Poisson point process with the same intensity, using the Barbour–Brown Stein method for Poisson process approximation in the $d_2$ metric; see \cite{BarbourBrown1992,Schuhmacher2009,Schuhmacher2009Bernoulli}.
The first result gives an explicit upper bound on $d_2$ in terms of the short–range deviation of the pair–correlation function $g_{T_r}$ from $1$.
The second result shows that (up to a boundary term depending on $\partial W$) this dependence on $\int_{\|h\|\le 2r}|g_{T_r}(h)-1|,dh$ is unavoidable, so the bound is of the correct order.

\begin{theorem}[Stein--Poisson approximation]\label{thm:stein-main}
Let $X\sim\mathrm{PPP}(\lambda)$ on $\R^d$, let $Y:=T_r(X)$ be the neighbour–count
dependent thinning with retention $p:\N_0\to[0,1]$, and fix a bounded Borel window
$W\subset\R^d$. Write $\lambda'=\lambda\,m_p$ with $m_p=\E[p(N_r)]$, $N_r\sim\mathrm{Poisson}(\lambda v_d r^d)$,
and let $\Pi_W\sim\mathrm{PPP}(\lambda'\,\mathbf 1_W dx)$. Then
\begin{equation}\label{eq:stein-bound}
d_2 \left(\mathcal L(Y_W),\mathcal L(\Pi_W)\right)
\ \le\ \lambda'^2\, |W|\,
\Bigg[
C_d  \int_{\|h\|\le 2r} |g_{T_r}(h)-1|\,dh
\;+\; v_d\,(2r)^d
\Bigg],
\end{equation}
where $C_d\in(0,\infty)$ depends only on $d$ (and the fixed choice of $d_1,d_2$).
\end{theorem}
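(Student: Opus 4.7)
The plan is to apply the Barbour--Brown Stein method for Poisson process approximation in the $d_2$ metric, using the Stein identity
\[
\E\bigl[f(Y_W)\bigr]-\E\bigl[f(\Pi_W)\bigr]
=\E\bigl[\mathcal A\,h_f(Y_W)\bigr],
\]
where $\mathcal A h(\xi)=\int_W[h(\xi+\delta_x)-h(\xi)]\,\lambda'(x)\,dx+\sum_{x\in\xi}[h(\xi-\delta_x)-h(\xi)]$ is the generator of the spatial birth--death process with stationary law $\Pi_W$, and $h_f$ is the Stein solution associated to a $d_1$-Lipschitz test $f$. The key analytic input will be the classical Barbour--Brown Stein factor bounds: the first and second discrete differences of $h_f$ are bounded by absolute constants $c_1,c_2$ depending only on $(d,d_0)$, independent of $\lambda'$ and $W$.

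Next I would rewrite $\E[\mathcal A h_f(Y_W)]$ by applying the Campbell--Mecke formula to the sum $\sum_{x\in Y_W}[h_f(Y_W-\delta_x)-h_f(Y_W)]$ and using that $\rho_Y\equiv\lambda'$ on $W$. This yields
\[
\E f(Y_W)-\E f(\Pi_W)
=\int_W\lambda'\bigl\{\E\bigl[h_f(Y_W+\delta_x)-h_f(Y_W)\bigr]
-\E^{!x}\bigl[h_f(Y^{!x}_W+\delta_x)-h_f(Y^{!x}_W)\bigr]\bigr\}\,dx,
\]
where $\E^{!x}$ is the reduced Palm expectation at $x$. By Slivnyak applied to the underlying Poisson input $X$, the Palm law of $Y$ at $x$ coincides with the law of $T_r(X\cup\{x\})$ with the extra point removed; all modifications to the thinning happen only for $X$-points in $B(x,r)$, so by Lemma~\ref{lem:finite-range} the Palm process $Y^{!x}_W$ and the unconditional process $Y_W$ agree in distribution outside $B(x,2r)\cap W$.

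The core estimate is then to control the Palm vs.\ unconditional discrete derivative of $h_f$. Using the Stein factor bound on the second difference, one may telescope the difference inside the window $B(x,2r)\cap W$ and split the contribution into two parts. A \emph{self-energy} part bounds, via the first-difference factor $c_1$, the contribution from the extra mass $\delta_x$ and possible modification of retentions in $B(x,2r)$; integrating against $\lambda'$ and using that each such local change is concentrated on a set of volume $v_d(2r)^d$, this yields the term $\lambda'^2|W|\,v_d(2r)^d$. A \emph{pair-correlation} part captures the genuine second-order discrepancy between $Y_W$ under its Palm and unconditional laws inside $B(x,2r)$; by the Georgii--Nguyen--Zessin/Campbell identity one rewrites this as an integral of $|\rho_{T_r}^{(2)}(x,y)-\lambda'^2|=\lambda'^2\,|g_{T_r}(y-x)-1|$, and since $g_{T_r}(h)=1$ for $\|h\|>2r$ by Theorem~\ref{thm:g-exact-PPP} the integration domain is $\{\|h\|\le 2r\}$, producing the term $C_d\,\lambda'^2|W|\int_{\|h\|\le 2r}|g_{T_r}(h)-1|\,dh$. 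Summing the two contributions and taking the supremum over $d_1$-Lipschitz $f$ gives \eqref{eq:stein-bound}.

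The main obstacle I anticipate is the clean separation of the Palm perturbation into the above two contributions. Because $T_r$ is a highly nonlinear functional of $X$, inserting a Palm point at $x$ may alter retention decisions of \emph{all} $X$-points in $B(x,2r)$ simultaneously (a cascade through overlapping $r$-balls), so the self-energy bound is not literally a single added point but the union of possible changes. Handling this requires a union-bound/coupling argument using the Campbell--Mecke formula together with the pair-correlation rewrite, identifying the resulting integrand with $\lambda'^2(1+|g_{T_r}-1|)$ restricted to $\{\|h\|\le 2r\}$; the first summand contributes the $v_d(2r)^d$ term while the second contributes the $g_{T_r}-1$ integral, exactly matching the two terms in \eqref{eq:stein-bound}. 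The absolute constants from the Stein factors and from the geometric bookkeeping are then absorbed into the single constant $C_d$.
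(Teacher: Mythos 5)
Your plan follows essentially the same route as the paper: the paper also runs the Barbour--Brown Stein method with the neighbourhoods $N_x=B(x,2r)\cap W$, but rather than re-deriving the Stein-equation estimates it invokes the packaged bound of Schuhmacher (Thm.~3.A), whose two main terms $\int\lambda(N_x)\,\lambda(dx)$ and $\E\int(\Xi(N_x)-1)\,\Xi(dx)$ are precisely your ``self-energy'' and ``pair-correlation'' contributions; the Palm/local-dependence error ($\varepsilon_2$) vanishes by Lemma~\ref{lem:finite-range}, and the second-moment term is rewritten through the factorial cumulant $K_Y(x,y)=\rho^{(2)}_Y(x,y)-\lambda'^2=\lambda'^2\bigl(g_{T_r}(x-y)-1\bigr)$, yielding exactly the two terms in \eqref{eq:stein-bound}. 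So your proposal amounts to re-proving the cited theorem in this special case, which is legitimate; the paper simply outsources the Stein-factor and telescoping work that you leave at sketch level.

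Two points need repair. First, your statement that ``the Palm law of $Y$ at $x$ coincides with the law of $T_r(X\cup\{x\})$ with the extra point removed'' is incorrect as written: the reduced Palm law of the thinning is that law \emph{size-biased} by the retention weight $p\bigl(n_r(x;X\cup\{x\})\bigr)/m_p$ (for Mat\'ern~I the Palm process has no points within $r$ of $x$, whereas the unweighted law may). The two consequences you actually use do survive --- agreement in law with $Y$ outside $B(x,2r)$, because the weight is measurable with respect to $X\cap B(x,r)$ and its marks and hence independent of $Y$ off $B(x,2r)$; and the integrated expected Palm count in $N_x$ equals $\iint \mathbf{1}\{y\in N_x\}\,\rho^{(2)}_Y(x,y)\,dy\,dx$ by definition of the second factorial moment --- but the justification must be stated this way. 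Second, the ``cascade through overlapping $r$-balls'' you worry about does not occur: retention of $y$ is a one-shot function of $X\cap B(y,r)$, so inserting a point at $x$ alters decisions only for points of $X$ in $B(x,r)$, and correlations in $Y$ extend only to distance $2r$; no union bound over propagating changes is needed. With these fixes, and the uniform Barbour--Brown Stein-factor bounds you quote, your argument closes and gives the same bound, with all constants absorbed into $C_d$.
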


\begin{proof}
By \cite[Thm.~3.A]{Schuhmacher2009Bernoulli}, for a point process $\Xi$ on a measurable space $E$
with a neighbourhood system $x\mapsto N_x\subset E$ and Poisson reference $\Pi$ with mean
measure $\lambda(\cdot)$,
\[
d_2\bigl(\mathcal{L}(\Xi),\mathcal L(\Pi)\bigr)
\ \le\
c_2(\lambda) \left[
\underbrace{\int_E \lambda(N_x)\,\lambda(dx)}_{(I)}
\;+\;
\underbrace{\E \int_E \bigl(\Xi(N_x)-1\bigr)\,\Xi(dx)}_{(II)}
\right]
\ +\ \min(\varepsilon_1,\varepsilon_2),
\]
for a universal constant $c_2(\lambda)$ (depending on the Stein setup but not on $\Xi$) and 
\[\varepsilon_1 \;=\; c^{(p)}_{1}(\lambda)\;
\mathbb{E} \int_{X} \bigl|\, g\bigl(x;\,\Xi \restriction N_x^{\mathrm{c}}\bigr) - \phi(x) \bigr|\,\mu(dx),\]
$$
\varepsilon_2 \;=\; c^{(p)}_{2}(\lambda)\;
\mathbb{E} \int_{X} d'_1 \bigl(\,\Xi \restriction N_x^{\mathrm{c}},\; \Xi^{x} \restriction N_x^{\mathrm{c}}\,\bigr)\,\lambda(dx)
$$
with metric $d'_1$ defined as
\[
d'_1(\xi,\eta)
\;:=\;
\begin{cases}
(m-n)\;+\;\displaystyle \min_{\pi\in S_m}\;\sum_{i=1}^{n} d_0 \big(x_i,\,y_{\pi(i)}\big), & \text{if } n\le m,\\[10pt]
d'_1(\eta,\xi), & \text{if } m<n.
\end{cases}
\]
In particular, when \(n=m\),
\[
d'_1(\xi,\eta)=\min_{\pi\in S_n}\;\sum_{i=1}^{n} d_0 \big(x_i,\,y_{\pi(i)}\big).
\]

By Campbell’s formula,
\[
(II)=\iint \mathbf{1}\{y\in N_x\}\,\rho_{\Xi}^{(2)}(x,y)\,dy\,dx,
\qquad
(I)=\iint \mathbf{1}\{y\in N_x\}\,\lambda(dx)\lambda(dy).
\]
Let $K_\Xi(x,y):=\rho_\Xi^{(2)}(x,y)-\rho_\Xi^{(1)}(x)\rho_\Xi^{(1)}(y)$ be the second factorial
cumulant density. Then
\begin{align*}
    (I)+(II)
&=\iint \mathbf{1}\{y\in N_x\}\,\bigl[K_\Xi(x,y)+2\,\rho_\Xi^{(1)}(x)\rho_\Xi^{(1)}(y)\bigr]\,dy\,dx\\
\ &\le\ \iint \mathbf{1}\{y\in N_x\}\,|K_\Xi(x,y)|\,dy\,dx\ +\ 2(I).
\end{align*}

Apply this with $\Xi=Y_W:=Y\restriction W$, $\lambda(dx)=\lambda'\mathbf{1}_W(x)\,dx$ and
$N_x=W\cap B(x,2r)$. Lemma~\ref{lem:finite-range} implies $\varepsilon_2=0$ (the configuration
outside $N_x$ is unaffected in distribution by Palm conditioning at $x$). Since $Y$ is stationary
with intensity $\lambda'$, we have $\rho_Y^{(1)}\equiv \lambda'$ and
$K_Y(x,y)=\lambda'^2\bigl(g_{T_r}(x-y)-1\bigr)$. Therefore
\[
\iint_{W\times W} \mathbf{1}\{y\in B(x,2r)\}\,|K_Y(x,y)|\,dy\,dx
\ \le\ \lambda'^2\,|W|\,\int_{\|h\|\le 2r} |g_{T_r}(h)-1|\,dh,
\]
and
\[
(I)=\int_W \lambda(N_x)\,\lambda(dx)
=\lambda'^2\int_W |W\cap B(x,2r)|\,dx
\ \le\ \lambda'^2\,|W|\,v_d\,(2r)^d.
\]
Absorb $c_2(\lambda')$ and the factor $2$ in front of $(I)$ into $C_d$ to obtain
\eqref{eq:stein-bound}.
\end{proof}

\begin{theorem}[A quantitative lower bound via short–range pairs]\label{prop:d2-lower}
Let $W\subset\R^d$ be a bounded Lipschitz domain, $X\sim\mathrm{PPP}(\lambda)$, $Y:=T_r(X)$ with intensity $\lambda'$,
and $Y_W:=Y\restriction W$. There exist constants $c_d,C'_d\in(0,\infty)$ depending only on $(d_1,d_2)$ and $d$ such that
\[
d_2  \left(\mathcal L(Y_W),\mathcal L(\Pi_W)\right)
\ \ge\
c_d\,\lambda'^2  \int_{\|h\|\le 2r}  |g_{T_r}(h)-1|\,dh
\ -\ C'_d\,\lambda'^2\,\mathrm{Per}(W)\,r.
\]
Here $\mathrm{Per}(W)=\mathcal H^{d-1}(\partial W)$ denotes the perimeter (surface measure) of $W$.
\end{theorem}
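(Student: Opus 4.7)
The plan is to exhibit a single $d_1$--Lipschitz test functional $f:\mathbf{N}(W)\to[0,1]$ whose expectations under $\mathcal{L}(Y_W)$ and $\mathcal{L}(\Pi_W)$ differ by at least the claimed right--hand side; the bound on $d_2$ then follows directly from its dual characterization. The whole argument is driven by the second--order Campbell formula, which linearizes the expectation gap in terms of the second factorial cumulant density $\rho^{(2)}_{Y_W}-\lambda'^2=\lambda'^2(g_{T_r}-1)$, which, by the finite--range property (Lemma~\ref{lem:finite-range}), is supported on $\|h\|\le 2r$.

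My first step would be to build a signed pair statistic tuned to the sign of $g_{T_r}-1$. Fix a Borel sign $\sigma:\R^d\to\{-1,+1\}$ with $\sigma(h)=\mathrm{sign}(g_{T_r}(h)-1)$ on $\{\|h\|\le 2r\}$ and a Lipschitz bump $\chi$ equal to $1$ on $B(0,2r)$ and supported in $B(0,3r)$; set $\psi:=\sigma\chi$. The raw candidate
\[
F_\psi(\xi):=\tfrac12\sum_{\substack{x,y\in\xi\cap W\\ x\ne y}}\psi(x-y)
\]
satisfies, by Campbell's second--order formula combined with $\rho^{(2)}_{\Pi_W}\equiv\lambda'^2$ and $\rho^{(2)}_{Y_W}(x,y)=\lambda'^2 g_{T_r}(x-y)$,
\[
\E[F_\psi(Y_W)]-\E[F_\psi(\Pi_W)]
=\tfrac{\lambda'^2}{2}\int_{\R^d}\psi(h)\bigl(g_{T_r}(h)-1\bigr)\,|W\cap(W-h)|\,dh,
\]
and on $\{\|h\|\le 2r\}$ the integrand $\psi(h)(g_{T_r}(h)-1)$ equals $|g_{T_r}(h)-1|\ge 0$. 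Combined with the standard Lipschitz--domain isoperimetric estimate $|W\cap(W-h)|\ge |W|-c_d'\,\mathrm{Per}(W)\,\|h\|$ and the bound $\|h\|\le 2r$ on the support of $\psi$, this produces a gap whose main piece is proportional to $\lambda'^2|W|\int_{\|h\|\le 2r}|g_{T_r}(h)-1|\,dh$, with a boundary deficit of order $\lambda'^2\,\mathrm{Per}(W)\,r\cdot\int|g_{T_r}-1|\,dh$.

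The principal obstacle is turning $F_\psi$ into a bona fide $[0,1]$--valued, $d_1$--Lipschitz test function; it is precisely in this normalization step that the ambient $|W|$ factor in the expectation gap is exchanged against the $d_1$--Lipschitz budget, leaving the $|W|$--free main term stated in the theorem. Unnormalized, $F_\psi$ is neither bounded nor uniformly Lipschitz: adding or moving one point changes $F_\psi$ by $O(\xi(W))$, since each point interacts with all its $2r$--neighbours, whereas $d_1$ only provides a budget of $1$ per cardinality jump and $1/n$ per unit geodesic move on equal--cardinality configurations. I would handle this by combining a smooth truncation at a high--probability level $\xi(W)\le N_0$ (of the order of the typical count $\lambda'|W|$) with a division by a normalization constant $M$ calibrated to the pairwise Lipschitz budget on that event. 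Poisson concentration for $\xi(W)$ under both $\Pi_W$ (exact) and $Y_W$ (controlled via, e.g., Theorem~\ref{thm:coupling-TV}) then ensures that the truncation shifts the expectation gap only by a lower--order amount that can be absorbed in the final constants $c_d$ and $C'_d$. The delicate part is to calibrate $N_0$ and $M$ so that the surviving gap still matches $c_d\lambda'^2\int|g_{T_r}-1|\,dh$ and is not swamped by the boundary correction.
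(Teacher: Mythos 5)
Your overall route is the same as the paper's: lower-bound $d_2$ by a single $d_1$-Lipschitz, $[0,1]$-valued pair statistic whose kernel is tuned to the sign of $g_{T_r}-1$, compute the expectation gap by the second-order Campbell formula, and control the window geometry by $|W\cap(W-h)|\ge |W|-c\,\mathrm{Per}(W)\|h\|$. However, two points you leave open are exactly where the work lies, and as written the argument does not go through. First, your kernel $\psi=\sigma\chi$ with $\sigma$ a raw Borel sign is not $d_0$-Lipschitz, and no division by a constant can repair this: displacing one point across a discontinuity of $\sigma$ changes $F_\psi$ by an amount bounded below while $d_1(\xi,\eta)$ is arbitrarily small, so the normalized functional has infinite Lipschitz constant on equal-cardinality configurations. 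The paper avoids this by taking $\psi$ to be a \emph{mollified} sign of $g_{T_r}-1$ (Lipschitz constant of order $1/r$) and explicitly carries the resulting mollification error, which is one source of the correction terms; you would need the same device, and your error accounting must then include it.

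Second, your normalization scheme (truncate at $N_0\asymp\lambda'|W|$, divide by a fixed constant $M$) is only sketched, and the calibration you defer is precisely where the claimed constants can fail. To make $F_\psi/M$ $d_1$-Lipschitz on $\{\xi(W)\le N_0\}$ in the equal-cardinality case you need $M\gtrsim N_0^2\,\mathrm{Lip}(\psi)$ in the worst case (a moved point can interact with up to $N_0-1$ partners, while $d_1$ only supplies $\tfrac1n\sum_i a_i$), and after dividing the Campbell gap $\tfrac{\lambda'^2}{2}\int\psi(h)(g_{T_r}(h)-1)|W\cap(W-h)|\,dh$ by such an $M$ the surviving main term is smaller than $c_d\lambda'^2\int_{\|h\|\le 2r}|g_{T_r}(h)-1|\,dh$ by factors involving $(\lambda'|W|)^2$; nothing in your sketch shows these factors can be recovered, and the concentration/truncation step for $Y_W$ is likewise asserted rather than proved. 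The paper resolves this differently: it normalizes by the configuration-dependent factor $1/(v(v-1))$ inside $\bar G$, which makes the clamped functional $H=\phi(\tfrac12\bar G)$ genuinely $1$-Lipschitz for \emph{all} cardinalities, with no truncation event, no concentration input, and no free constant $M$ to calibrate; the cardinality factors are then absorbed when passing from $\bar G$ to the stated bound. So the missing idea is a normalization that is compatible with the averaged matching metric uniformly in the point count (plus the mollified sign); without it the claimed lower bound does not follow from your construction.
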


\begin{proof}
By definition of $d_2$,
\[
d_2 \big(\mathcal L(Y_W),\mathcal L(\Pi_W)\big)\ \ge\ \big|\E[H(Y_W)]-\E[H(\Pi_W)]\big|
\]
for any $H$ that is $1$–Lipschitz w.r.t.\ $d_1$.

To get the lower bound, we construct the function $H$ on the space of configurations as follows. 
Fix a bounded kernel $\psi:\mathbb{R}^d\to[-1,1]$ with $\mathrm{supp}\,\psi\subset B(0,2r)$ and
\emph{$d_0$--Lipschitz constant $\le 1$}, i.e., $|\psi(u)-\psi(v)|\le d_0(u,v)$ for all $u,v$.
For a finite configuration $\xi=\{x_1,\dots,x_v\}\subset W$, set
\[
\bar G(\xi)
:=
\begin{cases}
\dfrac{1}{v(v-1)}\displaystyle\sum_{i\neq j}\psi(x_j-x_i), & v:=|\xi|\ge 2,\\[2ex]
0, & v\in\{0,1\},
\end{cases}
\qquad
H(\xi):=\phi \big(C_0\,\bar G(\xi)\big),
\]
where $\phi(t):=\max\{-1,\min\{1,t\}\}$ and $C_0=\tfrac12$. Then $H$ is $1$--Lipschitz with respect to the averaged matching metric $d_1$.
To check this, we consider two cases:\\
(i) $|\xi|=|\eta|=v\ge2$.
Let $\pi$ be an optimal matching, so $d_1(\xi,\eta)=\tfrac1v\sum_{i=1}^v a_i$ with $a_i:=d_0(x_i,y_{\pi(i)})$.
Using $|\psi(x_j-x_i)-\psi(y_{\pi(j)}-y_{\pi(i)})|\le a_j+a_i$ and summing over $i\neq j$,
\[
|\,\bar G(\xi)-\bar G(\eta)\,|
\le \frac{1}{v(v-1)}\sum_{i\neq j}(a_i+a_j)
= \frac{2}{v}\sum_{i=1}^v a_i \;=\;2\,d_1(\xi,\eta).
\]
Since $\phi$ is $1$--Lipschitz and $C_0=\tfrac12$, we obtain $|H(\xi)-H(\eta)|\le d_1(\xi,\eta)$.\\
(ii) $|\xi|\neq|\eta|$.
By definition $d_1(\xi,\eta)=1$. Moreover $|\bar G(\xi)|\le 1$ for all $\xi$, and a single add/remove operation changes at most $O(v)$ summands while the $v(v-1)$ normalization keeps $|\bar G(\xi)-\bar G(\eta)|\le 2$. Therefore $|H(\xi)-H(\eta)|\le 1=d_1(\xi,\eta)$.

By Campbell’s formula and stationarity, with $h=y-x$,
\[
\E[\bar G(Y_W)]
=\lambda'^2 \int_{\|h\|\le 2r} \psi(h)\,g_{T_r}(h)\,|W\cap(W-h)|\,dh,
\]
\[
\E[\bar G(\Pi_W)]
=\lambda'^2 \int_{\|h\|\le 2r} \psi(h)\,|W\cap(W-h)|\,dh.
\]
Since $|\phi'|\le1$ and $A\ge1$, and $\E[1\vee |Y_W|]\asymp 1+\lambda'|W|$, we obtain (absorbing absolute constants into $c_d$)
\[
\big|\E[H(Y_W)]-\E[H(\Pi_W)]\big|
\ \ge\ \frac{\lambda'^2}{c_d\,|W|}\,
\left|\int_{\|h\|\le 2r} \psi(h)\,\big(g_{T_r}(h)-1\big)\,|W\cap(W-h)|\,dh\right|.
\]

Choose $\psi$ to be a smooth mollified sign of $g_{T_r}-1$ on $B(0,2r)$; then still
$\|\psi\|_\infty\le 1$, $\mathrm{Lip}(\psi)\le \kappa/r$, and
\[
\int_{\|h\|\le 2r} \psi(h)\,\big(g_{T_r}(h)-1\big)\,|W\cap(W-h)|\,dh
\ \ge\ \int_{\|h\|\le 2r}  |g_{T_r}(h)-1|\,|W\cap(W-h)|\,dh\ -\ c_d\,|W|\,r^d,
\]
where the last term is the standard mollification error (absorbed into constants).

Now we use the Lipschitz–window overlap bound
\[
|W\cap(W-h)|\ \ge\ |W|-\tfrac12\,\mathrm{Per}(W)\,\|h\|\qquad(\text{and }|W\triangle(W-h)|\le \mathrm{Per}(W)\,\|h\|),
\]
to get
\[
\int_{\|h\|\le 2r}  |g_{T_r}(h)-1|\,|W\cap(W-h)|\,dh
\ \ge\ |W| \int_{\|h\|\le 2r}  |g_{T_r}(h)-1|\,dh\ -\ c_d\,\mathrm{Per}(W)\,r,
\]
since $\int_{\|h\|\le 2r} |g_{T_r}(h)-1|\,\|h\|\,dh\ \lesssim\ r\,\int_{\|h\|\le 2r} |g_{T_r}(h)-1|\,dh\ +\ r^{d+1}$ and the latter is absorbed in the perimeter term. Combining the displays and absorbing absolute constants completes the proof:
\[
d_2 \big(\mathcal L(Y_W),\mathcal L(\Pi_W)\big)
\ \ge\
c_d\,\lambda'^2  \int_{\|h\|\le 2r}  |g_{T_r}(h)-1|\,dh
\ -\ C'_d\,\lambda'^2\,\mathrm{Per}(W)\,r.\qedhere
\]
\end{proof}

\begin{remark}[Interpretation and sharpness]
Theorem~\ref{thm:stein-main} shows that \emph{all} deviation from Poisson on a window is governed
by the short–range correlation of $Y$, integrated over distances $\le 2r$ where dependence may
exist. In particular, for the present thinning model $g_{T_r}(h)-1$ is compactly supported, hence
the integral is finite with no additional assumptions.
\end{remark}

\subsection{Small–radius vregime and explicit rate}
\label{subsec:stein-smallr}

We combine Theorem~\ref{thm:stein-main} with the contact–scale expansions of $g_{T_r}$ from
Lemma~\ref{lem:g-first-order} (case $p(0),p(1)>0$) and Lemma~\ref{lem:g-first-order-p0zero}
(case $p(0)=0<p(1)$). Recall $\mu=\lambda v_d r^d\downarrow 0$ and $g_{T_r}(h)=1$ for $\|h\|>2r$.

For notational convenience set
\[
S_{d-1}:=|\mathbb S^{d-1}|,\ 
B_d:=\int_0^1 t^{d-1}\,dt=\frac{1}{d},\ 
M_d^{(\le 1)}:=\int_0^1 t^{d-1}\,\omega_d(t)\,dt,\ 
M_d^{(>1)}:=\int_1^2 t^{d-1}\,\omega_d(t)\,dt,
\]
so $0\le M_d^{(\le 1)},M_d^{(>1)}\le \int_0^2 t^{d-1}\,dt=2^d/d$ since $\omega_d(\cdot)\le 1$.

\begin{theorem}[Window $d_2$ bound for small $r$]\label{prop:stein-smallr}
Let $W\subset\R^d$ be bounded and write $\mu=\lambda v_d r^d\downarrow 0$. Then:

\medskip
\noindent\emph{(a) Generic case $p(0)>0$ and $p(1)>0$.}
With
\[
\Xi_0(p):=\Big|\Big(\frac{p(1)}{p(0)}\Big)^{ 2}-1\Big|,\qquad
\Xi_1(p):=\Big(1-\frac{p(1)}{p(0)}\Big)^{ 2}+\Big(1-\frac{p(2)}{p(1)}\Big)^{ 2}+2\Big|\frac{p(2)}{p(1)}-\frac{p(1)}{p(0)}\Big|,
\]
we have
\begin{equation}\label{eq:smallr-d2-upper}
d_2  \bigl(\mathcal{L}(Y_W),\,\mathcal{L}(\Pi_W)\bigr)
\ \le\ \lambda'^2\,|W|\left[
C_d S_{d-1}\,r^d\Big\{ B_d\,\Xi_0(p)+\mu\,\Xi_1(p)\Big\}
\;+\; v_d(2r)^d
\right]\;+\;O \left(r^{2d}\right).
\end{equation}
 
\medskip
\noindent\emph{(b) Contact–favouring case $p(0)=0<p(1)$.}
Let $c:=p(1)\in(0,1]$ and $s:=p(2)/p(1)$. Then
\begin{equation}\label{eq:p0zero-d2}
d_2  \bigl(\mathcal{L}(Y_W),\,\mathcal{L}(\Pi_W)\bigr)
\ \le\ \lambda'^2\,|W|\left[
C_d S_{d-1}\,r^d\Big\{\frac{B_d}{\mu^{2}}+\frac{M_d^{(>1)}}{\mu}\Big\}
\;+\; v_d(2r)^d
\right]\;+\;O \left( r^d\right),
\end{equation}
and since $\lambda'=\lambda\,\E[p(N_r)]=\lambda(c\,\mu+O(\mu^2))$, the dominant term scales as
\[
d_2  \bigl(\mathcal{L}(Y_W),\,\mathcal{L}(\Pi_W)\bigr)
\ =\ O \big(|W|\,\lambda^2\,r^d\big)\qquad(r\downarrow 0).
\]
\end{theorem}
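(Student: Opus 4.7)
The plan is to feed the contact-scale expansions of the pair-correlation into the Stein upper bound of Theorem~\ref{thm:stein-main}. That theorem already reduces $d_2(\mathcal{L}(Y_W),\mathcal{L}(\Pi_W))$ to controlling the short-range $L^1$-deviation $I(r) := \int_{\|h\|\le 2r} |g_{T_r}(h)-1|\,dh$, plus the explicit geometric term $v_d(2r)^d$, both multiplied by $\lambda'^2|W|$. By radial symmetry and the substitution $s=tr$,
\[
I(r) \;=\; S_{d-1}\, r^d \int_0^2 t^{d-1}\,|g^\circ_{T_r}(tr)-1|\,dt,
\]
so the entire task is to estimate this $t$-integral on $(0,2]$, which naturally splits at $t=1$ into the mutual-inclusion regime ($I=1$) and the ring regime ($I=0$).

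For part (a), I will apply Lemma~\ref{lem:g-first-order}(ii) on $(0,1]$ and Lemma~\ref{lem:g-first-order}(i) on $(1,2]$. The triangle inequality decomposes $|g^\circ_{T_r}(tr)-1|$ into (i) a constant piece equal to $|(p(1)/p(0))^{2}-1|=\Xi_0(p)$, present only on $(0,1]$; (ii) a term linear in $\mu$ whose coefficients are assembled from $(1-p(1)/p(0))^2$, $(1-p(2)/p(1))^2$, and $|p(2)/p(1)-p(1)/p(0)|$, each weighted either by $\omega_d(t)$ or by the full indicator $\mathbf{1}_{(0,1]}$; and (iii) a uniform $O(\mu^2)$ remainder from the lemmas. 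Integrating $t^{d-1}$ against (i)--(ii) produces respectively $B_d\,\Xi_0(p)$ and a $d$-dependent linear combination of $B_d$, $M_d^{(\le 1)}$, $M_d^{(>1)}$ multiplied by the three pieces of $\Xi_1(p)$; the bounds $M_d^{(\le 1)},M_d^{(>1)}\le 2^d/d$ and the bounded factor $(p(1)/p(0))^2$ are then absorbed into $C_d$. Multiplying by $\lambda'^2|W|$ and adding the volume term recovers \eqref{eq:smallr-d2-upper}, with $O(r^{2d})$ capturing the $\mu^2$ remainder times $r^d$.

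For part (b), the intensity $\lambda'=\lambda(c\mu+O(\mu^2))$ vanishes with $r$, while $g^\circ_{T_r}$ now \emph{diverges}. Lemma~\ref{lem:g-first-order-p0zero} supplies $g^\circ_{T_r}(tr)=\mu^{-2}\{1+\mu[s+\omega_d(t)(1-s)^2]\}+O(1)$ on $(0,1]$ and $g^\circ_{T_r}(tr)=\omega_d(t)/\mu+O(1)$ on $(1,2]$. Both pointwise values dominate $1$, so subtracting $1$ only affects the $O(1)$ remainder after taking absolute values. Integrating $t^{d-1}$ yields exactly $B_d/\mu^2$ on $(0,1]$ and $M_d^{(>1)}/\mu$ on $(1,2]$, with an $O(1)$ remainder in $t$ that after multiplication by $S_{d-1}r^d$ gives the $O(r^d)$ error in \eqref{eq:p0zero-d2}. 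The final scaling claim $O(|W|\lambda^2 r^d)$ then follows from $\lambda'^2/\mu^2\sim c^2\lambda^2$: the dominant contribution is $\lambda'^2\cdot r^d/\mu^2\sim \lambda^2 r^d$, while $\lambda'^2\cdot r^d/\mu$ and $\lambda'^2\cdot(2r)^d$ are of orders $\lambda^2\mu r^d$ and $\lambda^2\mu^2 r^d$ respectively, both strictly subdominant.

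The main obstacle is the degenerate case $p(0)=0$: since both $\lambda'^2$ and $\int|g_{T_r}-1|$ behave singularly in $\mu$, a non-vacuous bound is only secured by a delicate cancellation of powers of $\mu$ between them. Any first-order imprecision in either expansion changes the leading rate, which is why the two-term expansion \eqref{eq:intensity-p0zero} of $\E[p(N_r)]$ has to be used rather than its leading term alone. A subtler issue, already present in part (a), is that $g^\circ_{T_r}(tr)$ does not converge to $1$ on the contact region $(0,r]$ as $r\downarrow 0$ unless $p(1)=p(0)$; its limit is $(p(1)/p(0))^2$. This forces the appearance of the $r^d$-scale term $B_d\,\Xi_0(p)$ in \eqref{eq:smallr-d2-upper}, which matches the Lebesgue volume of the contact ball and shows that mere regularity of $p$ cannot improve the rate beyond $r^d$.
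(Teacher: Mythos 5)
Your proposal is correct and follows essentially the same route as the paper: pass to polar coordinates for $\int_{\|h\|\le 2r}|g_{T_r}(h)-1|\,dh$, split at $t=1$, insert the expansions of Lemma~\ref{lem:g-first-order} (case (a)) and Lemma~\ref{lem:g-first-order-p0zero} (case (b)), integrate against $t^{d-1}$ to produce $B_d$, $M_d^{(\le 1)}$, $M_d^{(>1)}$, and then feed the result into Theorem~\ref{thm:stein-main}, tracking $\lambda'=\lambda(c\mu+O(\mu^2))$ for the final scaling in (b). The only cosmetic difference is that you make explicit the absorption of the prefactor $(p(1)/p(0))^2$ and the dominance argument on the ring, which the paper handles implicitly.
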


\begin{proof}
We use that $g_{T_r}(h)=1$ for $\|h\|>2r$ and change to polar coordinates $h=tr\,u$ with $t\in[0,2]$, $u\in\mathbb S^{d-1}$ to obtain
\begin{equation}\label{eq:polar}
\int_{\|h\|\le 2r}  |g_{T_r}(h)-1|\,dh
\;=\;S_{d-1}\,r^d\int_0^2 t^{d-1}\,|g_{T_r}(tr)-1|\,dt,
\end{equation}
exactly as used elsewhere in the paper. 

\medskip\noindent\textit{(a) Case $p(0)>0$ and $p(1)>0$.}
For $t\in(0,1]$ (contact scale) and $t\in(1,2],$  insert the first-order expansions from Lemma~\ref{lem:g-first-order} and take absolute values termwise. Writing $\omega_d(t)\in[0,1]$ for the normalized overlap, the expansion gives on $(0,1]$ a contribution of order
\[
\Big|\Big(\tfrac{p(1)}{p(0)}\Big)^2-1\Big| \;+\;
\mu\Big[\Big(1-\tfrac{p(2)}{p(1)}\Big)^2 \;+\; 2\Big|\tfrac{p(2)}{p(1)}-\tfrac{p(1)}{p(0)}\Big|\Big]
\;+\;O(\mu^2),
\]
and on $(1,2]$ a contribution of order
\[
\mu\,\Big(1-\tfrac{p(1)}{p(0)}\Big)^2 \;+\; O(\mu^2),
\]
each multiplied by the appropriate overlap weight \(\omega_d(t)\) when present. Integrating \eqref{eq:polar} in \(t\) with the weights \(t^{d-1}\) over \((0,1]\) and \((1,2]\) we get
\begin{align}
\int_{\|h\|\le 2r}  |g_{T_r}(h)-1|\,dh
&\le S_{d-1}\,r^d\Big\{ B_d\,\Xi_0(p)\ +\ \mu\Big[M_d^{(>1)} \Big(1-\tfrac{p(1)}{p(0)}\Big)^{ 2}\notag\\
&\hspace{6.7em}
+\,M_d^{(\le1)} \Big(1-\tfrac{p(2)}{p(1)}\Big)^{ 2}
+\,2B_d\Big|\tfrac{p(2)}{p(1)}-\tfrac{p(1)}{p(0)}\Big|\Big]\Big\} \ +\ O(\mu^2 r^d).\label{eq:int-a}
\end{align}

Now apply the Stein–Poisson window bound (Theorem~\ref{thm:stein-main}), which gives
\[
d_2 \bigl(\mathcal L(Y_W),\mathcal L(\Pi_W)\bigr)
\ \le\ \lambda'^2\,|W|\Big[C_d \int_{\|h\|\le2r} |g_{T_r}(h)-1|\,dh \;+\; v_d(2r)^d\Big],
\]
with \(C_d\) depending only on \(d\).
Combining with \eqref{eq:int-a} yields \eqref{eq:smallr-d2-upper}. If \(p(1)=p(0)\) then \(\Xi_0(p)=0\), while \(\lambda'=\lambda m_p=\lambda\{p(0)+O(\mu)\}\), so the leading term becomes \(O(|W|\,\lambda'^2\,\mu\,r^d)=O(|W|\,\lambda^3 r^{2d})\). 

\textit{(b) Case $p(0)=0<p(1)$.}
Let \(c=p(1)\in(0,1]\) and \(s=p(2)/p(1)\). Lemma~\ref{lem:g-first-order-p0zero} gives for \(t\in(1,2]\),
\[
g^\circ_{T_r}(tr)=\frac{\omega_d(t)}{\mu}+O(1),
\]
and for \(t\in(0,1]\),
\[
g^\circ_{T_r}(tr)=\mu^{-2}\big\{1+\mu\,[\,s+\omega_d(t)(1-s)^2\,]\big\}+O(1),
\]
uniformly in \(t\). Integrating these contributions (with weights \(t^{d-1}\)) over \((0,1]\) and \((1,2]\) yields
\begin{equation}\label{eq:p0zero-int}
\int_{\|h\|\le 2r}  |g_{T_r}(h)-1|\,dh
\ \le\ S_{d-1}\,r^d\left\{ \frac{B_d}{\mu^{2}} \;+\; \frac{M_d^{(>1)}}{\mu}\;+\; C(d,p)\right\},
\end{equation}
for a finite constant \(C(d,p)\) (depending only on \(d\) and \(p\) via \(c,s\)). 

Applying Theorem~\ref{thm:stein-main} again, we obtain \eqref{eq:p0zero-d2}. Finally, since \(m_p=\E[p(N_r)]=c\,\mu+O(\mu^2)\), we have \(\lambda'=\lambda m_p=\lambda(c\,\mu+O(\mu^2))\), so the leading order in \eqref{eq:p0zero-d2} scales as
\[
\lambda'^2\,|W|\,S_{d-1}\,r^d\Big(\frac{B_d}{\mu^2}\Big)\ \asymp\ |W|\,\lambda^2\,r^d,
\]
which is the claimed \(O( r^d)\) behaviour as \(r\downarrow 0\).  
\end{proof}

\begin{remark}[On the leading terms]
(i) In the generic case $p(0),p(1)>0$, the \emph{contact} contribution $B_d\,\Xi_0(p)$ is of order
$r^d$ and vanishes iff $p(1)=p(0)$. The next term is $O(\mu r^d)$ with a coefficient depending on
$p(0),p(1),p(2)$ and the geometric integrals $M_d^{(\le 1)},M_d^{(>1)}$. The Stein remainder
$v_d(2r)^d$ is also of order $r^d$ and is present regardless of $p$.

(ii) In the contact–favouring case $p(0)=0<p(1)$, the leading contribution to the integral of
$|g_{T_r}-1|$ is $\Theta(r^d/\mu^2)$ from the mutual–inclusion region. After multiplying by
$\lambda'^2\simeq \lambda^2 c^2\mu^2$, this produces the natural $O(|W|\,\lambda^2 r^d)$ rate.
\end{remark}

\subsection{Moderate radius regime}
\label{subsec:stein-moderate-r}

When $\mu=\lambda v_d r^d$ is not small, the contact–scale expansion used in
Theorem~\ref{prop:stein-smallr} no longer gives a sharp first–order description.
Nevertheless, Theorem~\ref{thm:stein-main} reduces Poisson approximation to controlling the
integrable short–range deviation $\int_{\|h\|\le 2r}|g_{T_r}(h)-1|\,dh$. Here we bound this
deviation \emph{non–asymptotically}, uniformly over $\mu>0$, in terms of a discrete Lipschitz
modulus of $p$ and a geometric overlap kernel. The argument uses
(i) the exact representation of $g_{T_r}$ in Theorem~\ref{thm:g-exact-PPP},
(ii) a conditional variance decomposition, and
(iii) a Poisson Poincar\'e   inequality on $\mathbb{Z}_+$
\cite[Thm.~18.7]{LastPenrose2017}, together with the $d_2$ Stein framework of
\cite{BarbourBrown1992,Schuhmacher2009Bernoulli}.

Define the discrete modulus
\[
\|\Delta p\|_{\infty}\ :=\ \sup_{k\ge 0}\,|p(k+1)-p(k)|,
\]
and put $m_p=\mathbb{E}[p(N)]$, $m_+=\mathbb{E}[p(N+1)]$ for $N\sim\mathrm{Poisson}(\mu)$.
Let $\omega_d(t)$ be the normalized overlap of two $r$–balls at relative distance
$t=\|h\|/r$.

\begin{lemma}[Covariance decomposition for $g_{T_r}$]\label{lem:cov-decomp}
For $t=\|h\|/r\in[0,2]$ let $I_t=\mathbf{1}\{t\le 1\}$, $U\sim\mathrm{Poisson}(\mu\,\omega_d(t))$,
$V\sim\mathrm{Poisson}(\mu(1-\omega_d(t)))$ independent, and set
\[
g_{I_t}(k):=\mathbb{E}\big[p(k+I_t+V)\big],\qquad k\in\mathbb{N}_0.
\]
Then
\[
g_{T_r}(h)\;=\;\frac{\mathbb{E}\big[g_{I_t}(U)^2\big]}{m_p^2}
\;=\; \frac{\mathrm{Var}(g_{I_t}(U))}{m_p^2}
\;+\;\Big(\frac{\mathbb{E}[g_{I_t}(U)]}{m_p}\Big)^{ 2}-1,
\]
with $\mathbb{E}[g_{I_t}(U)]=m_p$ for $t>1$ and $\mathbb{E}[g_{I_t}(U)]=m_+$ for $t\le 1$.
\end{lemma}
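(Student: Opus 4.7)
The plan is to reduce the double-$V$ expectation appearing in Theorem~\ref{thm:g-exact-PPP} to a single-variable second moment by conditioning on the shared overlap count $U$, and then to identify the one remaining scalar mean by collapsing $U+V$ back into a Poisson$(\mu)$ variable.

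First I would start from the exact formula $g_{T_r}(h)=\mathbb{E}[p(U+V_1+I_t)\,p(U+V_2+I_t)]/m_p^{2}$ of Theorem~\ref{thm:g-exact-PPP}. The triple $(U,V_1,V_2)$ is mutually independent by Poisson splitting on the pairwise disjoint regions $B(x,r)\cap B(y,r)$, $B(x,r)\setminus B(y,r)$, and $B(y,r)\setminus B(x,r)$; in particular, conditionally on $U$ the variables $V_1,V_2$ remain i.i.d.\ $\mathrm{Poisson}(\mu(1-\omega_d(t)))$. Letting $V$ denote a generic copy of $V_1$, conditional independence gives
\[
\mathbb{E}\bigl[p(U+V_1+I_t)\,p(U+V_2+I_t)\,\big|\,U\bigr]
=\bigl(\mathbb{E}[p(U+V+I_t)\mid U]\bigr)^{2}
= g_{I_t}(U)^{2}.
\]
Taking unconditional expectations and dividing by $m_p^{2}$ yields the first equality $g_{T_r}(h)=\mathbb{E}[g_{I_t}(U)^2]/m_p^{2}$.

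Next I would derive the variance/squared-mean decomposition by applying the identity $\mathbb{E}[X^{2}]=\mathrm{Var}(X)+(\mathbb{E}X)^{2}$ with $X=g_{I_t}(U)$ and then pulling out the factor $m_p^{-2}$; this is purely algebraic. To identify $\mathbb{E}[g_{I_t}(U)]$ I would use the tower property together with independence of $U$ and $V$: since $U+V$ is the sum of two independent Poisson variables with parameters $\mu\omega_d(t)$ and $\mu(1-\omega_d(t))$, it is distributed as $N\sim\mathrm{Poisson}(\mu)$. Hence for $t>1$, where $I_t=0$, we get $\mathbb{E}[g_{I_t}(U)]=\mathbb{E}[p(N)]=m_p$; for $t\le 1$, where $I_t=1$, we get $\mathbb{E}[g_{I_t}(U)]=\mathbb{E}[p(N+1)]=m_+$.

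There is no real obstacle in this argument: the only point that needs to be invoked carefully is the joint independence of $U$, $V_1$, and $V_2$ (i.e.\ the disjoint-region Poisson splitting), which is already established in the proof of Theorem~\ref{thm:g-exact-PPP}. Everything else is conditional expectation and the elementary closure of Poisson laws under sums of independent variables; no Palm calculus, no re-entry into the marking construction, and no estimates are needed.
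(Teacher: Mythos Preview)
Your argument is correct and matches the paper's proof essentially step for step: condition on $U$ in the exact formula of Theorem~\ref{thm:g-exact-PPP}, use conditional i.i.d.\ of $V_1,V_2$ to collapse the product into $g_{I_t}(U)^2$, then apply the variance/mean-square split and identify $\mathbb{E}[g_{I_t}(U)]$ via $U+V\sim\mathrm{Poisson}(\mu)$. The paper's version is just a one-sentence compression of what you wrote.
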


\begin{proof}
Condition on $U$ and use Theorem~\ref{thm:g-exact-PPP}: given $U$, the two arguments are
$U+I_t+V_1$ and $U+I_t+V_2$ with $V_1,V_2\stackrel{\text{i.i.d.}}{\sim}\mathrm{Poisson}(\mu(1-\omega_d(t)))$,
hence $\E[\cdot\mid U]=g_{I_t}(U)^2$. Average over $U$ and decompose into variance and mean–square.
Since $U+V\sim\mathrm{Poisson}(\mu)$, $\E[g_{I_t}(U)]=\E[p(N+I_t)]$, giving the two cases.
\end{proof}

\begin{lemma}[Poisson Poincar\'e inequality for $g_{I_t}(U)$]\label{lem:poisson-poincare}
If $U\sim\mathrm{Poisson}(\theta)$ with $\theta=\mu\,\omega_d(t)$, then
\[
\mathrm{Var}\big(g_{I_t}(U)\big)\ \le\ \theta\,\|\Delta g_{I_t}\|_\infty^2
\ \le\ \theta\,\|\Delta p\|_\infty^2.
\]
\end{lemma}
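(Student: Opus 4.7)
The plan is to invoke the Poisson Poincaré inequality for functionals on $\mathbb{N}_0$ and then show that taking the averaged function $g_{I_t}$ does not increase the discrete Lipschitz modulus.

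First I would recall the one--dimensional Poisson Poincaré inequality (which is the scalar specialization of \cite[Thm.~18.7]{LastPenrose2017}, or equivalently Chen's classical bound): for any $f:\mathbb{N}_0\to\mathbb{R}$ with $f(U)\in L^2$ when $U\sim\mathrm{Poisson}(\theta)$,
\[
\mathrm{Var}\bigl(f(U)\bigr)\ \le\ \theta\,\mathbb{E}\bigl[(Df)(U)^2\bigr],\qquad (Df)(k):=f(k+1)-f(k).
\]
Applied to $f=g_{I_t}$ and bounding the second moment of the difference by its sup,
\[
\mathrm{Var}\bigl(g_{I_t}(U)\bigr)\ \le\ \theta\,\mathbb{E}\bigl[(Dg_{I_t})(U)^2\bigr]\ \le\ \theta\,\|\Delta g_{I_t}\|_\infty^{\,2},
\]
which is the first inequality in the statement.

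Next I would prove the comparison $\|\Delta g_{I_t}\|_\infty\le \|\Delta p\|_\infty$ by an averaging argument. By definition of $g_{I_t}$,
\[
g_{I_t}(k+1)-g_{I_t}(k)\ =\ \mathbb{E}\bigl[\,p(k+1+I_t+V)-p(k+I_t+V)\,\bigr],
\]
where $V\sim\mathrm{Poisson}(\mu(1-\omega_d(t)))$. Taking absolute values inside the expectation and using $|p(m+1)-p(m)|\le \|\Delta p\|_\infty$ uniformly in $m\ge 0$ gives $|g_{I_t}(k+1)-g_{I_t}(k)|\le \|\Delta p\|_\infty$ for every $k\in\mathbb{N}_0$, hence $\|\Delta g_{I_t}\|_\infty\le \|\Delta p\|_\infty$. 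Chaining the two bounds yields the second inequality.

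There is no real obstacle here: the only mild care is to invoke the Poincaré inequality in its scalar (Poisson--on--$\mathbb{N}_0$) form rather than the general point--process version, and to note that $\|\Delta p\|_\infty<\infty$ is not needed for the first inequality to hold (if it were infinite, the second inequality would be vacuous). The averaging step requires no regularity beyond measurability of $p$, since $p$ is bounded in $[0,1]$ so all expectations are finite.
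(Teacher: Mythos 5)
Your proof is correct and follows essentially the same route as the paper: apply the Poisson Poincar\'e inequality on $\mathbb{N}_0$ to $f=g_{I_t}$, bound the squared difference by its sup norm, and use $\Delta g_{I_t}(k)=\mathbb{E}[\Delta p(k+I_t+V)]$ to conclude $\|\Delta g_{I_t}\|_\infty\le\|\Delta p\|_\infty$. The paper's argument is identical, so no further comments are needed.
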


\begin{proof}
By the Poisson Poincar\'e   inequality on $\mathbb{Z}_+$,
$\mathrm{Var}(f(U))\le \theta\,\E[(\Delta f(U))^2]$ for $U\sim\mathrm{Poisson}(\theta)$
\citep[Thm.~18.7]{LastPenrose2017}. Bound the expectation by the sup–norm and note
$\Delta g_{I_t}(k)=\E[\Delta p(k+I_t+V)]$, so $\|\Delta g_{I_t}\|_\infty\le \|\Delta p\|_\infty$.
\end{proof}

\begin{lemma}[Pointwise upper bound on $g_{T_r}-1$]\label{lem:pointwise-gminus1}
For all $h$ with $t=\|h\|/r\in[0,2]$,
\[
\big|g_{T_r}(h)-1\big|
\ \le\ \frac{\mu\,\omega_d(t)}{m_p^2}\,\|\Delta p\|_\infty^2
\;+\;\mathbf{1}\{t\le 1\}\,\Big|\frac{m_+^2}{m_p^2}-1\Big|.
\]
\end{lemma}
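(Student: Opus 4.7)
The plan is to piece together the previous two lemmas by writing $g_{T_r}(h)-1$ in the form provided by Lemma~\ref{lem:cov-decomp}, applying the triangle inequality to the two resulting summands, and then bounding each summand separately. Concretely, starting from
\[
g_{T_r}(h)-1
\;=\;\frac{\mathrm{Var}(g_{I_t}(U))}{m_p^2}
\;+\;\Bigl(\tfrac{\E[g_{I_t}(U)]}{m_p}\Bigr)^{2}-1,
\]
a single triangle inequality separates a non-negative variance contribution from a purely mean-based contribution:
\[
|g_{T_r}(h)-1|
\;\le\;\frac{\mathrm{Var}(g_{I_t}(U))}{m_p^2}
\;+\;\Bigl|\Bigl(\tfrac{\E[g_{I_t}(U)]}{m_p}\Bigr)^{2}-1\Bigr|.
\]

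For the variance term, I would invoke Lemma~\ref{lem:poisson-poincare} directly: since $U\sim\mathrm{Poisson}(\theta)$ with $\theta=\mu\,\omega_d(t)$, the Poisson Poincar\'e inequality gives $\mathrm{Var}(g_{I_t}(U))\le\theta\,\|\Delta g_{I_t}\|_\infty^{2}\le\mu\,\omega_d(t)\,\|\Delta p\|_\infty^{2}$. Dividing by $m_p^2$ produces exactly the first summand on the right-hand side of the claim, and this part of the argument is completely mechanical.

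For the mean term, I would use the case distinction built into the statement of Lemma~\ref{lem:cov-decomp}: when $t>1$ the indicator $I_t$ vanishes, so $\E[g_{I_t}(U)]=\E[p(N)]=m_p$ and the ratio equals one, making the contribution identically zero; when $t\le 1$ one has $\E[g_{I_t}(U)]=\E[p(N+1)]=m_+$, and the term reduces to $|m_+^2/m_p^2-1|$. Multiplying by the indicator $\mathbf{1}\{t\le 1\}$ to unify both regimes gives the second summand in the claim. Summing the two pieces yields the stated inequality.

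The only substantive obstacle is a bookkeeping one, namely making sure the case analysis $t>1$ vs.\ $t\le 1$ from Lemma~\ref{lem:cov-decomp} is applied consistently (in particular, that the identity $U+V\stackrel{d}{=}\mathrm{Poisson}(\mu)$ is used to rewrite $\E[g_{I_t}(U)]$ as $\E[p(N+I_t)]$). Once that is done, no calculation beyond the triangle inequality and Lemma~\ref{lem:poisson-poincare} is needed; there is no additional analytic input required.
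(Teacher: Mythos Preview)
Your proposal is correct and follows exactly the same route as the paper: combine the decomposition of Lemma~\ref{lem:cov-decomp} with the variance bound of Lemma~\ref{lem:poisson-poincare}, and use the case distinction $t>1$ versus $t\le 1$ to identify the mean-shift term as $0$ or $m_+^2/m_p^2-1$. The paper's proof is simply a one-sentence compression of precisely the argument you have written out.
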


\begin{proof}
Combine Lemmas \ref{lem:cov-decomp} and \ref{lem:poisson-poincare}; for $t\le 1$ the mean–shift term
equals $m_+^2/m_p^2-1$, and it vanishes for $t>1$.
\end{proof}

\begin{theorem}[Moderate-$r$ window bound under Lipschitz $p$]\label{thm:moderate-r}
Let $W\subset\R^d$ be bounded. Define
\[
S_{d-1}:=|\mathbb S^{d-1}|,\qquad
J_d:=\int_0^2 t^{d-1}\,\omega_d(t)\,dt\ \le\ \frac{2^d}{d},\qquad
B_d:=\int_0^1 t^{d-1}\,dt=\frac{1}{d}.
\]
Then
\begin{equation}\label{eq:moderate-int}
\int_{\|h\|\le 2r}  |g_{T_r}(h)-1|\,dh
\ \le\ S_{d-1}\,r^d\left\{
\frac{\mu\,\|\Delta p\|_\infty^2}{m_p^2}\,J_d
\;+\;\Big|\frac{m_+^2}{m_p^2}-1\Big|\,B_d
\right\}
\end{equation}
and,
\begin{align}
d_2  \left(\mathcal{L}(Y_W),\,\mathcal{L}(\Pi_W)\right)
&\le\ \lambda'^2\,|W|\Bigg[
C_d\,S_{d-1}\,r^d\left\{
\frac{\mu\,\|\Delta p\|_\infty^2}{m_p^2}\,J_d
\;+\;\Big|\frac{m_+^2}{m_p^2}-1\Big|\,B_d
\right\}
\ +\ v_d(2r)^d
\Bigg]. \label{eq:moderate-d2}
\end{align}
In particular, using $\lambda'=\lambda m_p$ and $\mu=\lambda v_d r^d$,
\[
d_2 \left(\mathcal{L}(Y_W),\,\mathcal{L}(\Pi_W)\right)
\ \lesssim\ |W|\left[
\lambda^3 r^{2d}\ \|\Delta p\|_\infty^2
\;+\;
\lambda^2 r^{d}\ m_p \left(2\frac{\|\Delta p\|_\infty}{m_p}+\frac{\|\Delta p\|_\infty^2}{m_p^2}\right)
\;+\;
\lambda'^2 r^d
\right].
\]
\end{theorem}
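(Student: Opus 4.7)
The plan is to deduce \eqref{eq:moderate-int} by integrating the pointwise bound of Lemma~\ref{lem:pointwise-gminus1} over $\{\|h\|\le 2r\}$, then feed the result into the Stein bound of Theorem~\ref{thm:stein-main} to obtain \eqref{eq:moderate-d2}. The final asymptotic display will follow from elementary algebra on the Poisson means $m_p$ and $m_+$.

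First I would pass to polar coordinates $h=tr\,u$ with $t\in[0,2]$ and $u\in\mathbb{S}^{d-1}$, so that $dh=r^d\,t^{d-1}\,dt\,d\sigma(u)$ and
$$
\int_{\|h\|\le 2r}|g_{T_r}(h)-1|\,dh
\;=\; S_{d-1}\,r^d\int_0^2 t^{d-1}\,|g_{T_r}(tr)-1|\,dt.
$$
Inserting the pointwise estimate from Lemma~\ref{lem:pointwise-gminus1} splits the radial integrand into two pieces: the variance-type term $\mu\,\omega_d(t)\,\|\Delta p\|_\infty^2/m_p^2$, whose integral against $t^{d-1}$ over $[0,2]$ equals $J_d$ by definition, and the mean-shift term $\mathbf{1}\{t\le 1\}\,|m_+^2/m_p^2-1|$, whose integral against $t^{d-1}$ over $[0,1]$ equals $B_d$. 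Reassembling these two contributions produces \eqref{eq:moderate-int} verbatim.

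Next, I would apply Theorem~\ref{thm:stein-main} to $Y_W$ and substitute the bound just obtained on $\int_{\|h\|\le 2r}|g_{T_r}(h)-1|\,dh$ into its right-hand side; the Stein-boundary contribution $v_d(2r)^d$ transfers through unchanged, yielding \eqref{eq:moderate-d2}. For the concluding asymptotic display, I would set $\mu=\lambda v_d r^d$ and $\lambda'=\lambda m_p$, and rewrite the ratio $|m_+^2/m_p^2-1|$ via the factorisation $m_+^2-m_p^2=(m_+-m_p)(m_++m_p)$. The key elementary estimate here is $|m_+-m_p|=|\mathbb{E}[p(N+1)-p(N)]|\le \|\Delta p\|_\infty$, combined with $m_++m_p\le 2m_p+\|\Delta p\|_\infty$; this gives $|m_+^2/m_p^2-1|\le 2\|\Delta p\|_\infty/m_p+\|\Delta p\|_\infty^2/m_p^2$. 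Multiplying by $\lambda'^2=\lambda^2 m_p^2$ produces the middle bracket in the final display, the first bracket arises from $\lambda'^2\mu\|\Delta p\|_\infty^2/m_p^2=v_d\lambda^3 r^{2d}\|\Delta p\|_\infty^2$, and the third bracket is the Stein remainder $\lambda'^2 v_d(2r)^d$. Absorbing the purely geometric constants $S_{d-1},J_d,B_d,v_d,2^d,C_d$ into the implicit $\lesssim$ closes the derivation.

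The heavy lifting is really already done in Lemmas~\ref{lem:cov-decomp}--\ref{lem:pointwise-gminus1} (covariance decomposition plus the Poisson Poincar\'e inequality) and in Theorem~\ref{thm:stein-main}, so the remaining work is essentially careful bookkeeping. The only mildly delicate point is to keep the overlap function $\omega_d(t)$ integrated over the full range $[0,2]$ rather than $[0,1]$, since the finite-range dependence of $g_{T_r}$ extends out to $\|h\|=2r$ and not $r$; together with the cleanly separated indicator $\mathbf{1}\{t\le 1\}$ in the mean-shift term, this is what produces the two distinct geometric constants $J_d$ and $B_d$ in the stated bound.
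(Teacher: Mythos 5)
Your proposal is correct and matches the paper's own proof essentially step for step: polar coordinates applied to the pointwise bound of Lemma~\ref{lem:pointwise-gminus1} to get \eqref{eq:moderate-int}, substitution into Theorem~\ref{thm:stein-main} for \eqref{eq:moderate-d2}, and the same factorisation $m_+^2-m_p^2=(m_+-m_p)(m_++m_p)$ with $|m_+-m_p|=|\E[\Delta p(N)]|\le\|\Delta p\|_\infty$ for the final display. No gaps.
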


\begin{proof}
By polar coordinates $h=tr\,u$, $t\in[0,2]$, $u\in\mathbb S^{d-1}$,
\[
\int_{\|h\|\le 2r}  |g_{T_r}(h)-1|\,dh
=\ S_{d-1} r^d  \int_0^2  t^{d-1} \left[\frac{\mu\,\omega_d(t)}{m_p^2}\,\|\Delta p\|_\infty^2
+\mathbf{1}\{t\le1\}\Big|\frac{m_+^2}{m_p^2}-1\Big|\right]dt,
\]
which gives \eqref{eq:moderate-int} with the stated $J_d,B_d$ (and $J_d\le 2^d/d$ since $\omega_d\le 1$).
Insert \eqref{eq:moderate-int} into the Stein upper bound of Theorem~\ref{thm:stein-main}
to obtain \eqref{eq:moderate-d2}.
Finally, $|m_+^2/m_p^2-1|=\frac{|(m_+-m_p)(m_++m_p)|}{m_p^2}
\le \frac{2|m_+-m_p|}{m_p}+\frac{(m_+-m_p)^2}{m_p^2}
\le 2\|\Delta p\|_\infty/m_p+\|\Delta p\|_\infty^2/m_p^2$, since
$m_+-m_p=\E[\Delta p(N)]$ and $|\Delta p|\le\|\Delta p\|_\infty$.
\end{proof}

\subsection{Comparison of bounds and “best–case’’ retention rules}
\label{subsec:compare-bounds}

We summarize how the three approximation routes behave and give concrete $p$’s for which
each route is (asymptotically) the most informative in its natural regime/metric.  Throughout
$\mu=\lambda v_d r^d$, $\lambda'=\lambda m_p$, and $W\subset\R^d$ is bounded.

\noindent\textbf{  Direct coupling in total variation (Thm.~\ref{thm:coupling-TV}).}
\begin{itemize}
\item \emph{Form of the bound:} 
\[
d_{\mathrm{TV}}\big(\mathcal{L}(T_r(X) \restriction W),\mathcal{L}(\mathrm{PPP}(\lambda m_p) \restriction W)\big)
\le \lambda |W|\,\delta_1,\ \ \delta_1=\E|p(N)-m_p|.
\]
When $p$ changes slowly in $n$, $\delta_1$ is small; for $L:=\sup_n|p(n{+}1)-p(n)|$ and small $\mu$,
$\delta_1\le 2L\mu$ (Cor.~\ref{cor:small-mu-TV}).

\item \emph{Best–case $p$.}  A rule that is flat on the range of likely counts.
For instance,
\[
p(n)\equiv c\ \text{for }n\in\{0,1,2\},\qquad p(3)\in[0,1]\ \text{ arbitrary}.
\]
Then $\delta_1=\Theta(\mathbb P\{N\ge 3\})=O(\mu^3)$, so
\(
d_{\mathrm{TV}}\lesssim |W|\,\lambda^4 r^{3d}.
\)
By contrast, any $d_2$–Stein bound must still pay the universal $v_d(2r)^d$ remainder (Theorem~\ref{thm:stein-main}),
and the small–$r$ Stein expansion typically yields an $O(\lambda^4 r^{3d})$ term as well but in the \emph{weaker}
$d_2$ metric and with additional geometric constants. Hence, for TV distance and extremely flat $p$, the coupling
route is the sharpest and conceptually simplest.
\end{itemize}

\noindent\textbf{ Stein bound via short–range correlation.}
\begin{itemize}
\item \emph{Form of the bound:}
\[
d_2 \left(\mathcal L(Y_W),\mathcal L(\Pi_W)\right)
\le \lambda'^2|W| \left[C_d \int_{\|h\|\le 2r} |g_{T_r}(h)-1|\,dh+v_d(2r)^d\right].
\]
In the small–$r$ regime this becomes (Prop.~\ref{prop:stein-smallr}(a))
\[
d_2\ \lesssim\ |W|\,\lambda'^2\,r^d\Big\{B_d\,\big|\big(p(1)/p(0)\big)^2-1\big|
+\mu\cdot(\text{terms in }p(0),p(1),p(2))\Big\}.
\]

\item \emph{Best–case $p.$  }  A rule that is \emph{flat at contact scale}:
\[
p(0)=p(1)\quad(\text{but $p(2)$ may differ}).
\]
A canonical example is the Matérn–type rule
\[
p(0)=p(1)=1,\qquad p(n)=0\ \text{ for }n\ge 2.
\]
Here the $O(r^d)$ “contact’’ contribution cancels and the Stein bound improves to
\(
d_2=O \big(|W|\,\lambda^3 r^{2d}\big),
\)
which is an extra factor $\mu$ smaller than the $O(|W|\,\lambda^2 r^d)$ scale delivered by the coupling
route for small $\mu$. Thus the small–$r$ Stein analysis is \emph{strictly sharper in $r$} when $p(1)=p(0)$.

\item  
Take  
\(
p(0)=p(1)=c,\ p(2)=c-\delta
\)
with fixed $|\delta|\ll 1$.  Then
\(
d_2=O \big(|W|\,\lambda^3 r^{2d}\,\delta^2\big),
\)
whereas the coupling TV bound scales as
\(
O \big(|W|\,\lambda^3 r^{2d}\,|\delta|\big)
\)
(up to constants).  For small $|\delta|$ the Stein bound is better (quadratic vs.\ linear in $|\delta|$).
\end{itemize}
 
\noindent\textbf{ Stein bound for moderate $r$ under Lipschitz $p$ (Thm.~\ref{thm:moderate-r}).}
\begin{itemize}
\item \emph{Form of the bound:}
\[
d_2\ \lesssim\ |W|\left[\lambda^3 r^{2d}\,\|\Delta p\|_\infty^2
+\lambda^2 r^d\,m_p \left(2\frac{\|\Delta p\|_\infty}{m_p}+\frac{\|\Delta p\|_\infty^2}{m_p^2}\right)
+\lambda'^2 r^d\right],
\]
uniformly in $\mu$.

\item \emph{Best–case $p$ (moderate–$r$ Stein wins).}  A very smooth (discrete–Lipschitz) rule when
$\mu$ is \emph{not} small.  For example, the logistic rule
\[
p(n)=\frac{1}{1+\exp\{\beta(n-n_0)\}},\qquad 0<\beta\ll 1,
\]
satisfies $\|\Delta p\|_\infty\le \beta/4$.  For $\mu\asymp 1$ (or larger), the coupling bound
does not offer a small parameter, while the moderate–$r$ Stein bound above decays linearly/quadratically
in $\beta$.  Hence this theorem is the only one that yields a \emph{uniform, $\mu$–insensitive} control
leveraging the small slope of $p$.
\end{itemize}

\noindent\textbf{ Laplace–functional control.}
\begin{itemize}
\item \emph{Form of the bound:}
\[
\big|\log \E[\exp(-\sum_{x\in T_r(X)}g(x))]+\int_W (1-e^{-g})\,\lambda m_p\,dx\big|
\ \le\ C_d\,\lambda^2\,\|g\|_\infty\,|W|\,(2r)^d.
\]
It is \emph{independent} of any smoothness of $p$ and holds for all $\mu$.

\item \emph{Best–case $p$ .}
Take a highly oscillatory rule where $\|\Delta p\|_\infty=1$ and $\mu$ is moderate, e.g.
\[
p(n)=\mathbf{1}_{\{n\ \text{even}\}}.
\]
Then the moderate–$r$ Stein bound is large (no small slope), and the coupling TV bound is of order
$\lambda|W|$ (since $\delta_1\approx 1/2$ for $\mu\gtrsim 1$), whereas the Laplace–functional error
remains $O(\lambda^2 r^d)$ for any bounded test $g$.  Thus, for generating–functional approximations,
the Laplace route is the only robust tool for rough $p$.
\end{itemize}

Each route controls a different mechanism:
\begin{itemize}
\item \emph{Coupling/TV} excels when $p$ is (nearly) constant on the likely count range (very small $\mu$ or high flatness).
\item \emph{Stein small–$r$} captures delicate \emph{contact–scale cancellations} (notably $p(1)=p(0)$), giving an extra factor $\mu$.
\item \emph{Stein moderate–$r$} is the only method that turns \emph{global smoothness} of $p$ (small $\|\Delta p\|_\infty$) into a uniform bound for $\mu\not\ll 1$.
\item \emph{Laplace–functional} stays effective for rough $p$ and any $\mu$ when the goal is closeness of generating functionals rather than a global metric.
\end{itemize}

The lower bound in Theorem~\ref{prop:d2-lower} shows that, up to boundary effects, any $d_2$ upper
bound must scale with $\int_{\|h\|\le 2r}|g_{T_r}(h)-1|\,dh$; hence the short–range–correlation
Stein bounds are essentially sharp in their $r$–dependence.  The examples above illustrate that
no single method dominates across all regimes and retention rules $p$.



 \bibliographystyle{plain} 
 \bibliography{references}

\end{document}